\newtheorem{theorem}{Theorem}
\newtheorem{remark}{Remark}
\newtheorem{proposition}{Proposition}
\newtheorem{lemma}{Lemma}
\newtheorem{corollary}{Corollary}
\newtheorem*{claim*}{Claim}
\def\H#1{{\bf #1}}
\def\ie{{\em i.e.,~}}
\newfont\bbf{msbm10 at 12pt}
\def\c{{\bf C}}
\def\eps{\varepsilon}
\def\phi{\varphi}
\def\R{{\mathbb R}}
\def\N{{\mathbb N}}
\def\E{{\mathcal E}}
\def\L{{\mathcal L}}
\def\H{{\mathcal H}}
\def\G{{\mathcal G}}
\def\P{{\mathcal P}}
\def\D{{\mathcal D}}
\def\M{{\mathcal M}}
\def\lev{\mbox{level}}
\def\plev{\mbox{pre-level}}
\def\slev{\mbox{\tiny level}}
\def\dm{\mathfrak D}
\def\tag#1{\hfill \qquad  #1}
\def\supp{\mbox{\rm supp}}
\def\orb{\mbox{\rm orb}}
\def\Crit{\mbox{\rm Crit}}
\def\smCrit{\mbox{\tiny Crit}}
\def\bd{\partial }
\def\le{\leqslant}
\def\ge{\geqslant}
\def\htop{h_{top}}
\def\laps{\mbox{laps}}
\def\rome{{\mathcal R}}
\def\smrome{{\scriptsize\mathcal R}}
\def\smromea{\mbox{\it \scriptsize rome}}
\newcommand{\st}{such that }
\newcommand{\cyc}{\mbox{cyc}}
\begin{document}
\bibliographystyle{plain}
\title[Equilibrium states for potentials with $\sup \phi - \inf \phi < \htop(f)$]
{Equilibrium states for interval maps:
potentials with \boldmath $\sup \phi - \inf \phi < \htop(f)$ \unboldmath}
\author{Henk Bruin, Mike Todd}
\thanks{
This research was supported by EPSRC grant GR/S91147/01.  MT was partially supported by  FCT grant SFRH/BPD/26521/2006 and CMUP}
\subjclass[2000]{37D35, 37E05, 37D25}
\keywords{Equilibrium states, thermodynamic formalism, interval maps, non-uniform hyperbolicity}
\maketitle

\begin{abstract}
We study an inducing scheme approach for smooth
interval maps to prove existence and uniqueness
of equilibrium states for potentials $\phi$ with the `bounded range'
condition $\sup \phi - \inf \phi < \htop(f)$, first used by
Hofbauer and Keller  \cite{HKeq}.
We compare our results to Hofbauer and Keller's use of  Perron-Frobenius operators.  We demonstrate that this `bounded range' condition
on the potential is important even if the potential is H\"older continuous.
We also prove analyticity of the pressure in this context.
\end{abstract}

\section{Introduction}\label{sec:intro}
Thermodynamic formalism is concerned with
existence and uniqueness of measures $\mu_\phi$ that
maximise the {\em free energy}, \ie the sum of the entropy and
the integral over the potential.  In other words
\[
h_{\mu_\phi}(f) + \int_X \phi~d\mu_\phi  = P(\phi) := \sup_{\nu
\in \M_{erg}} \left\{  h_{\nu}(f) + \int_X \phi \
d\nu:-\int_X\phi~d\nu<\infty \right\}
\]
where $\M_{erg}$ is the set of all ergodic $f$-invariant Borel
probability measures. Such measures are called {\em equilibrium
states}, and $P(\phi)$ is the {\em pressure}. This theory
was developed by Sinai, Ruelle and Bowen \cite{Sinai, Ruelle,  Bowen} in
the context of H\"older potentials on hyperbolic dynamical
systems, and has been applied to Axiom A systems, Anosov
diffeomorphisms and other systems too, see e.g.\
\cite{Baladi,Kbook} for more recent expositions.

In this paper we are interested in smooth interval maps
$f:I \to I$ with a finite number of critical points.
More precisely, $\H$ will be the collection of \emph{topologically
mixing} (\ie for each $n \ge 1$, $f^n$ has a dense orbit)
$C^2$  maps on the interval (or circle) such that all its periodic
points are hyperbolically repelling and all its critical points are non-flat.
The existence of critical points prevents such maps from being
uniformly hyperbolic for the `natural' potential $\phi = -\log|Df|$.

Inducing schemes where used in \cite{PS} to regain hyperbolicity and
prove the existences of equilibrium states for $-t \log|Df|$
for a large interval of $t$, but very specific Collet-Eckmann unimodal maps $f$.
In \cite{BTeqnat} we investigated $-t \log|Df|$ with $t$ close to $1$
for multimodal maps
whose derivatives critical orbits satisfy only polynomial growth.
Combining inducing schemes
with ideas of so-called Hofbauer towers and infinite state Markov chains
(as presented by Sarig \cite{Sathesis,Saphase,SaBIP}),
we proved the existence and uniqueness of equilibrium states
within the class
\[
\M_+ = \left\{ \mu \in \M_{erg} : 
\lambda(\mu) > 0, \supp(\mu) \not\subset
\orb(\Crit) \right\}.
\]
where $\lambda(\mu) = \int \log |Df| d\mu$ is the Lyapunov exponent
of $\mu$.
In fact the assumptions that we make on the potentials in this paper ensure that any equilibrium state must lie in this class, and hence it is no restriction to only consider measures there.

\begin{remark}\label{rem:existence}
Note that the function $\mu \mapsto h_\mu(f)$ is upper semicontinuous,
cf.\ \cite[Lemma 2.3]{BrKel}.  Hence, if the  potential is upper semicontinuous, then the free energy
map $\mu \mapsto h_\mu(f) + \int \phi \ d\mu$ is upper
semicontinuous too.  As $\M_{erg}$ is compact in the weak topology, this
gives the existence of equilibrium states, but not uniqueness.
\end{remark}

In this work we want to use inducing schemes to prove
existence and uniqueness of equilibrium states for
``general'' potentials. In this area, there are many results,
in particular several papers by Hofbauer and Keller
\cite{Hnonuni,Hpwise,HKeq} from the late 1970s. These results
were inspired by Bowen's exposition \cite{Bowen}
for hyperbolic dynamical systems,
and investigate what happens when hyperbolicity fails.
Their main tool was the Perron-Frobenius operator, which
even for non-uniformly expanding interval maps continues to have a quasi-compact structure for many potentials.
In this paper we focus on what can be proved for these problems using inducing techniques.  We then apply Sarig's theory of countable Markov shifts.  (A related application of that theory for multidimensional piecewise expanding maps can be found in \cite{BuSar}.)  In \cite{HKeq} two main sets of results are given,
based on different regularity conditions for the potential;
we will present them briefly in Sections~\ref{subsec:BV} and
\ref{subsec:var}.  At the same time we set out some definitions which will be used throughout the paper. In Section~\ref{subsec:main} we present our main results.

\subsection{Potentials in \boldmath $BV$ \unboldmath}
\label{subsec:BV}

Given a function $\phi:I \to \R$, we define the
semi-norm $\|\cdot\|_{BV}$ as
$$
\|\phi\|_{BV}:=\sup_{N \in \N}\
\sup_{0=a_0<\cdots<a_N=1}\sum_{k=0}^{N-1}|\phi(a_{k+1})-\phi(a_k)|.
$$
We say that $\phi\in BV$ if $\|\phi\|_{BV}<\infty$.

The following result is proved by Hofbauer and Keller in \cite{HKeq}.

\begin{theorem}[Hofbauer  and  Keller]
\label{thm:HK}
Let $f \in \H$ and $\phi \in BV$. If
\begin{equation}\label{eq:bdd range}
\sup \phi - \inf \phi < \htop(f),
\end{equation}
then there exists an equilibrium state for $\phi$.
Moreover, the transfer operator defined by
\begin{equation*}\label{eq:L}
\L_\phi g(x) := \sum_{y\in f^{-1}(x)} e^{\phi(y)} g(y)
\end{equation*}
is quasi-compact.
\end{theorem}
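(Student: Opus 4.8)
The plan is to obtain the equilibrium state from the spectral theory of the Perron--Frobenius operator $\L_\phi$ on $BV=BV(I)$: condition \eqref{eq:bdd range} forces $\L_\phi$ to be quasi-compact, and the equilibrium state is then read off from its leading (non-negative) eigendata. (Existence by itself is also a quick consequence of Remark~\ref{rem:existence}, applied to the upper-semicontinuous majorant $\tilde\phi$ of $\phi$: $\tilde\phi$ agrees with $\phi$ off a countable set, hence has the same pressure --- which by \eqref{eq:bdd range} exceeds $\sup\phi$ and so is unaffected by periodic orbits --- and an equilibrium state for $\tilde\phi$ has entropy $\ge\htop(f)-(\sup\phi-\inf\phi)>0$, hence is non-atomic and is an equilibrium state for $\phi$. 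But this route says nothing about $\L_\phi$.) Since a general $f\in\H$ is not Markov, I would work with iterates, using $\L_\phi^n=\L_{S_n\phi,\,f^n}$, where $S_n\phi=\sum_{i=0}^{n-1}\phi\circ f^i$ and $f^n$ is piecewise monotone with monotonicity partition $\P_n=\bigvee_{i=0}^{n-1}f^{-i}\P$ ($\P$ being the monotonicity partition of $f$); lifting instead to the canonical Markov extension (the Hofbauer tower) is an alternative route, convenient for the finer spectral structure.

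First I would establish, for each $n$, a Lasota--Yorke inequality of Rychlik type for $f^n$ with weight $e^{S_n\phi}$,
\[
\mathrm{var}(\L_\phi^n g)\ \le\ A_n\,\mathrm{var}(g)+B_n\,\|g\|_{L^1},\qquad \|\L_\phi^n g\|_{L^1}\ \le\ e^{n\sup\phi}\,\|g\|_{L^1},
\]
with $A_n\le C\,e^{n\sup\phi}\,(1+n\|\phi\|_{BV})$ and $B_n<\infty$ (its growth in $n$ being immaterial). The point that makes $A_n$ this small is that every $Z\in\P_n$ lies in a single monotonicity interval of $f^i$ for every $i\le n$, so $f^i|_Z$ is monotone and $\mathrm{var}_Z(S_n\phi)\le n\|\phi\|_{BV}$: the Birkhoff sums have only \emph{polynomially} growing variation on the atoms of $\P_n$, even though the number of such atoms grows like $e^{\htop(f)n}$. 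Hence $\limsup_n A_n^{1/n}\le e^{\sup\phi}$, and, since $BV\hookrightarrow L^1$ is compact, Hennion's theorem (equivalently Ionescu--Tulcea--Marinescu / Rychlik / Keller) gives $r_{\mathrm{ess}}(\L_\phi)\le e^{\sup\phi}$ on $BV$. On the other hand $r(\L_\phi)=e^{P(\phi)}$, and in any case $r(\L_\phi)\ge e^{\htop(f)+\inf\phi}$: indeed $P(\phi)\ge\htop(f)+\inf\phi$ by the variational principle applied to the measure of maximal entropy, and directly $\|\L_\phi^n\mathbf{1}_J\|_\infty\ge e^{n\inf\phi}\sup_x\#\bigl(f^{-n}(x)\cap J\bigr)$, the right-hand side having exponential growth rate $\htop(f)$ because topologically mixing interval maps are locally eventually onto. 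Therefore
\[
r_{\mathrm{ess}}(\L_\phi)\ \le\ e^{\sup\phi}\ <\ e^{\inf\phi+\htop(f)}\ \le\ r(\L_\phi),
\]
the strict inequality being \emph{precisely} \eqref{eq:bdd range}; so $\L_\phi$ is quasi-compact (the ``Moreover'' of the statement), and this comparison is exactly where the hypothesis is used.

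To finish, put $\lambda:=r(\L_\phi)=e^{P(\phi)}$. Quasi-compactness and positivity of $\L_\phi$ give a non-negative eigenfunction $h\in BV$, $\L_\phi h=\lambda h$, and a non-negative dual eigenmeasure $\nu$ with $\L_\phi^*\nu=\lambda\nu$ (a $\lambda e^{-\phi}$-conformal measure, of full support since $f$ is topologically mixing, so $\int h\,d\nu>0$). Normalising $\int h\,d\nu=1$, the measure $\mu:=h\,d\nu$ is an $f$-invariant probability measure, since $\L_\phi\bigl((g\circ f)h\bigr)=g\,\L_\phi h=\lambda\,g\,h$. The Jacobian of $f$ with respect to $\mu$ is $\lambda\,e^{-\phi}\,(h\circ f)/h$, and, as $h\in BV$ and $\log h\in L^1(\mu)$, Rokhlin's entropy formula for piecewise monotone maps gives $h_\mu(f)=\log\lambda-\int\phi\,d\mu$, \ie $h_\mu(f)+\int\phi\,d\mu=P(\phi)$: $\mu$ is an equilibrium state. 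I expect the main work to be the Lasota--Yorke step in the sharp form $A_n=e^{(\sup\phi+o(1))n}$ --- which relies on the refinement to $\P_n$ --- together with the standard but not wholly trivial identity $r(\L_\phi)=e^{P(\phi)}$; after that the comparison $r_{\mathrm{ess}}<r$ is immediate from \eqref{eq:bdd range}. Anything beyond existence --- simplicity of $\lambda$, absence of other peripheral spectrum, mixing of $\mu$, and $\mu\in\M_+$ --- is cleanest to obtain by transferring the problem to the Hofbauer extension, which is an irreducible aperiodic countable Markov system, and noting that a Gibbs-type estimate makes the lifted eigenfunction and conformal measure project to finite measures on $I$.
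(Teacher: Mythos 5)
Your argument is correct in outline and is essentially the original proof: the paper does not prove Theorem~\ref{thm:HK} itself but quotes it from \cite{HKeq}, describing precisely the route you take (quasi-compactness of $\L_\phi$ on $BV$ via a Lasota--Yorke/Rychlik inequality for the iterates, with the hypothesis \eqref{eq:bdd range} entering exactly as your comparison $r_{\mathrm{ess}}(\L_\phi)\le e^{\sup\phi}<e^{\inf\phi+\htop(f)}\le r(\L_\phi)$, and the equilibrium state read off from the leading eigendata via the conformal measure and Rokhlin's formula). The paper's own new results (Theorem~\ref{thm:exist for bdd range}) instead establish existence \emph{and uniqueness} by the inducing-scheme/countable Markov shift method, which is a genuinely different argument from the one you reconstruct here.
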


Condition \eqref{eq:bdd range} stipulates that $\phi$ does not vary too much; similar conditions have been used by e.g.\ Denker and Urba\'nski \cite{DUeq} for rational maps on the Riemann sphere, and by Oliveira \cite{O} for higher dimensional maps without critical points.
We next state a similar result to Theorem~\ref{thm:HK} from \cite{DKU,Pac}.
Paccaut \cite{Pac} also gives
many interesting statistical properties for the equilibrium states.

\begin{theorem}[Paccaut]
Suppose that $\phi$ satisfies
\begin{itemize}
\item[(a)] $\exp(\phi)\in BV$;
\item[(b)] $\sum_{n=1}^\infty \sup_{C\in \P_n}\|\phi|_C\|_{BV}<\infty$;
\item[(c)] $\sup\phi<P(\phi)$.
\end{itemize}
Then there exists a unique equilibrium state $\mu_\phi$ for $\phi$.
\label{thm:Pac}
\end{theorem}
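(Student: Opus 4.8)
The natural plan is to carry out the classical transfer-operator programme of Ruelle and Keller in the space $BV$ of functions of bounded variation on $I$, using hypotheses (a)--(c) exactly where uniform-expansion estimates would otherwise be needed. Let $\P$ be the (finite) partition of $I$ into branches of monotonicity of $f$, write $\P_n=\bigvee_{i=0}^{n-1}f^{-i}\P$ and $\phi_n=\sum_{i=0}^{n-1}\phi\circ f^i$. The first step is a Lasota--Yorke (Ionescu-Tulcea--Marinescu) inequality of the form
\[
\|\L_\phi^n g\|_{BV}\le A\,\bigl(e^{\sup\phi}\bigr)^n(1+V)\,\|g\|_{BV}+B_n\,\|g\|_{L^1(m)},
\]
where $m$ is Lebesgue measure, $A$ and $B_n$ are constants, and $V:=\sum_{k\ge 1}\sup_{C\in\P_k}\|\phi|_C\|_{BV}$ is finite by (b). Here (a) bounds the one-step variation of the weight $e^\phi$, while (b) is precisely what makes the distortion bound $\mathrm{var}_C(\phi_n)\le V$ hold on every $C\in\P_n$ uniformly in $n$; crucially $e^\phi\in BV\subset L^\infty$, so the critical points of $f$ — which would wreck the estimate for unbounded weights such as $-\log|Df|$ — cause no harm. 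Since $BV$ embeds compactly in $L^1(m)$, the inequality shows $\L_\phi$ is quasi-compact on $BV$ with $r_{\mathrm{ess}}(\L_\phi)\le e^{\sup\phi}$.

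Next I would identify the spectral radius. From $\L_\phi^n\mathbf 1(x)=\sum_{f^ny=x}e^{\phi_n(y)}$, together with the standard description of the topological pressure of a piecewise monotone map through its $n$-cylinders, one gets $\lim_n\frac1n\log\|\L_\phi^n\mathbf 1\|_\infty=P(\phi)$, and then, using the Lasota--Yorke bound to control the variation part, $r(\L_\phi)=e^{P(\phi)}$. Hypothesis (c) now gives $r_{\mathrm{ess}}(\L_\phi)\le e^{\sup\phi}<e^{P(\phi)}=r(\L_\phi)$, so the part of the spectrum on $|z|=e^{P(\phi)}$ consists of finitely many eigenvalues of finite multiplicity. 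Positivity of $\L_\phi$ together with the topological mixing of $f\in\H$ then promotes this to a genuine spectral gap: $e^{P(\phi)}$ is a simple eigenvalue, there is no other peripheral spectrum, the eigenfunction $h\in BV$ may be taken nonnegative and is bounded away from $0$ and $\infty$, and there is a dual eigenmeasure $\nu$ with $\L_\phi^*\nu=e^{P(\phi)}\nu$; normalise $\int h\,d\nu=1$.

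Set $\mu_\phi:=h\,\nu$. A direct computation from $\L_\phi h=e^{P(\phi)}h$ and $\L_\phi^*\nu=e^{P(\phi)}\nu$ shows $\mu_\phi$ is $f$-invariant, and the spectral gap makes it exact, in particular ergodic, and equivalent to $\nu$. The measure $\nu$ is $e^{P(\phi)-\phi}$-conformal, so $\mu_\phi$ has Jacobian $J_{\mu_\phi}f=e^{P(\phi)-\phi}\,(h\circ f)/h$ relative to the finite generating partition $\P$; since $\log h$ is bounded, Rokhlin's formula gives
\[
h_{\mu_\phi}(f)=\int\log J_{\mu_\phi}f\;d\mu_\phi=P(\phi)-\int\phi\;d\mu_\phi+\int\log\tfrac{h\circ f}{h}\;d\mu_\phi=P(\phi)-\int\phi\;d\mu_\phi ,
\]
the last integral vanishing by invariance, so with the variational principle $\mu_\phi$ is an equilibrium state. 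For uniqueness, let $\mu$ be an equilibrium state, which we may take ergodic (equilibrium states form a simplex). A standard consequence of Ruelle's inequality for interval maps gives $h_\mu(f)=0$ whenever $\lambda(\mu)\le 0$, so $h_\mu(f)+\int\phi\,d\mu\le\sup\phi<P(\phi)$ would follow; hence $\lambda(\mu)>0$, $\P$ is generating, and Rokhlin's formula applies to $\mu$ as well, turning $h_\mu(f)+\int\phi\,d\mu=P(\phi)$ into $h_\mu(f)=\int\log J_\nu f\,d\mu$. The Ledrappier-type inequality $h_\mu(f)\le\int\log J_\nu f\,d\mu$ for an invariant measure against a conformal one, with equality for ergodic $\mu$ forcing $\mu\ll\nu$, then yields $\mu\ll\nu$; since $\nu$ is equivalent to the ergodic measure $\mu_\phi$, the measure $\mu$ is ergodic and absolutely continuous with respect to $\mu_\phi$, hence $\mu=\mu_\phi$.

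I expect the Lasota--Yorke step to be the main obstacle: producing the uniform distortion bound on the cylinders $\P_n$ from (a)--(b) and, in particular, checking that the boundary terms generated by the (possibly exponentially many) laps of $f^n$ are absorbed into the $L^1$ part without spoiling the rate $e^{\sup\phi}$, despite the lack of uniform expansion near the critical points. The second delicate point is the equality case of the Ledrappier-type inequality in the uniqueness argument, whose proof rests on a Jensen/convexity computation with conditional expectations relative to $\P$.
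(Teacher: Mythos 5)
The paper does not prove Theorem~\ref{thm:Pac} at all: it is quoted from Paccaut \cite{Pac} (and Denker--Keller--Urba\'nski \cite{DKU}) purely for comparison with Theorem~\ref{thm:HK} and with the authors' own Theorem~\ref{thm:exist for bdd range}. So there is no in-paper proof to compare against; the relevant proofs live in the cited references.

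That said, your sketch is a reasonable account of how Paccaut's theorem is actually established, and it is worth noting that it is \emph{not} the route this paper develops for its own results. Bruin and Todd deliberately avoid the transfer-operator/Lasota--Yorke machinery and instead lift to the Hofbauer tower, build a first-return inducing scheme, and invoke Sarig's thermodynamic formalism for countable Markov shifts (Theorem~\ref{thm:BIP}, Proposition~\ref{prop:all_indu}); the estimate replacing your Lasota--Yorke inequality is the tail estimate of Proposition~\ref{prop:exp_tail}, proved by a combinatorial rome argument on the tower rather than a spectral gap in $BV$. Your approach is the classical Hofbauer--Keller--Paccaut one: quasi-compactness of $\L_\phi$ on $BV$, essential spectral radius controlled by $e^{\sup\phi}$ using (a)--(b) for the uniform distortion and variation of the weight, hypothesis (c) $\sup\phi<P(\phi)$ to open the gap, a conformal measure $\nu$ and density $h\in BV$, and Rokhlin/Ledrappier for uniqueness. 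The two strategies trade off differently: the $BV$ route gives quasi-compactness, a conformal measure and a full spectral description ``for free,'' but requires $\phi$ and $e^\phi$ to live in $BV$ (ruling out e.g.\ $-t\log|Df|$); the inducing route works with merely summable variations of the induced potential and extends to unbounded potentials, which is precisely what the authors need in Sections~\ref{sec:ana} and~\ref{sec:examples}. A couple of small points to be aware of in your sketch: the critical points of $f\in\H$ cause no difficulty here, since the weight $e^\phi$ is bounded and $\L_\phi$ only sees the branch partition, not the Jacobian; and the uniqueness step via the equality case of the Ledrappier-type inequality does require one to first check that any equilibrium state is nonatomic and has positive Lyapunov exponent so that $\P$ is generating and Rokhlin's formula applies --- you gesture at this via Ruelle's inequality, which is the right idea and matches the computation \eqref{eq:lmuhmu} in the paper.
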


Note that condition (b) on $\phi$ is stronger than the condition $\phi\in BV$, used in Theorem~\ref{thm:HK}.  It is also stronger than that in our results in Section~\ref{subsec:main}.  However, \eqref{eq:bdd range} implies condition (c).  This follows since assuming \eqref{eq:bdd range},
the measure of maximal entropy  $\mu_{\htop(f)}$ gives
$$
P(\phi) \ge \htop(f)+\int \phi~d\mu_{\htop(f)}\ge \htop(f) + \inf\phi > \sup\phi.
$$
Condition (c) implies that any equilibrium state $\mu$ must have
$h_\mu(f) \ge P(\phi) - \sup \phi > 0$.
Similarly, supposing \eqref{eq:bdd range},
and using Ruelle's inequality on Lyapunov exponents (\ie $h_\mu(f) \le \lambda(\mu)$, see \cite{Ruelleineq}),
equilibrium states $\mu$ satisfy
\begin{eqnarray}\label{eq:lmuhmu}
\lambda(\mu) &\ge& h_\mu(f) =P(\phi)-\int\phi~d\mu \nonumber \\
&\ge&
\htop(f) + \int\phi~d\mu_{\htop(f)}-\sup\phi\ge \htop(f)-(\sup\phi-\inf\phi)>0.
\end{eqnarray}
Hence $P_+(\phi) := \sup_{\mu \in \M_+} \{ h_\mu(f) + \int \phi \ d\mu \}
=P(\phi)$, unless the equilibrium state is supported on $\orb(\Crit)$.

\subsection{Potentials with Summable Variations.}
\label{subsec:var}
The results that we want to present rely on a different approach
to variation to that above, which is closer to symbolic dynamics.
Let $\P_1$ be the partition of $I$ into maximal interval of
monotonicity (the \emph{branch partition}) and write
$\P_n = \bigvee_{i=0}^{n-1} f^{-i}(\P_1)$.
With respect to this partition
we define that {\em $n$-th variation}
\begin{equation*}
V_n(\phi) := \sup_{\c_n \in \P_n} \sup_{x,y \in \c_n} |\phi(x) -
\phi(y)|,
\end{equation*}
In this context the following was proved in \cite{HKeq}.
\begin{theorem}[Hofbauer  and  Keller]
\label{thm:HKsum}
Let $f \in \H$ be $C^3$ and let $\phi$ be a potential so that
\begin{enumerate}
\item[(i)]
it has summable variations, \ie
$\sum_n V_n(\phi) < \infty$;
\item[(ii)] the following specification-like property holds:
for every $x \in I$, there is $k$ and an increasing sequence $\{n_i\}_i$
such that
\[
\cup_{j=1}^k f^{n_i+j}(\c_{n_i}[x]) = I,
\]
where $\c_{n_i}[x] \in \P_{n_i}$ is the $n_i$-cylinder containing $x$.
\end{enumerate}
Then there exists an equilibrium state for $\phi$
and the transfer operator $\L_\phi$ is quasi-compact.
\end{theorem}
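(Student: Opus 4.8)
The plan is to pass to the canonical Markov extension (Hofbauer tower) $\hat f\colon\hat I\to\hat I$ built from the branch partition $\P_1$, and to do the analysis of the transfer operator there. The projection $\pi\colon\hat I\to I$ semiconjugates $\hat f$ to $f$, and $\hat I$ carries a countable Markov partition $\hat\P$ refining its level structure so that each element of $\hat\P$ maps diffeomorphically, with uniformly bounded distortion, onto a union of levels; this is where the $C^3$ hypothesis is used, via Koebe-type distortion estimates for maps with controlled Schwarzian, non-flat critical points and hyperbolically repelling periodic orbits. The lifted potential $\hat\phi=\phi\circ\pi$ inherits from (i) the property that its variations relative to $\hat\P_n=\bigvee_{i=0}^{n-1}\hat f^{-i}\hat\P$ are summable, so $(\hat I,\hat f,\hat\phi)$ is, symbolically, a topologically mixing countable Markov shift with a potential of summable variations --- ample regularity for the Ruelle-Perron-Frobenius theory. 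Hypothesis (ii) should be precisely what is needed to verify the ``big images and preimages'' property of this shift, since the specification-like condition says that every cylinder surjects onto $I$, and hence onto every level of the tower, in a uniformly bounded number of iterates.

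With this structure in hand I would run the Ruelle-Perron-Frobenius machinery for such shifts. Working on the Banach space of functions $g$ on $\hat I$ with finite norm controlling both $\|g\|_\infty$ and the geometric decay of the variations of $g$ along cylinders, one proves a Lasota-Yorke (Doeblin-Fortet) inequality
\[
\|\hat\L_{\hat\phi}^{\,n}g\|\ \le\ A\,\alpha^{\,n}\,\|g\|\ +\ B_n\,\|g\|_\infty,\qquad \alpha<e^{P(\phi)} ,
\]
where summability of the variations of $\hat\phi$ yields the contraction of the variation term, and the big-images-and-preimages property controls the sup-norm term and prevents mass from escaping to infinity along the levels. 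This gives that $\hat\L_{\hat\phi}$ is quasi-compact with spectral radius $\rho$, a simple isolated leading eigenvalue (topological mixing is used here for aperiodicity), a strictly positive eigenfunction $\hat h$, and a conformal measure $\hat\nu$ with $\hat\L_{\hat\phi}^{*}\hat\nu=\rho\,\hat\nu$. Quasi-compactness of the operator $\L_\phi$ on $I$ (say on $BV$) then follows either by transporting the spectral data down through the finite-to-one map $\pi$, or by repeating the Lasota-Yorke estimate directly for $\L_\phi$, using the Markov extension only to supply the covering and distortion bounds.

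The equilibrium state is produced as the normalisation of $d\hat\mu=\hat h\,d\hat\nu$ on the tower, pushed forward to $\mu=\pi_{*}\hat\mu$ on $I$. The points that need care are: that $\hat\mu$ is a \emph{finite} invariant measure, which under the big-images-and-preimages property comes out of the positive-recurrence part of the theorem, but which in the interval-map picture also has to be reconciled with possible escape of mass and with the lifting of measures to the tower (cf.\ \cite{BrKel}); that $\rho=e^{P(\phi)}$, which follows from Hofbauer's identity relating the tower (Gurevich) pressure to $P(\phi)$ together with (ii); and that $h_\mu(f)+\int\phi\,d\mu=\log\rho$, which is the usual Abramov / Ornstein-Weiss computation. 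Note that, by Remark~\ref{rem:existence}, once $\phi$ is known to be upper semicontinuous existence by itself is immediate; the value added by this route is the spectral information, i.e.\ the quasi-compactness.

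The step I expect to be the main obstacle is the Lasota-Yorke inequality with the \emph{sharp} constant $\alpha<e^{P(\phi)}$ --- equivalently, showing that the essential spectral radius of $\hat\L_{\hat\phi}$ is strictly smaller than its spectral radius. Because the tower is genuinely infinite and the branches near the critical points are badly distorted, one has to make three ingredients interlock quantitatively: (a) the geometric decay coming from $\sum_n V_n(\phi)<\infty$; (b) the distortion control from the $C^3$--Koebe estimate; and (c) the uniform return to a reference set provided by (ii), which is what pins down the term $B_n\,\|g\|_\infty$ and rules out a spurious part of the spectrum supported on the high levels of the tower. Once this spectral gap is in place, the remaining steps --- invariance and finiteness of $\mu$, and the free-energy identity --- are routine.
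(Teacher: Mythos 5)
The paper does not supply a proof of this theorem: it is quoted verbatim from Hofbauer and Keller \cite{HKeq}, so there is no ``paper's own proof'' to compare against. What you propose is, in broad outline, faithful to the Hofbauer--Keller strategy --- pass to the Markov extension $(\hat I,\hat f)$, lift the potential, establish a spectral gap for the transfer operator via a Doeblin--Fortet inequality, and use the resulting eigenfunction and conformal measure to build the equilibrium state --- so the architecture of the argument is right. But there are two places where the proposal hand-waves over the step that actually makes the theorem work.

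First, the identification of condition (ii) with Sarig's ``big images and preimages'' (BIP) property is not a translation, it is a non-trivial claim and, as stated, it looks false. In (ii) the integer $k$ is allowed to depend on $x$, and the covering is only required along a subsequence $\{n_i\}$ of times, whereas BIP asks for a finite set of states that uniformly dominates all transitions. Nothing in (ii) gives you a uniform $k$ across all cylinders or all levels of the tower, so the passage from (ii) to BIP (and hence to positive recurrence in Sarig's machinery) is precisely the missing idea. Hofbauer and Keller do not, and could not, use the BIP formalism --- it postdates their paper --- and the role of (ii) in their argument is different: it prevents the dominant part of the pressure (equivalently, the essential spectral radius) from living on the high levels of the tower, by forcing each orbit to return, infinitely often and within a bounded (per-$x$) window, to a part of the tower that maps over everything. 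Making that quantitative, and showing that it pins the essential spectral radius strictly below $e^{P(\phi)}$ in a $BV$ (or $BV$-on-tower) norm, is the content of the theorem; your sketch flags it as the ``main obstacle'' but does not actually get past it.

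Second, the role of the $C^3$ hypothesis is misattributed. For a \emph{general} potential $\phi$ with summable variations, the contraction in the Lasota--Yorke inequality comes from $\sum_n V_n(\phi)<\infty$, not from Koebe distortion of $f$; the transfer operator $\L_\phi$ does not see $|Df|$ at all unless $\phi=-t\log|Df|$. The $C^3$ (together with non-flat critical points and hyperbolic periodic orbits) is used to control the \emph{topological} and \emph{combinatorial} structure of the map and its Markov extension --- for instance via negative Schwarzian / Ma\~n\'e-type arguments ruling out wandering intervals and giving the growth estimates for the tower that feed into the entropy comparison --- not to control the distortion of $\L_\phi$ itself. As written, your sketch leans on a Koebe estimate for the operator that is not there. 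If you want to make this route rigorous, you should (a) state precisely what uniform reachability-of-the-base property you extract from (ii), and prove it gives the gap between $P_G$ and $P_G^*$ on the tower; and (b) separate cleanly which hypotheses control the dynamics of $f$ (the $C^3$, non-flatness, hyperbolic periodics) from which control the regularity of $\phi$ (the summable variations).
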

Property (ii) above is not automatic for interval maps, and it is stronger than the standard specification property which holds for all topologically
transitive interval maps, see \cite{Blokh} and \cite{Buspeci}.
For instance, the Fibonacci unimodal map, or more generally, every
map with a persistently recurrent critical point (see e.g.\ \cite{BruinTAMS}) fails this condition.
In \cite{DKU}, Denker et al. replace the conditions of
Theorem~\ref{thm:HKsum} to (i) $P(\phi) > \sup \phi$ and (ii)
$\sup_n \beta_n(\phi) < \infty$, where $\beta_n$ is defined in \eqref{eq:beta}.

Notice that the set of potentials with summable variations and the set $BV$ have non-empty intersection, but neither is contained in the other, as the following examples demonstrate.

{\bf Example 1:}
Let $f(x) = 2x \pmod 1$ on $[0,1]$ be the doubling map.
Clearly, the $n$-cylinders of $f$ are dyadic intervals of length $2^{-n}$.
The potential function
\[
\phi(x) := \left\{ \begin{array}{lll}
0 & \mbox{ if } x = 0;\\
\frac{-1}{\log x} & \mbox{ if } x \in (0, \frac12);\\
\frac1{\log 2}& \mbox{ if } x \in [\frac12,1],
\end{array} \right.
\]
is increasing and bounded, and has $\|\phi\|_{BV}=\frac1{\log 2}$. However,
$V_n(\phi) \ge \frac1{n\log 2}$, because $\phi(2^{-n}) - \phi(0) = \frac1{n\log 2}$.
So $\sum_n V_n(\phi)$ diverges. Note that $\phi$ is not H\"older either.

{\bf Example 2:}
For $f$ as in Example 1, the potential function
\[
\psi(x) := \sum_{n \ge 1} \psi_n(x), \hbox{ where } \psi_n(x):= 4^{-n} \sin(4^{n+1}\pi x)\cdot 1_{\left[\frac1{2^n},\frac1{2^{n-1}}\right]}(x) \]
has $\|\psi\|_{BV}= \sum_n\|\psi_n\|_{BV} = \infty$ since $\|\psi_n\|_{BV}=2$.
But $V_n(\psi) \le 4 \cdot 2^{-n}$, so it has summable variations.  Note that this function is Lipschitz.

\subsection{Lifting Potentials to Inducing Schemes}\label{subsec:inducelifting}

An inducing scheme $(X,F,\tau)$ over $(I,f)$ consists of an
interval $X \subset I$ containing a (countable) collection of disjoint
subintervals $X_i$, and inducing time $\tau:X \to \N$ such that
$\tau_i := \tau|_{X_i}$ is constant and $F|_{X_i}  := f^{\tau_i}|_{X_i}$
is monotone onto $X$. If $\mu_F$ is an $F$-invariant measure,
and $\int_X \tau d\mu_F < \infty$, then $\mu_F$ can be projected to an $f$-invariant measure $\mu$ as in formula \eqref{eq:lift} below.
Any measure $\mu$ that can be obtained this way is called compatible to
the inducing scheme.
See Section~\ref{subsec:induce} the precise definitions.

Proposition~\ref{prop:induce} below
gives a general way of constructing
inducing schemes, which we will apply throughout the paper.
In Section~\ref{subsec:Hofbauer tower}, we explain the procedure
of lifting measures $\mu$ to
Hofbauer tower $(\hat I, \hat f)$, which is behind the construction
in this proposition. The full proof of Proposition~\ref{prop:induce}
is given in \cite[Theorem 3 and Lemma 2]{BTeqnat}.

\begin{proposition}\label{prop:induce}
If $\mu \in \M_+$ then it is compatible to some induced system $(X,F,\tau)$
that corresponds to a first return map to a set $\hat X$ on the Hofbauer tower, where $\hat\mu(\hat X)>0$.
So $\frac{1}{\hat\mu(\hat X)} \int_{\hat X} \tau\ d\hat\mu < \infty$, and in addition, we can take $X \in \P_n$
for some $n$.

Conversely, if an inducing scheme $(X,F,\tau)$ has a non-atomic
$F$-invariant measure $\mu_F$ such that
$\int \tau \ d\mu_F < \infty$,
then it projects to an $f$-invariant measure $\mu \in \M_+$.
\end{proposition}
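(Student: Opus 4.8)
The strategy is to pass between $(I,f)$ and its Hofbauer tower $(\hat I,\hat f)$, with the canonical factor map $\pi\colon\hat I\to I$ (so $\pi\circ\hat f=f\circ\pi$), the tower having countably many domains, each of which projects injectively onto a subinterval of $I$ and is mapped by $\hat f$ onto a union of entire domains. Granting this structure, the two assertions reduce essentially to Kac's formula (for the inducing time) and Abramov's formula (for entropy and Lyapunov exponent); the delicate one-dimensional input — the lifting theorem and the distortion estimates — is borrowed from \cite{BrKel} and \cite{BTeqnat}.

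For the first assertion I would begin from the lifting theorem for the Hofbauer tower: since $\mu\in\M_+$ has $\lambda(\mu)>0$ and is not carried by $\orb(\Crit)$, it lifts to an $\hat f$-invariant probability measure $\hat\mu$ on $\hat I$ with $\pi_*\hat\mu=\mu$; passing to an ergodic component if needed, I may take $\hat\mu$ ergodic and supported on the transitive core of the tower (positivity of $\lambda(\mu)$ is precisely what prevents $\hat\mu$-mass from escaping to infinity in the tower). Since $\hat\mu$ is a probability and the tower is countable, and since the cylinders of $f$ refine the domain structure, I would pick a cylinder $X\in\P_n$ (for a suitable $n$) lying in the projection of some domain, whose lift $\hat X$ into that domain has $\hat\mu(\hat X)>0$. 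Now let $\hat F$ be the first return map of $\hat f$ to $\hat X$ with return time $\hat\tau$: Poincar\'e recurrence gives $\hat\tau<\infty$ $\hat\mu$-a.e.\ on $\hat X$, and Kac's formula gives $\int_{\hat X}\hat\tau\,d\hat\mu\le\hat\mu(\hat I)=1$, hence $\frac1{\hat\mu(\hat X)}\int_{\hat X}\hat\tau\,d\hat\mu<\infty$. Because $\pi$ is injective on $\hat X$, transporting through $\pi$ yields the inducing time $\tau$ (namely $\hat\tau$ read on $X$) and the induced map $F=f^{\tau}$ on $X$, whose branch partition $\{X_i\}$ is obtained by refining the first-return itinerary by $\P_1$ at each step, so that every $f^{\tau_i}|_{X_i}$ is a composition of monotone branches of $f$; that each $F|_{X_i}$ is \emph{onto} $X$ comes from propagating the Markov property of $\hat f$ through the return itinerary, reconciled with $X\in\P_n$ as in \cite[Theorem 3]{BTeqnat}. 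Finally $\mu$ is compatible to $(X,F,\tau)$: the normalised push-forward $\mu_F:=\hat\mu(\hat X)^{-1}(\pi|_{\hat X})_*(\hat\mu|_{\hat X})$ is $F$-invariant with $\int_X\tau\,d\mu_F<\infty$, and it projects back to $\mu=\pi_*\hat\mu$ through \eqref{eq:lift}.

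For the converse, given $(X,F,\tau)$ and a non-atomic $F$-invariant $\mu_F$ with $\int\tau\,d\mu_F<\infty$ (which I may assume ergodic, decomposing it otherwise), I would define the projected measure by $\mu:=\frac1{\int\tau\,d\mu_F}\sum_i\sum_{k=0}^{\tau_i-1}(f^k)_*(\mu_F|_{X_i})$ and check directly from $F$-invariance that $\mu$ is an $f$-invariant probability (this is formula \eqref{eq:lift}), ergodic because $\mu_F$ is. Since $\mu_F$ is non-atomic and point preimages under $f^k$ are finite, $\mu$ is non-atomic, hence not supported on the countable set $\orb(\Crit)$. For the Lyapunov exponent I would use Abramov's formula $\lambda(\mu)=\frac1{\int\tau\,d\mu_F}\int_X\log|DF|\,d\mu_F$ together with $\int_X\log|DF|\,d\mu_F>0$: a non-atomic $\mu_F$ forces $F$ to have at least two branches, each mapping a proper subinterval $X_i\subsetneq X$ onto all of $X$, and the Koebe-type distortion estimates available for maps in $\H$ (no wandering intervals, non-flat critical points, hyperbolically repelling periodic points) then make such induced maps expanding on average; this is the content of \cite[Lemma 2]{BTeqnat}. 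Hence $\lambda(\mu)>0$ and $\mu\in\M_+$.

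The step I expect to be the main obstacle is in the first assertion: simultaneously arranging that the inducing domain $X$ is an honest cylinder $X\in\P_n$ and that every branch $F|_{X_i}$ is monotone \emph{onto} $X$. This forces a careful choice of the tower domain containing $\hat X$ together with a matching subdivision by $\P_1$, and is precisely the construction of \cite[Theorem 3]{BTeqnat}; once the inducing scheme is in hand, the remaining measure-theoretic bookkeeping (Kac, Abramov, non-atomicity) is routine.
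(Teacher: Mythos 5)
The paper itself gives no proof of this proposition --- it simply points to \cite[Theorem 3 and Lemma 2]{BTeqnat} --- so your task was to reconstruct that argument, and structurally you have done so: lift $\mu\in\M_+$ to the Hofbauer tower via Theorem~\ref{thm:Kell}, locate $\hat X$ with $\hat\mu(\hat X)>0$, take the first return map, use Poincar\'e recurrence and Kac to get the finite expected return time, and transport through $\pi$ (which is injective on a domain) to obtain $(X,F,\tau)$; conversely, spread $\mu_F$ by \eqref{eq:lift}, observe non-atomicity of $\mu$ since fibres of $f^k$ are finite, and conclude $\lambda(\mu)>0$. The one place I would not sign off on as written is the final step of the converse: you argue $\int_X\log|DF|\,d\mu_F>0$ directly from ``Koebe-type distortion making the induced map expanding on average.'' For a bare inducing scheme as defined in Section~\ref{subsec:induce} there is no guaranteed Koebe space around $X$, so a uniform distortion constant is not automatic, and even with bounded distortion an ergodic non-atomic $\mu_F$ concentrated near the largest branch could a priori have small (even zero) average $\log|DF|$ --- non-atomicity of $\mu_F$ alone does not force positive entropy or positive average expansion of $F$. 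The mechanism the paper actually has available, and almost certainly the one in \cite[Lemma 2]{BTeqnat}, is the converse half of Theorem~\ref{thm:Kell}: the Markov structure of $(X,F,\tau)$ produces an $\hat f$-invariant non-atomic lift $\hat\mu$ on the Hofbauer tower (again because each $X_i$ is an $n$-cylinder, hence a path from the base), and then ``$\hat\mu$ non-atomic and $\hat f$-invariant $\Rightarrow\lambda(\hat\mu)>0$'' gives $\lambda(\mu)>0$ by projection. Your conclusion is right, but I would replace the local distortion heuristic by this cleaner route through the tower, which is also what the paper's own citation signals.
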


Given a potential $\phi$, the lifted potential $\Phi$
for inducing scheme $(X,F,\tau)$ is
given by $\Phi(x) := \sum_{k=0}^{\tau(x)-1} \phi \circ f^k(x)$.
If
\begin{equation}\label{SVI}
\sum_nV_n(\Phi) < \infty \tag{SVI},
\end{equation}
then we say that $\phi$ satisfies the summable variations for induced
potential condition, with respect to this inducing scheme.
Lemmas~\ref{lem:sum induced} and \ref{lem:Holder} give
general conditions on $\phi$ and/or the inducing scheme that imply (SVI).

\subsection{Main Results}\label{subsec:main}
After these preparation we can state our main results on the existence
and uniqueness of equilibrium states, and analyticity of the pressure function.
The existence of equilibrium states
in $\M_{erg}$ often follows by Remark~\ref{rem:existence},
but the following theorem gives conditions for uniqueness of equilibrium
states in $\M_+$.

\begin{theorem}
Let $f\in \H$ and $\phi$ be a potential \st $\sup \phi - \inf \phi < \htop(f)$ and $V_n(\phi) \to 0$.  If the induced potentials corresponding to the inducing schemes given by Proposition~\ref{prop:induce} satisfies \eqref{SVI}, then
\begin{itemize}
\item[(a)] there exists a unique equilibrium state $\mu_\phi$;
\item[(b)] $\mu_\phi$ is compatible to an induced system with inducing time
such that the tails $\mu_\Psi(\{ \tau > n\})$ decrease exponentially.
(Here $\mu_\Psi$ is the equilibrium state of the induced potential
$\Psi(x) = \sum_{k=j}^{\tau(x)-1} \psi \circ f^j(x)$ of $\psi := \phi - P(\phi)$.)
\end{itemize}
\label{thm:exist for bdd range}
\end{theorem}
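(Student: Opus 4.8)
The plan is to reduce everything to the theory of countable Markov shifts applied to a well-chosen inducing scheme, following the philosophy of \cite{BTeqnat,SaBIP}. First I would fix an inducing scheme $(X,F,\tau)$ coming from Proposition~\ref{prop:induce} with $X \in \P_n$; since (SVI) holds, the induced potential $\Psi$ for $\psi = \phi - P(\phi)$ has summable variations over the full-branch Markov structure generated by the $X_i$. The normalization by $P(\phi)$ is chosen so that the induced system has pressure zero in the sense of Sarig: one shows $P_F(\Psi) = 0$, using on one side that any $\mu_F$ with $\int \tau\,d\mu_F < \infty$ projects (by the converse part of Proposition~\ref{prop:induce}) to some $\mu \in \M_+$ with $h_\mu(f) + \int \phi\,d\mu \le P(\phi)$, hence by Abramov's formula $h_{\mu_F}(F) + \int \Psi\,d\mu_F \le 0$; and on the other side that the genuine $f$-equilibrium state — which exists in $\M_+$ because $\sup\phi - \inf\phi < \htop(f)$ forces positive entropy and positive Lyapunov exponent via \eqref{eq:lmuhmu}, so Proposition~\ref{prop:induce} makes it compatible — lifts to an $F$-invariant measure realizing the value $0$.

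With $P_F(\Psi) = 0$ established, I would invoke Sarig's results: the shift is topologically mixing (this is where $f \in \H$ and the construction via first return maps on the Hofbauer tower are used, as in \cite{BTeqnat}), $\Psi$ has summable variations, and one checks the BIP (big images and preimages) property for the induced system — again a structural consequence of the Hofbauer-tower construction. Then there is a conformal measure $m_\Psi$ and an invariant measure $\mu_\Psi \ll m_\Psi$ with $d\mu_\Psi/dm_\Psi$ bounded away from $0$ and $\infty$, and $\mu_\Psi$ is the unique equilibrium state for $\Psi$ on the shift; moreover the Ruelle–Perron–Frobenius operator for $\Psi$ has a spectral gap. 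The spectral gap gives exponential decay of correlations, from which the tail estimate $\mu_\Psi(\{\tau > n\})$ decreasing exponentially in part~(b) follows by the standard argument: $\tau$ is constant on $1$-cylinders, so $\{\tau > n\}$ is a union of cylinders, and exponential mixing (or directly the exponential decay of the leading eigenfunction/eigenmeasure on deep cylinders) controls its measure — provided one first checks $\int e^{\delta\tau}\,d\mu_\Psi < \infty$ for small $\delta > 0$, which is itself a consequence of $P_F(\Psi) = 0$ together with analyticity of $t \mapsto P_F(\Psi + t\tau)$ near $t = 0$ à la Sarig.

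The remaining work is to transfer uniqueness back to $(I,f)$. One must show any $f$-equilibrium state $\mu$ for $\phi$ lies in $\M_+$ (done, modulo the case of measures on $\orb(\Crit)$, which \eqref{eq:lmuhmu} and condition \eqref{eq:bdd range} exclude since such measures have zero entropy while $h_\mu(f) \ge P(\phi) - \sup\phi > 0$), hence is compatible with the inducing scheme, hence lifts to an $F$-invariant $\mu_F$ with $\int\tau\,d\mu_F < \infty$ and $h_{\mu_F}(F) + \int\Psi\,d\mu_F = 0$; by uniqueness on the shift $\mu_F = \mu_\Psi$, so $\mu = \mu_\phi$ is the projection of $\mu_\Psi$. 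One small subtlety is that a priori $\mu$ might be compatible with a \emph{different} inducing scheme from the one we fixed; this is handled as in \cite{BTeqnat} by the observation that all these schemes arise as first return maps to subsets of the same Hofbauer tower and the lifts are canonically related, so the equilibrium measure is intrinsic.

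The main obstacle I expect is verifying $P_F(\Psi) = 0$ — more precisely, the inequality $P_F(\Psi) \le 0$, i.e. that no $F$-invariant measure beats $0$. The hypothesis $V_n(\phi)\to 0$ (rather than mere summability of $V_n(\Phi)$) is presumably needed precisely here, to control the distortion of $\phi$-Birkhoff sums along induced orbits and to guarantee the relevant variational principle/approximation of arbitrary invariant measures in $\M_+$ by compatible ones; pushing this through carefully, together with checking the BIP property and topological mixing of the concrete shift produced by Proposition~\ref{prop:induce}, is the technical heart of the argument.
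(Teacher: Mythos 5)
Your high-level plan (normalize so the induced pressure is zero, invoke Sarig's theory for the full shift, then transfer uniqueness back) is the right framework and broadly matches the paper, but there is a genuine gap at the technical heart of the argument, and it is exactly where the hypothesis $\sup\phi - \inf\phi < \htop(f)$ does its quantitative work.

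You propose to obtain the exponential tail in part (b) from ``analyticity of $t \mapsto P_F(\Psi + t\tau)$ near $t=0$ \`a la Sarig,'' equivalently from $\int e^{\delta\tau}\,d\mu_\Psi < \infty$ for some $\delta>0$. But finiteness of $P_G(\Psi + t\tau)$ for small $t>0$ --- that is, strict positivity of the discriminant $\dm_F[\psi]$ --- is precisely the thing that needs to be proved; it is \emph{not} a consequence of $P_F(\Psi)=0$. The paper's Theorem~\ref{thm:tails} shows $\dm_F[\psi]>0$ is \emph{equivalent} to exponential tails, so your step is circular. Indeed the Manneville--Pomeau example in Proposition~\ref{prop:MP} has an induced scheme with $P_G(\Psi)<\infty$ but $\int\tau\,d\mu_\Psi=\infty$: there the pressure becomes infinite the moment you shift $\Psi$ downward, exactly because the bounded-range condition fails. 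So some substitute for that hypothesis must enter your argument, and in your proposal it never does beyond forcing positive entropy.

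What your proposal is missing is the content of Proposition~\ref{prop:exp_tail}: the rome/spectral-separation argument showing $P_G^*(\hat\phi,\hat X) < P_G(\hat\phi,\hat X)$, and hence $Z_n^*(\hat\psi,\hat X) \le C e^{-\gamma n}$, for a suitably small cylinder $\hat X$ on the Hofbauer tower. This strict inequality is where $\sup\phi-\inf\phi < h^*_{top}$ is used quantitatively (to control the weight of orbits that stay high in the tower), and it delivers in one stroke both $Z_0(\hat\Psi^S)<\infty$ for $S$ in a left neighbourhood of $0$ (hence $P_G(\Psi^S)<\infty$, which is condition (a) of Proposition~\ref{prop:all_indu}) and the exponential decay $\mu_\Psi(\{\tau>n\})\le C\sum_{k\ge n} Z_k^*(\hat\psi,\hat X)$ via the Gibbs property. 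Note also that even the \emph{finiteness} of $P_G(\Psi)$, which you implicitly take for granted before invoking Sarig's existence theorem, already requires control of $Z_0(\Psi)$ and hence of these tail sums. Finally, a smaller point: you guess that $V_n(\phi)\to 0$ is needed to prove $P_F(\Psi)\le 0$; in the paper it is instead used to get $\beta_n(\hat\phi)=o(n)$ for Corollary~\ref{cor:var prin} (so that $P_G(\hat\phi)=P_+(\phi)=P(\phi)$) and as a hypothesis in the rome argument of Proposition~\ref{prop:exp_tail}, so the role you assign it is off.
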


Note that $V_n(\phi) \to 0$ implies that $\phi$ can only have discontinuities
at precritical points.

\begin{remark}\label{rm:for bdd range}
If the tails $\mu_{\Psi}(\{\tau > n\})$ decrease at certain rates,
then one can
deduce many statistical properties of the equilibrium state.
For instance, exponential decay of correlations follows from
exponential tails, see \cite{Y}, but for the Central Limit Theorem,
Invariance Principles, e.g.\ \cite{MN1} and large deviations \cite{MN2},
already polynomial tail behaviour suffices.
\end{remark}

Instead of a single potential, thermodynamic formalism makes use of
families $t \phi$ of potentials.
The occurrence of phase transitions is related to the smoothness
of the pressure
function $t \mapsto P(t \phi)$.
Using the technique in \cite{BTeqnat} we derive

\begin{theorem}
Let $f\in \H$ and $\phi$ as in Theorem~\ref{thm:exist for bdd range}.
Then the map $t \mapsto P(-t \, \phi)$ is analytic for $t$ in a
neighbourhood of $[-1,1]$. \label{thm:ana gen}
\end{theorem}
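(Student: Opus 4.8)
\medskip

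The plan is to follow the inducing-scheme strategy of \cite{BTeqnat}: push the problem onto the thermodynamic formalism of a countable Markov shift, where analyticity of the pressure comes from analytic perturbation theory for the Ruelle transfer operator. Fix a real $t_0$ in a neighbourhood of $[-1,1]$. Since $\sup(-t_0\phi)-\inf(-t_0\phi)=|t_0|(\sup\phi-\inf\phi)<\htop(f)$ and $V_n(-t_0\phi)=|t_0|\,V_n(\phi)\to 0$, Theorem~\ref{thm:exist for bdd range} applies to $-t_0\phi$: there is a unique equilibrium state $\mu_{-t_0\phi}$, and by Proposition~\ref{prop:induce} it is compatible to an inducing scheme $(X,F,\tau)$ with $X\in\P_n$ for which \eqref{SVI} holds, $\int\tau\,d\mu_F<\infty$, and the induced equilibrium measure has exponentially decaying tails $\mu_\Psi(\{\tau>m\})$. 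The induced potential of $-t\phi$ on this scheme is $-t\Phi$ with $\Phi:=\sum_{k=0}^{\tau-1}\phi\circ f^k$; it is \emph{linear} in $t$, has $V_m(-t\Phi)=|t|\,V_m(\Phi)$ summable, and since $\tau$ is locally constant, $-t\Phi-p\tau$ is a summable-variation potential on the Markov shift $(X,F)$ for every $(t,p)$. As $F$ maps each $X_i$ onto $X$, this shift has the big-images-and-preimages property.

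Next I would recall, following \cite{BTeqnat}, the link between the two formalisms: for $t$ close to $t_0$, the number $p=P(-t\phi)$ is the unique solution of $P_F(-t\Phi-p\tau)=0$, where $P_F$ is the Gurevich pressure of $(X,F)$, and the normalised lift of the $F$-equilibrium state for $-t\Phi-p\tau$ projects to $\mu_{-t\phi}$; in particular the relevant $\mu_{-t\phi}$ remains compatible to the fixed scheme under this small perturbation. Hence it suffices to prove that $(t,p)\mapsto P_F(-t\Phi-p\tau)$ extends to a holomorphic function near $(t_0,p_0)$ with $p_0:=P(-t_0\phi)$, and then to apply the implicit function theorem. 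For the holomorphy the key point is a complex neighbourhood of $(t_0,p_0)$ on which the transfer operators $\L_{-t\Phi-p\tau}$ act boundedly, with a spectral gap, on a fixed space of locally H\"older observables. Boundedness reduces to convergence of $\sum_i\sup_{X_i}\bigl|e^{-t\Phi-p\tau}\bigr|=\sum_i\sup_{X_i}e^{-\Re(t)\Phi-\Re(p)\tau}$; using $|\Phi|\le\|\phi\|_\infty\,\tau$ this is dominated, for $(t,p)$ close enough to $(t_0,p_0)$, by $\sum_i\sup_{X_i}e^{-t_0\Phi-(p_0-\eps)\tau}$, which is finite for some $\eps>0$ because that finiteness of $P_F(-t_0\Phi-(p_0-\eps)\tau)$ is precisely what underlies the exponential tails in Theorem~\ref{thm:exist for bdd range}(b). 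On this neighbourhood $(t,p)\mapsto\L_{-t\Phi-p\tau}$ is an analytic operator family whose leading eigenvalue $e^{P_F(-t\Phi-p\tau)}$ is simple and isolated (cf.\ \cite{SaBIP}), so Kato's analytic perturbation theory yields holomorphy of $e^{P_F(-t\Phi-p\tau)}$, hence of $P_F(-t\Phi-p\tau)$, near $(t_0,p_0)$.

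To finish, $\partial_p P_F(-t\Phi-p\tau)\big|_{(t_0,p_0)}=-\int\tau\,d\mu_{-t_0\Phi-p_0\tau}$ is finite (Proposition~\ref{prop:induce}) and strictly negative, so the implicit function theorem gives a unique holomorphic solution $t\mapsto p(t)$ of $P_F(-t\Phi-p\tau)=0$ near $t_0$; by the characterisation above $p(t)=P(-t\phi)$ for real $t$, so $t\mapsto P(-t\phi)$ is analytic on a complex neighbourhood $U_{t_0}$ of $t_0$. Covering the compact set $[-1,1]$ by finitely many such $U_{t_0}$ and noting that the local extensions all agree with $P(-t\phi)$ on the real axis, hence with one another on overlaps by the identity theorem, they glue to one analytic function on a neighbourhood of $[-1,1]$.

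The step I expect to be the main obstacle is producing an \emph{open} complex neighbourhood of $(t_0,p_0)$ on which the transfer-operator family is defined and keeps a spectral gap, rather than merely controlling the boundary value $P_F(-t_0\Phi-p_0\tau)=0$. This is exactly where the exponential tail conclusion of Theorem~\ref{thm:exist for bdd range}(b) --- equivalently $P_F(-t_0\Phi-(p_0-\eps)\tau)<\infty$ for some $\eps>0$ --- is indispensable: without this slack one cannot differentiate the pressure or run the implicit function theorem.
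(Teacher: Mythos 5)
Your proposal is correct and follows essentially the route the paper delegates to \cite[Theorem 5]{BTeqnat}: for each $t_0$ near $[-1,1]$ the bounded-range hypothesis still holds for $-t_0\phi$, Theorem~\ref{thm:exist for bdd range}(b) together with Theorem~\ref{thm:tails} gives a positive discriminant for the induced scheme, and one then perturbs the transfer operator on the countable full shift and applies the implicit function theorem to the equation $P_G(-t\Phi-p\tau)=0$. You have simply written out the operator-perturbation details that the paper chooses to defer to \cite{BTeqnat}, and your identification of the positive discriminant (equivalently exponential tails, i.e.\ $p_F^*<0$) as the slack that makes the complex extension possible is exactly the point on which the paper's argument rests.
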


We will not supply a proof of the above theorem, since it follows rather
easily from \cite[Theorem 5]{BTeqnat}.  We will focus our attention on the
following related theorem dealing with the potential $-t \log |Df|$.
This potential is unbounded, except for $t = 0$.  We conclude that
$t \mapsto P(-t \log|Df|)$ is analytic near $t = 0$, which is
somewhat surprising as we do not require any of the summability conditions of the critical orbits of $f$ used in \cite{BTeqnat}.

\begin{theorem}
Let $f\in \H$.  There exist $t_1<0<t_2$ so that the map $t \mapsto P(-t\log|Df|)$ is analytic for $t\in (t_1,t_2)$.  In fact, for $t\in (t_1,t_2)$ there exists a unique equilibrium state with respect to the potential $-t\log|Df|$.
\label{thm:ana nat}
\end{theorem}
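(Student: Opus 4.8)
\medskip
\noindent\textbf{Sketch of a proof of Theorem~\ref{thm:ana nat}.}
The plan is to fix a single inducing scheme with full branches, adapted to the measure of maximal entropy, and to run analytic perturbation theory for the Gurevich pressure of the associated countable Markov shift, exactly as in \cite{BTeqnat} but now off the parameter $t=0$ rather than off $t=1$. Since $f\in\H$ is topologically mixing, $\htop(f)>0$, and the potential $\phi_0\equiv 0$ trivially satisfies $\sup\phi_0-\inf\phi_0<\htop(f)$, $V_n(\phi_0)\equiv 0$ and \eqref{SVI}. Theorem~\ref{thm:exist for bdd range} then produces, via Proposition~\ref{prop:induce}, an inducing scheme $(X,F,\tau)$ with $X\in\P_N$ to which the measure of maximal entropy $\mu_{\htop(f)}$ is compatible and whose inducing time has exponentially decaying tails; we fix this scheme. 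Each branch $F|_{X_i}=f^{\tau_i}|_{X_i}$ is a diffeomorphism onto $X$, and the scheme arises from first returns to a set $\hat X$ on the Hofbauer tower, so non-flatness of $\Crit$ together with the Koebe principle gives a uniform distortion bound $|DF(x)|/|DF(y)|\le K$ for $x,y\in X_i$, all $i$. Consequently the induced potential of $-t\log|Df|$, namely $\Phi_t(x):=-t\log|DF(x)|$, has exponentially decaying variations, $V_n(\Phi_t)\le |t|\,C\theta^n$ for some $\theta<1$ (Lemma~\ref{lem:Holder}), so $-t\log|Df|$ satisfies \eqref{SVI} for every $t$, even though it is unbounded on $I$.

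For $(t,p)$ near $(0,\htop(f))$ set $Q(t,p):=P_F(\Phi_t-p\tau)$, the Gurevich pressure of the full-branch (hence BIP) Markov system over $(X,F,\tau)$. The distortion bound gives $Q(t,p)=\log\sum_i\sup_{X_i}|DF|^{-t}e^{-p\tau_i}+O\!\big(\sum_nV_n(\Phi_t)\big)$; using $|DF|\ge\Lambda_0>1$ and $\sup_{X_i}|DF|\le K|X|/|X_i|$, together with the exponential tail of $\mu_{\htop(f)}$ on the scheme (which forces $\sum_i e^{-(\htop(f)-\delta)\tau_i}<\infty$ for some $\delta>0$), one checks that $Q$ is finite, and in fact jointly real-analytic, on a two-dimensional neighbourhood $U$ of $(0,\htop(f))$. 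At the base point $Q(0,\htop(f))=0$ by Abramov's formula (the lift $\mu_F$ of $\mu_{\htop(f)}$ is the equilibrium state of $-\htop(f)\tau$ for $F$), and $\partial_pQ(0,\htop(f))=-\int\tau\,d\mu_F<0$. The analytic implicit function theorem then yields $t_1<0<t_2$ and a real-analytic $t\mapsto p(t)$ on $(t_1,t_2)$ with $p(0)=\htop(f)$ and $Q(t,p(t))\equiv 0$; shrinking the interval we keep $(t,p(t))\in U$ and also arrange $Q(t,p(t)-\eps)<\infty$, so that Sarig's theory supplies a unique equilibrium state $\mu_{\Psi_t}$ for $\Psi_t:=\Phi_t-p(t)\tau$ on the induced system, with exponentially decaying $\tau$-tails and hence $\int\tau\,d\mu_{\Psi_t}<\infty$.

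It remains to identify $p(t)$ with $P(-t\log|Df|)$ and to upgrade uniqueness to the interval map. Projecting $\mu_{\Psi_t}$ to $\mu_t\in\M_+$ (Proposition~\ref{prop:induce}) and applying Abramov's formula with equality gives $h_{\mu_t}(f)-t\lambda(\mu_t)=p(t)$, so $P(-t\log|Df|)\ge p(t)$; the reverse inequality, and uniqueness, follow by the lifting arguments of \cite{BTeqnat}: every $\mu\in\M_+$ lifts to the transitive part of the Hofbauer tower containing $\hat X$, where $h_\mu(f)-t\lambda(\mu)-p(t)$ is at most the pressure of $\Phi_t-p(t)\tau$ on that part, which equals $Q(t,p(t))=0$. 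For $|t|$ small no equilibrium state can be supported on $\orb(\Crit)$ — all its Lyapunov exponents lie in $[0,\log\|Df\|_\infty]$ and its entropy is $0$, so it cannot realise $h_\mu(f)-t\lambda(\mu)=p(t)$ close to $\htop(f)$ — and, by Ruelle's inequality, any equilibrium state has $\lambda(\mu)\ge h_\mu(f)=p(t)+t\lambda(\mu)>0$ for $|t|$ small; hence every equilibrium state lies in $\M_+$, lifts to the chosen scheme, and therefore coincides with $\mu_t$. Together with the real-analyticity of $p(t)=P(-t\log|Df|)$ this gives the theorem.

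The main obstacle is the second paragraph: one must keep the Gurevich pressure $Q(t,p)$ finite, and real-analytic in $(t,p)$, on a genuine neighbourhood of $(0,\htop(f))$, \emph{despite} $-t\log|Df|$ being unbounded and \emph{without} imposing any growth condition on the critical orbits of $f$. The two ingredients that make this work are the bounded distortion of $F$ (which renders $\log|DF|$ tame on the scheme although $\log|Df|$ is not) and the exponential tail of the measure of maximal entropy on the scheme (which is precisely what keeps $Q$ finite when $p$ dips just below $\htop(f)$, and what is needed to push $\mu_{\Psi_t}$ back down to an $f$-invariant measure). The remaining delicate point — that \emph{all} equilibrium states lift to the one fixed scheme — is mitigated here because near $t=0$ the relevant dynamics is a small perturbation of the canonically induced measure of maximal entropy.
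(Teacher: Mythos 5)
Your proposal follows essentially the same path as the paper: induce on a scheme adapted to the measure of maximal entropy, use its exponential tails (obtained from the bounded-range condition applied to $\phi\equiv 0$) together with Koebe distortion of $\log|DF|$ to control the Gurevich pressure $P_G(\Phi_t - p\tau)$ near $(0,\htop(f))$, and invoke analytic perturbation theory for full-shift countable Markov systems to produce the analytic zero-curve $p(t)$, identified with $P(-t\log|Df|)$ by Abramov plus lifting. The paper delegates the implicit-function step to \cite{BTeqnat} and carries out the $t>0$ and $t<0$ estimates separately via H\"older's inequality and the lap-number bound, which your sketch compresses into ``one checks $Q$ is finite''; also, the exponential decay of $V_n(\Phi_t)$ comes from the bounded distortion of the induced map (as in \cite[Lemma 7]{BTeqnat}) rather than from Lemma~\ref{lem:Holder}, since $-\log|Df|$ is not H\"older near $\Crit$.
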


We next make a detailed study of an example
by Hofbauer and Keller \cite[pp32-33]{HKeq} which applies ideas from \cite{Hnonuni}.
They used it to show the importance of the condition \eqref{eq:bdd range}
for the quasi-compactness of the transfer operator.
We use the example to test the restrictions of the inducing scheme methods,
and we also show that \eqref{eq:bdd range} cannot simply be replaced by
H\"older continuity of the potential by proving the following proposition,
cf. \cite{Saphase}.

\begin{proposition} \label{prop:MP}
For $\alpha\in (0,1)$, consider the Manneville-Pomeau map
$f_\alpha:x \mapsto x+ x^{1+\alpha} \pmod 1$.
For any $b<-\log 2$,  there exists a H\"older potential with
$\sup \phi - \inf \phi = |b|$ and which  has the form
$\phi(x) = -2\alpha x^{\alpha}$ for $x$ close to $0$,
which has no equilibrium state accessible from an inducing scheme given by Proposition~\ref{prop:induce}.
\end{proposition}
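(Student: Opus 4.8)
The plan is to make $\phi$ \emph{constant}, equal to $-|b|$, on a large part of $[0,1]$, so that over the natural parabolic inducing scheme the induced potential has short‑loop weights exactly $e^{-m|b|}$; the sum $\sum_{m\ge 1}e^{-m|b|}=\frac1{e^{|b|}-1}$ is $<1$ \emph{precisely} when $|b|>\log 2=\htop(f_\alpha)$, i.e.\ exactly in the regime where the `bounded range' condition of Theorem~\ref{thm:exist for bdd range} fails, and this puts the induced system in the transient regime of Sarig's dichotomy, ruling out any equilibrium state reachable through Proposition~\ref{prop:induce}. I first recall the standard facts about $f_\alpha$: it has a neutral fixed point at $0$ ($f_\alpha'(0)=1$), it is a full two‑branch map with $\htop(f_\alpha)=\log 2$, the two branch domains meet at the point $x_0$ with $x_0+x_0^{1+\alpha}=1$, and the backward orbit $x_0>x_1>x_2>\cdots\to 0$ of $x_0$ along the parabolic branch satisfies $x_n\asymp n^{-1/\alpha}$, hence $x_n^\alpha\asymp(\alpha n)^{-1}$ and $\sum_{j=1}^{n}x_j^\alpha=\tfrac1\alpha\log n+O(1)$, with the usual bounded‑distortion estimates for Birkhoff sums of $\alpha$‑H\"older functions along these backward orbits.

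Next I construct $\phi$. Fix a small $\rho>0$, to be chosen at the end. Set $\phi(x):=-2\alpha x^\alpha$ for $x\in[0,\rho]$, let $\phi$ interpolate (e.g.\ piecewise linearly) from $-2\alpha\rho^\alpha$ down to $-|b|$ on $[\rho,2\rho]$, and set $\phi(x):=-|b|$ for $x\in[2\rho,1]$. For each fixed $\rho$ this $\phi$ is $\alpha$‑H\"older, with $\sup\phi=\phi(0)=0$ and (since $|b|>\log 2>2\alpha\rho^\alpha$) $\inf\phi=-|b|$, so $\sup\phi-\inf\phi=|b|$, and $\phi(x)=-2\alpha x^\alpha$ near $0$ as required. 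Note $\phi(0)=0$, so $\delta_0$ has free energy $h_{\delta_0}(f_\alpha)+\phi(0)=0$ and therefore $P(\phi)\ge 0$; also $\delta_0\notin\M_+$ since its Lyapunov exponent is $\log f_\alpha'(0)=0$.

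Now the key step. Use the first‑return map $F$ to $Y:=[x_0,1)$; this is one of the schemes of Proposition~\ref{prop:induce}, its branches are $Y_1=(f_\alpha|_Y)^{-1}([x_0,1))$ and $Y_m=(f_\alpha|_Y)^{-1}([x_{m-1},x_{m-2}))$ for $m\ge 2$, with $F|_{Y_m}=f_\alpha^m|_{Y_m}\colon Y_m\to Y$ monotone onto, so the associated countable Markov shift is a renewal (one loop of each length $m\ge 1$), and $\Phi|_{Y_m}=\sum_{k=0}^{m-1}\phi\circ f_\alpha^k\le 0$ has summable variations by Lemma~\ref{lem:Holder}. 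I claim $\sum_{m\ge 1}\sup_{Y_m}e^{\Phi}<1$ for $\rho$ small. Put $j_1:=\#\{j\ge 1:x_j\ge 2\rho\}$ and $j_2:=\#\{j\ge 1:x_j\ge\rho\}$; then $j_1\le j_2\le 2^\alpha j_1+O(1)$ and both tend to $\infty$ as $\rho\to 0$. For $1\le m\le j_1+1$ the whole orbit $y,f_\alpha(y),\dots,f_\alpha^{m-1}(y)$ of any $y\in Y_m$ lies in $[2\rho,1)$ (one has $f_\alpha^k(y)\in[x_{m-k},x_{m-k-1})$ with $m-k\le j_1$, and $Y\subset[2\rho,1)$), where $\phi\equiv-|b|$; hence $\Phi\equiv-m|b|$ on $Y_m$, $\sup_{Y_m}e^{\Phi}=e^{-m|b|}$, and this part contributes $\sum_{m=1}^{j_1+1}e^{-m|b|}<\tfrac1{e^{|b|}-1}$. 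For $m>j_1+1$ write $\Phi(y)=-m|b|+\sum_{k:\,m-k>j_1}\bigl(\phi(f_\alpha^k(y))+|b|\bigr)$; bounding the $\le j_2-j_1$ terms with $f_\alpha^k(y)\in[\rho,2\rho)$ by $|b|$, and for the terms with $f_\alpha^k(y)\in[0,\rho)$ using $\phi=-2\alpha x^\alpha$, $\sum_{j>j_2}x_j^\alpha=\tfrac1\alpha\log(m/j_2)+O(1)$ and bounded distortion, a routine computation gives $\sup_{Y_m}e^{\Phi}\le K e^{-|b|(j_1+1)}$ for $j_1+1<m\le j_2+1$ and $\sup_{Y_m}e^{\Phi}\le K e^{-|b|(j_1+1)}j_2^2\,m^{-2}$ for $m>j_2+1$, with $K=K(\alpha,b)$; summing, $\sum_{m>j_1+1}\sup_{Y_m}e^{\Phi}\le 4K\,j_2\,e^{-|b|j_1}\to 0$ as $\rho\to 0$. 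Since $|b|>\log 2$ gives $\tfrac1{e^{|b|}-1}<1$, choosing $\rho$ small enough yields $\sum_{m\ge 1}\sup_{Y_m}e^{\Phi}<1$.

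Finally the conclusion. Because $\sum_m(\sup_{Y_m}e^{\Phi})z^m<1$ for all $z\in[0,1]$, the renewal generating function never attains $1$, so $\Phi$ is transient at pressure $0$ and more generally its Gurevich pressure satisfies $P_G(\Phi-p\tau)=-p$, vanishing only at $p=0$; by the pressure equation of the inducing framework (cf.\ \cite{BTeqnat,Saphase}) this forces $P(\phi)=0$, attained by $\delta_0\notin\M_+$. Suppose $\phi$ had an equilibrium state $\mu$ accessible from some scheme of Proposition~\ref{prop:induce}, i.e.\ $\mu\in\M_+$. Then $h_\mu(f_\alpha)+\int\phi\,d\mu=P(\phi)=0$, and lifting to that scheme (with $\int\tau\,d\mu_F<\infty$) Abramov's formula gives $h_{\mu_F}(F)+\int\Phi\,d\mu_F=0$, so $\Phi$ is positive recurrent on the corresponding Markov shift; but the tail $\sup_{X_m}e^{\Phi}$ of any Proposition~\ref{prop:induce} scheme is governed by $\phi(x)=-2\alpha x^\alpha$ near $0$ exactly as above, hence $\sup_{X_m}e^{\Phi}\asymp C m^{-2}$ and $\int\tau\,d\mu_F\asymp\sum_m m\cdot m^{-2}=\sum_m m^{-1}=\infty$, a contradiction. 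Thus no equilibrium state for $\phi$ is accessible from an inducing scheme given by Proposition~\ref{prop:induce}, as claimed. The main obstacle is the estimate of the third paragraph: exhibiting a potential of the prescribed shape whose induced loop weights sum to less than $1$, and recognizing that this is possible exactly in the non‑bounded‑range regime $|b|>\log 2=\htop(f_\alpha)$ — the transparent part is that on $[2\rho,1)$ the potential is the constant $-|b|$, so short loops weigh $e^{-m|b|}$ and $\sum_{m\ge 1}e^{-m|b|}=\tfrac1{e^{|b|}-1}<1\iff|b|>\log 2$, while the remaining (and more delicate) point is showing that the long loops, the ones that actually feel the neutral fixed point, contribute a quantity tending to $0$ with $\rho$; a secondary issue is checking that the final contradiction does not depend on which Proposition~\ref{prop:induce} scheme $\mu$ is compatible with, which holds because the $m^{-2}$ tail is dictated by $\phi$ near $0$, not by the scheme.
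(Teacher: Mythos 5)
Your proposal takes essentially the same route as the paper's proof: the same family of potentials (equal to $-2\alpha x^\alpha$ near $0$, dropping to the constant $b=-|b|$ on the bulk of the interval, with the cutoff shrunk to tune the short-loop sum below $1$), the same first-return inducing scheme to a domain bounded away from the neutral fixed point, the same asymptotics $x_n\asymp n^{-1/\alpha}$ giving $\Phi|_{Y_m}\asymp \text{const}-2\log m$ and loop weights $\asymp m^{-2}$, and the same punch line that the resulting Gibbs measure on the induced system has $\int\tau\,d\mu_\Phi=\infty$. The paper gets $P(\phi)=0$ by plugging $\sum_n e^{s_n}\le 1$ into Hofbauer's table (Figure~\ref{fig1}); you instead try to read this off the inducing scheme, which is also viable, but two of your steps need repair.

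First, the identity $P_G(\Phi-p\tau)=-p$ is not correct. With loop weights $w_m=\sup_{Y_m}e^\Phi\asymp m^{-2}$ one has, up to distortion, $P_G(\Phi-p\tau)=\log\sum_m w_m e^{-pm}$, which is $<\log\sum_m w_m<0$ for every $p\ge 0$ and $=+\infty$ for every $p<0$; there is no $p$ at which $P_G(\Phi-p\tau)=0$ (this degeneracy is exactly what obstructs the scheme), but that is not the asserted formula. Second, the last paragraph's logic is inverted: from $h_{\mu_F}(F)+\int\Phi\,d\mu_F=0$ one cannot conclude ``$\Phi$ is positive recurrent'' and then refute it via the tail. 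Since $-\int\Phi\,d\mu_F\le|b|\int\tau\,d\mu_F<\infty$, the variational principle for the full countable shift (Theorem~\ref{thm:BIP}(b)) gives $0=h_{\mu_F}(F)+\int\Phi\,d\mu_F\le P_G(\Phi)=\log\sum_m w_m<0$, an immediate contradiction, with no detour through recurrence or the $m^{-2}$ tail. (Your remaining worry about scheme-independence is shared with the paper, which also checks the single scheme on $(y_1,1]$; the clean repair in both cases is the variational-principle inequality above, applied to whichever Proposition~\ref{prop:induce} scheme the putative $\mu\in\M_+$ is compatible with.) With these two points fixed your argument coincides with the paper's.
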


The remainder of this paper is organised as follows.
In Section~\ref{sec:inducing} we set out our main tools for generating inducing schemes and applying the theory of thermodynamic formalism.
Section~\ref{sec:tail} contains the
tail estimates of inducing schemes we use.
In Section~\ref{sec:exist for bdd range} we prove our main theorem on existence and uniqueness of equilibrium states.
In Section~\ref{sec:ana} we show that a consequence of our results is an analyticity result for the pressure, with respect to the kind of potentials considered in \cite{BTeqnat}. In Section~\ref{sec:examples} we give examples, including that in Proposition~\ref{prop:MP}, to show where these techniques break down.
Finally in Section~\ref{sec:recurrence} we discuss the recurrence implied by
compactness of the transfer operator, and we present conditions
implying the recurrence of the potential $\phi$.

{\em Acknowledgements:} We would like to thank Ian Melbourne,
Beno\^{\i}t Saussol, Godofredo Iommi, Sebastian van Strien and Neil Dobbs for fruitful discussions.  We would also like to thank the LMS for funding the visit of Saussol.  HB would like to thank CMUP for its hospitality. We also thank the referee for careful
reading and constructive comments.

\section{Equilibrium States via Inducing}
\label{sec:inducing}

\subsection{Inducing Schemes}\label{subsec:induce}
As in \cite{BTeqnat} we want to construct equilibrium state via inducing schemes.
We say that $(X,F,\tau)$ is an {\em inducing scheme} over $(I,f)$
if
\begin{itemize}
\item $X$ is an interval\footnote{Due to our assumption that $f$ is topological mixing, we can always find a single interval to induce on, but similar
theory works for $X$ a finite union of intervals.}
containing a (countable)
collection of disjoint intervals $X_i$ \st $F$ maps each $X_i$
homeomorphically onto $X$.
\item $F|_{X_i} = f^{\tau_i}$ for some $\tau_i \in \N := \{ 1,2,3 \dots \}$.
\end{itemize}
The function $\tau:\cup_i X_i \to \N$ defined by $\tau(x) =
\tau_i$ if $x \in X_i$, is called the {\em inducing time}.
It may happen that $\tau(x)$ is the first return time of $x$ to $X$, but
that is certainly not the general case.
Given an inducing scheme $(X,F, \tau)$, we say that a measure
$\mu_F$ is a \emph{lift} of $\mu$ if for all $\mu$-measurable subsets
$A\subset I$,
\begin{equation} \mu(A) = \frac1{\Lambda_{F,\mu}} \sum_i \sum_{k = 0}^{\tau_i-1}
\mu_F( X_i \cap f^{-k}(A)) \quad \mbox{ for } \quad
\Lambda_{F,\mu} := \int_X \tau \ d\mu_F. \label{eq:lift}
\end{equation}
Conversely, given a measure $\mu_F$ for $(X,F)$, we say that
$\mu_F$ \emph{projects} to $\mu$ if \eqref{eq:lift} holds.

Not every inducing scheme is relevant to every invariant measure.
Let $X^\infty = \cap_n F^{-n}(\cup_i X_i)$ is the set of
points on which all iterates of $F$ are defined. We call a measure
$\mu$ \emph{compatible} with the inducing scheme if
\begin{itemize}
\item $\mu(X)> 0$ and $\mu(X \setminus X^{\infty}) = 0$, and
\item there exists a measure $\mu_F$ which projects to $\mu$ by
\eqref{eq:lift}, and in particular $\Lambda_{F,\mu} < \infty$.
\end{itemize}

\subsection{The Hofbauer Tower}\label{subsec:Hofbauer tower}
Let $\P_n$ be the branch partition for $f^n$.  The canonical Markov extension (commonly called {\em Hofbauer
tower}) is a disjoint union of subintervals $D = f^n(\c_n)$,
$\c_n \in \P_n$, called {\em domains}. Let $\D$ be the collection of all such domains. For
completeness, let $\P_0$ denote the partition of $I$ consisting of
the single set $I$, and call $D_0 = f^0(I)$ the {\em base} of the
Hofbauer tower. Then
\[
\hat I = \sqcup_{n \ge 0} \sqcup_{\c_n \in \P_n} f^n(\c_n) / \sim,
\]
where $f^n(\c_n) \sim f^m(\c_m)$ if they represent the same
interval. Let $\pi: \hat I \to I$ be the inclusion map. Points
$\hat x\in \hat I$ can be written as $(x,D)$ if $D\in \D$ is the domain
that $\hat x$ belongs to and $x = \pi(\hat x)$. The map $\hat
f:\hat I \to \hat I$ is defined as
\[
\hat f(\hat x) = \hat f(x,D) = (f(x), D')
\]
if there are cylinder sets $\c_n \supset \c_{n+1}$ \st $x \in
f^n(\c_{n+1}) \subset f^n(\c_n) = D$ and $D' = f^{n+1}(\c_{n+1})$.
In this case, we write $D \to D'$, giving $(\D, \to)$ the
structure of a directed graph. It is easy to check that there is a
one-to-one correspondence between cylinder sets $\c_n \in \P_n$
and $n$-paths $D_0 \to \dots \to D_n$ starting at the base of the
Hofbauer tower and ending at some {\em terminal domain}
$D_n$.  If $R$ is the length of the shortest path from the base to
$D_n$, then the {\em level of $D_n$} is $\lev(D_n) = R$. Let $\hat
I_R = \sqcup_{\slev(D) \le R} D$.

Several of our arguments rely on the fact that the
``top'' of the infinite graph
$(\D,\to)$ generates arbitrarily small entropy.  These ideas go back to Keller \cite{Kellift}, see also \cite{Bumulti}.
It is also worth noting that the main information is contained in a
single transitive part of $\hat I$.

\begin{lemma}\label{lem:trans_subgraph}
If $I$ is a finite union of intervals, and the multimodal map $f:I
\to I$ is transitive, then there is a closed primitive subgraph
$(\E, \to)$ of $(\D, \to)$ containing a dense $\hat f$-orbit and
\st $I = \pi(\cup_{D \in \E} D)$.
\end{lemma}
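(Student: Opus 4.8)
The plan is to isolate a single communicating class $\E$ of the diagram $(\D,\to)$ which is \emph{absorbing} (every edge starting in $\E$ ends in $\E$) and whose domains already cover a nondegenerate interval; the three conclusions then follow with little effort. Indeed, strong connectivity of $(\E,\to)$ makes its edge shift topologically transitive, hence produces a point of $\cup_{D\in\E}D$ with dense $\hat f$-orbit. Absorption gives $\hat f(\cup_{D\in\E}D)\subseteq\cup_{D\in\E}D$, so $W:=\pi(\cup_{D\in\E}D)=\cup_{D\in\E}\pi(D)$ satisfies $f(W)\subseteq W$ (using $\pi\circ\hat f=f\circ\pi$); as $W$ contains a nondegenerate interval and a topologically mixing interval map is locally eventually onto, some forward iterate of that interval equals $I$ and lies in $W$, so $W=I$. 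Finally, topological mixing of $f$ rules out a nontrivial period for $(\E,\to)$, so it is primitive. (When $f$ is merely transitive one replaces $f$ by an appropriate power on each cyclically permuted component.)

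To produce $\E$ I would lift an invariant measure. A topologically mixing multimodal map with only hyperbolically repelling periodic orbits has $\htop(f)>0$, hence a measure of maximal entropy $\mu$; then $h_\mu(f)=\htop(f)>0$ gives $\lambda(\mu)\ge h_\mu(f)>0$ by Ruelle's inequality \cite{Ruelleineq}, and $\supp\mu\not\subseteq\orb(\Crit)$ since $\orb(\Crit)$ is countable and carries no positive-entropy measure; thus $\mu\in\M_+$. By Proposition~\ref{prop:induce}, $\mu$ lifts to an $\hat f$-invariant probability $\hat\mu$ on $\hat I$, which is ergodic and satisfies $h_{\hat\mu}(\hat f)=h_\mu(f)$. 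As $\hat\mu$ is a probability on the countable disjoint union of all domains, some domain $D^*$ has $\hat\mu(D^*)>0$; set $\E:=\{D\in\D:\hat\mu(D)>0\}$. By ergodicity of $\hat\mu$ and Poincaré recurrence, any two domains of $\E$ are joined by directed paths in both directions (a.e.\ orbit through one visits the other and returns), so $(\E,\to)$ is strongly connected and nontrivial.

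The heart of the matter is that $\E$ is absorbing. Let $D\in\E$ and $D\to D'$. From the definition of $\hat f$ one has $D'=f(D\cap J)$ for some branch $J\in\P_1$ with $D\cap J$ nondegenerate, and $\hat f$ carries the subinterval $B:=\{(x,D):x\in D\cap J\}$ of $D$ homeomorphically onto all of $D'$; by invariance, $\hat\mu(D')=\hat\mu(\hat f(B))=\hat\mu(\hat f^{-1}(\hat f(B)))\ge\hat\mu(B)$. Since $h_{\hat\mu}(\hat f)=\htop(f)$, the subsystem $\cup_{D\in\E}D$ realises the maximal entropy of $(\D,\to)$, so $\hat\mu$ is — modulo the standard coding — the Gurevich (maximal entropy) measure of the associated countable Markov shift and therefore has full support; using also that $f$ has no wandering intervals, so that the $N$-step subcylinders of each domain shrink to points as $N\to\infty$, we obtain $\hat\mu(B)>0$, and hence $D'\in\E$.

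The absorbing property is the step I expect to be the genuine obstacle: it amounts to knowing that the Markov diagram of a transitive map has an irreducible component terminal in the component digraph, which is precisely the structural phenomenon of Keller \cite{Kellift} (see also \cite{Bumulti}). The scheme above trades it for two standard facts — existence and full support of the Gurevich/maximal-entropy measure on the top irreducible component, and the absence of wandering intervals for $C^2$ non-flat multimodal maps — after which everything else is a routine dictionary between the combinatorics of $(\D,\to)$, the dynamics of $\hat f$, and the transitivity (mixing) of $f$ on $I$.
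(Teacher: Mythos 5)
Your architecture is sound and close in spirit to what I would expect: lift a positive-entropy measure, define $\E$ as the set of domains it charges, and read off strong connectivity, density, and the covering property. Steps that work: ergodicity of $\hat\mu$ (granting it) plus Poincar\'e recurrence does give that $(\E,\to)$ is strongly connected; forward invariance of $\pi(\cup_{D\in\E}D)$ together with the locally-eventually-onto property of a topologically mixing piecewise monotone map does force $\pi(\cup_{D\in\E}D)=I$ once absorption is known; and the reduction to a power of $f$ on the cyclic components handles the merely-transitive case.

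The real gap is exactly the one you flag, but your proposed repair is circular. To show $\E$ is absorbing you need, for every $D\in\E$ and every outgoing arrow $D\to D'$ coded by the subinterval $B=D\cap J$ of $D$, that $\hat\mu(B)>0$. Your justification is that $\hat\mu$ is the Gurevich (maximal entropy) measure of the countable Markov shift $(\E,\to)$ and hence has full support. But Gurevich's theory (and the Vere-Jones/RPF machinery behind it) applies to a \emph{transitive} countable Markov shift, and to know that $(\E,\to)$ is a transitive Markov shift you must already know that $\E$ is closed under $\to$ --- i.e.\ absorbing. You are assuming the conclusion to prove it. The ``no wandering intervals'' ingredient is fine for passing from full support in the symbolic model to full support in each interval $D$, but it does not supply full support in the first place. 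A correct argument has to establish the terminal-component structure of the Hofbauer diagram directly (this is the Hofbauer/Keller combinatorial input the paper delegates to \cite{BTeqnat}), or work with $\E':=\{D:\ D\subseteq\supp\hat\mu\}$ and the forward invariance $\hat f(\supp\hat\mu)\subseteq\supp\hat\mu$ --- but then one must separately prove $\E'\neq\emptyset$, which is again not free.

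Two smaller soft spots: (i) you assert that the lift $\hat\mu$ of an ergodic $\mu\in\M_+$ is ergodic; this is true here but is a nontrivial point about countable-to-one extensions that \cite{BrKel,Kellift} address and that should not be taken as automatic; (ii) your primitivity argument (``mixing rules out a nontrivial period for $(\E,\to)$'') is too quick, since $\pi$ is far from injective and a cyclic decomposition of $\cup_{D\in\E}D$ need not project to a cyclic decomposition of $I$; one needs to use that $\pi(\cup_{D\in\E}D)=I$ together with mixing more carefully. In short: keep the scaffolding, but the absorbing property needs an independent proof rather than an appeal to the very consequence it is meant to establish.
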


We denote the transitive part of the Hofbauer tower by $\hat I_{\mbox{\tiny trans}}$.
For details of the proof see \cite[Lemma 1]{BTeqnat}.

Let $i:I \to D_0$ be the trivial bijection (inclusion)
\st $i^{-1} = \pi|_{D_0}$. Given a probability measure $\mu$, let
$\hat \mu_0 := \mu \circ i^{-1}$, and
\begin{equation}
\hat{\mu}_n := \frac{1}{n}\sum_{k=0}^{n-1} \hat \mu_0
\circ\hat{f}^{-k}. \label{eqn:mulift}
\end{equation}
We say that $\mu$ is {\em liftable} to $(\hat I, \hat f)$ if there
exists a vague accumulation point $\hat \mu$ of the sequence
$\{\hat\mu_n\}_n$ with $\hat\mu\not\equiv 0$, see \cite{Kellift}. The following theorem is essentially proved there, see \cite{BrKel} for more details.

\begin{theorem}
Suppose that $\mu\in \M_+$.  Then $\hat\mu$ is an $\hat f$-invariant
probability measure on $\hat I$, and $\hat\mu\circ\pi^{-1} = \mu$.

Conversely, if $\hat\mu$ is $\hat f$-invariant and non-atomic,
then $\lambda(\hat\mu) > 0$.
\label{thm:Kell}\end{theorem}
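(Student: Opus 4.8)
The plan is to follow Keller's lifting argument \cite{Kellift} (with the details worked out as in \cite{BrKel}); the real content is the absence of escape of mass in the forward direction. Since $i(x)\in D_0$ and $\pi\circ i=\mathrm{id}$, we have $\hat\mu_0\circ\pi^{-1}=\mu$, and from $\pi\circ\hat f=f\circ\pi$ together with $f$-invariance of $\mu$ it follows that each average $\hat\mu_n$ in \eqref{eqn:mulift} again projects to $\mu$. Hence any vague accumulation point $\hat\mu$ of $\{\hat\mu_n\}_n$ satisfies $\hat\mu\circ\pi^{-1}\le\mu$ --- mass can only leak ``to infinity'' up the tower --- and $\hat f$-invariance of $\hat\mu$ follows from the telescoping identity $\hat\mu_n-\hat\mu_n\circ\hat f^{-1}=\tfrac1n(\hat\mu_0-\hat\mu_0\circ\hat f^{-n})$ tested against compactly supported continuous functions (legitimate as $\hat I$ is $\sigma$-compact). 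What remains is to show $\hat\mu\not\equiv0$ and, in fact, $\hat\mu(\hat I)=1$: since $\pi$ is onto, $\hat\mu(\hat I)=(\hat\mu\circ\pi^{-1})(I)$, and since $\mu$ is ergodic and $\hat\mu\circ\pi^{-1}\le\mu$ is $f$-invariant, $\hat\mu\circ\pi^{-1}=a\mu$ for some $a\in[0,1]$; the target is $a=1$.

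The heart of the argument is to control the level of the canonical lift $\hat f^n(i(x))$, which lies in the domain $D_n(x)=f^n(\c_n[x])$. A bounded-distortion (Koebe) argument --- here we use that $f$ is $C^2$ with non-flat critical points --- shows that whenever the orbit of $x$ accumulates a definite amount of expansion over a block of iterates ending at time $n$, the interval $f^n(\c_n[x])$ has length bounded below, and then $D_n(x)$ lies at level at most a fixed $R$. This is exactly where $\lambda(\mu)>0$ does its work (rather than merely some statement about how often the orbit avoids $\Crit$): by Birkhoff's theorem $\tfrac1n\log|Df^n(x)|\to\lambda(\mu)>0$ for $\mu$-a.e.\ $x$, so long blocks of iterates do accumulate net expansion $\approx\lambda(\mu)\cdot(\text{length of block})$ in spite of the contracting steps near $\Crit$; and $\supp\mu\not\subset\orb(\Crit)$ rules out the degenerate case in which the orbit stays bound to the critical orbit forever and the cylinders never recover. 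Putting this together, for $\mu$-a.e.\ $x$,
\[
\liminf_{n\to\infty}\ \frac1n\,\#\bigl\{\,0\le k<n:\ \hat f^k(i(x))\in\hat I_R\,\bigr\}\ \ge\ c(R),\qquad\text{where }c(R)\to1\text{ as }R\to\infty.
\]
Since $\int_I\bigl(\tfrac1n\#\{0\le k<n:\hat f^k(i(x))\in\hat I_R\}\bigr)\,d\mu(x)=\hat\mu_n(\hat I_R)$ by \eqref{eqn:mulift}, Fatou's lemma gives $\liminf_n\hat\mu_n(\hat I_R)\ge c(R)$, and passing to the vague limit (replacing $\hat I_R$ by its closure, which is harmless) yields $\hat\mu(\hat I_R)\ge c(R)$ for every $R$. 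In particular $\hat\mu\not\equiv0$, and $\hat\mu(\hat I)\ge\sup_R c(R)=1$, so $\hat\mu(\hat I)=1$, $a=1$, and $\hat\mu\circ\pi^{-1}=\mu$. Finally, an $\hat f$-invariant probability measure $\hat\mu$ with $\hat\mu\circ\pi^{-1}=\mu$ is the lift, and it is unique and ergodic since $\pi$ is a factor map.

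For the converse, let $\hat\mu$ be $\hat f$-invariant and non-atomic; we want $\lambda(\hat\mu):=\int\log|Df|\circ\pi\,d\hat\mu>0$. Here one uses that the Markov structure of the transitive part $\hat I_{\mbox{\tiny trans}}$ (Lemma~\ref{lem:trans_subgraph}) supports a uniformly expanding induced map on a compact set bounded away from $\pi^{-1}(\Crit)$; since every domain reaches that set in finitely many steps, $\hat\mu$ descends to an invariant measure for this induced map on a set of positive $\hat\mu$-measure, and a uniformly expanding map has only positive-exponent invariant measures, so $\lambda(\hat\mu)>0$ by Abramov's formula. A measure for which this fails would be carried by $\orb(\Crit)$, hence atomic --- a contradiction. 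This is the content of \cite{Kellift}; see \cite{BrKel}. The main obstacle throughout is the quantitative level-control in the middle paragraph --- turning ``$\lambda(\mu)>0$'' into ``the canonical lift returns to a fixed finite level with density at least $c(R)$'', and, crucially, obtaining $c(R)\to1$ and not merely $c(R)>0$, since it is this that upgrades $\hat\mu\not\equiv0$ to the absence of escape. The delicate point there is the bounded-distortion control of the inverse branches of $f^n$ restricted to cylinders that carry no definite Koebe space; that is where $C^2$ regularity and non-flatness of the critical set enter essentially.
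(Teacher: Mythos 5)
The paper does not actually prove Theorem~\ref{thm:Kell}: it is stated as imported from Keller's lifting paper \cite{Kellift}, with the caveat ``see \cite{BrKel} for more details.'' So there is no in-paper argument to compare against; your proposal is a blind reconstruction of a cited theorem. Your overall architecture is the right one --- define $\hat\mu_n$, get $\hat f$-invariance of a vague accumulation point by telescoping, reduce everything to non-escape of mass, and handle the converse by inducing to a first-return system on a compactly contained set.

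The genuine gap is in what you call the heart of the argument. You assert that if the orbit of $x$ accumulates a definite amount of expansion over a block ending at time $n$, then $|f^n(\c_n[x])|$ is bounded below, hence $D_n(x)=f^n(\c_n[x])$ lies at bounded level. That implication does not follow from Koebe plus Birkhoff. Accumulated derivative growth of order $e^{n\lambda}$ controls the \emph{ratio} $|D_n|/|\c_n[x]|$ but says nothing about $|D_n|$ itself, because $|\c_n[x]|$ can shrink faster than $|Df^n(x)|$ grows. What actually determines $|D_n|$ is the position of $f^n(x)$ relative to $\orb(\Crit)\cup\partial I$, since the endpoints of a domain $D_n$ lie in the forward orbit of the critical values. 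So a point can have $\lambda(x)>0$ while the cylinder $\c_n[x]$ keeps getting truncated by nearby precritical points, and your deduction ``expansion $\Rightarrow |D_n|$ bounded below'' fails as stated. The correct route (roughly what Keller and Bruin--Keller do) needs a careful selection of ``good'' times at which both a definite Koebe space is available \emph{and} the orbit is away from the critical values --- typically a Pliss/hyperbolic-times argument plus a separate estimate controlling the total measure of high-level domains --- and as you note, this is the delicate part; it cannot be dispatched with the one-line Koebe remark. A second, smaller issue: you aim for $c(R)\to 1$ in order to force $a=1$ in $\hat\mu\circ\pi^{-1}=a\mu$, but that is more than needed. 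Since $\mu$ is ergodic and $\hat\mu\circ\pi^{-1}\le\mu$ is $f$-invariant, $a\in[0,1]$; once $a>0$ one simply normalises $\hat\mu$ to a probability measure with $\hat\mu\circ\pi^{-1}=\mu$. Getting $c(R)>0$ for one fixed $R$ already suffices, and trying to push $c(R)\to 1$ out of a Pliss-type lemma (which gives a positive but not near-one density of hyperbolic times) would be a further, avoidable difficulty. The converse you sketch --- induce to a first-return map on a compactly contained $\hat X$, observe it is uniformly expanding, apply Abramov --- is essentially correct, though you should note that $\hat\mu(\hat I_R)>0$ for some $R$ (hence the first return map is $\hat\mu$-a.e.\ defined) is automatic for any nonzero $\hat f$-invariant measure, rather than deducing it from ``every domain reaches $\hat X$.''
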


The strategy followed in \cite{BTeqnat} is to take the first return map to appropriate set
in the Hofbauer tower of $(I,f)$ and to use the same inducing time for the projected partition on the interval.
Saying that an induced system $(X,F,\tau)$  corresponds to a first return map
$(\hat X, \hat F, \tau)$ on the Hofbauer tower means that
if $\hat x \in \hat X \subset \hat I$, then $\tau \circ \pi$
is the first return time of $\hat x$ under $\hat f$ to $\hat X$.

\subsection{Pressure and Recurrence}\label{subsec:pressure}

A topological, \ie measure independent, way to define
pressure was presented in \cite{walters}; with respect to
the branch partition $\P_1$, it is defined as
\[
P_{top}(\phi) := \lim_{n\to\infty} \frac1n \log
\sum_{\c_n \in \P_n} \sup_{x \in \c_n} e^{\phi_n(x)},
\]
where $\phi_n(x) := \sum_{k=0}^{n-1} \phi \circ f^k(x)$.
We say that the Variational Principle holds if $P(\phi) = P_{top}(\phi)$.
If $\phi$ has sufficiently controlled distortion, then
the sum of $\sup_{x \in \c_n} e^{\phi_n(x)}$ over
all $n$-cylinders can be replaced by
the sum of $e^{\phi_n(x)}$ over all $n$-periodic points, and thus we arrive
at the {\em Gurevich pressure} w.r.t. cylinder set $\c \in \P_1$.
\begin{equation*}\label{eq:Gur}
P_G(\phi) := \limsup_{n \to \infty} \frac1n \log Z_n(\phi, \c)
\quad \mbox{ for } \quad
Z_n(\phi, \c) := \sum_{f^n x = x}e^{\phi_n(x)}1_{\c}(x).
\end{equation*}
If $(I,f)$ is topologically mixing and
\begin{equation}\label{eq:beta}
\beta_n(\phi) := \sup_{\c_n\in \P_n} \sup_{x,y \in
\c_n}|\phi_n(x)-\phi_n(y)|=o(n),
\end{equation}
then
$P_G(\phi)$ is independent of the choice of $\c \in \P_1$, as was shown in \cite{FFY}.

Since the branch partition is finite, potentials with bounded variations are
bounded, and hence their Gurevich pressure is finite.
If $\phi$ is unbounded above (whence $P_{top}(\phi) = \infty$) or the number of
$1$-cylinders is infinite
(as may be the case for induced maps $F$ and induced potential $\Phi$), Gurevich pressure proves its usefulness.

Suppose that $(I,f,\phi)$ is topologically mixing.  For every $\c
\in \P_1$ and $n\ge 1$, recall that we defined
\[
Z_n(\phi, \c):=\sum_{f^n x = x}e^{\phi_n(x)}1_{\c}(x).
\]
Let
\[
Z_n^*(\phi, \c) := {\sum_{\stackrel{f^nx=x,}{f^kx \notin \c \ \mbox{\tiny for}\ 0< k < n}} e^{\phi_n(x)}} 1_{\c}(x).
\]
The potential $\phi$ is said to be {\em recurrent} if\footnote{The convergence of this series is independent of the cylinder set $\c$, so we suppress it in the notation.}
\begin{equation}\label{eq:recurrence}
\sum_n \lambda^{-n} Z_n(\phi) = \infty \mbox{ for } \lambda = \exp
P_G(\phi).
\end{equation}
Moreover, $\phi$ is called \emph{positive recurrent} if it is
recurrent and  $\sum_n n\lambda^{-n}Z^*_n(\phi) < \infty$.

In some cases we will use the quantity
\begin{equation}\label{eq:Z0}
Z_0(\phi):=\sum_{\c\in \P_1} \sup_{x\in \c} e^{\phi(x)}.
\end{equation}
Proposition 1 of \cite{Sathesis} implies that if $\phi$ has summable variations then for any $\c$,  $Z_n(\phi,\c)=O(Z_0(\phi)^n)$.  Hence  $Z_0(\phi)<\infty$ implies $P_G(\phi)<\infty$.

Although we do not assume that the potential $\phi$ has summable variations,
it is important that the induced potential $\Phi$
has summable variations, as we want to apply the following
result which collects the main theorems of \cite{SaBIP}.
We give a simplified version of the original result since we
assume that each branch of the induced system $(X,F)$ is onto $X$.
We refer to such a system as a \emph{full shift}.

\begin{theorem} If $(X,F,\Phi)$ is a full shift and $\sum_{n\ge 1}V_n(\Phi)<\infty$, then
$\Phi$ has an invariant Gibbs measure if and only if
$P_G(\Phi)<\infty$.  Moreover the Gibbs measure $\mu_\Phi$ has the
following properties.
\begin{itemize}
\item[(a)] If $h_{\mu_\Phi}(F) < \infty$ or $-\int \Phi d\mu_\Phi
< \infty$ then $\mu_\Phi$ is the unique equilibrium state (in
particular, $P(\Phi) = h_{\mu_\Phi}(F) + \int_X \Phi~d\mu_\Phi$);
\item[(b)] The Variational Principle holds, \ie
$P_G(\Phi)=P(\Phi)$.
\end{itemize}
\label{thm:BIP}
\end{theorem}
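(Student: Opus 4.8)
The plan is to recognise Theorem~\ref{thm:BIP} as a specialisation of Sarig's thermodynamic formalism for countable Markov shifts with the big-images-and-preimages (BIP) property; the work is to check that our hypotheses place us inside that framework and then to transcribe the relevant theorems of \cite{SaBIP} (with \cite{Sathesis,Saphase}), dealing by hand only with the bookkeeping about infinite entropy and infinite integrals. The structural observation that makes the framework apply is that, since each branch $F|_{X_i}=f^{\tau_i}$ maps $X_i$ homeomorphically \emph{onto} all of $X$, the natural coding of $(X,F)$ by the index set $\{i\}$ is conjugate to a one-sided full shift on that alphabet, which is topologically mixing and trivially has the BIP property. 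If the alphabet is finite we are in the classical Ruelle theory of subshifts of finite type and every assertion below is standard, so assume it is countably infinite. Finally, $\sum_{n\ge1}V_n(\Phi)<\infty$ is exactly the summable-variations hypothesis required by the generalised Ruelle--Perron--Frobenius (RPF) theory, and it forces \eqref{eq:beta} for $\Phi$, so $P_G(\Phi)$ is well defined and independent of the reference cylinder (cf.\ \cite{FFY}).

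For the equivalence ``$\Phi$ has an invariant Gibbs measure $\iff P_G(\Phi)<\infty$'' the forward direction is soft: if $\mu_\Phi$ is an invariant Gibbs measure with Gibbs constant $P$, then summing the two-sided Gibbs bounds over all $n$-cylinders of $F$ (every one of which occurs, as $F$ is a full shift) against $\mu_\Phi(X)=1$ shows $\sum_{\c_n}\sup_{\c_n}e^{\Phi_n}\asymp e^{Pn}$; comparison with periodic-orbit sums via \cite[Prop.~1]{Sathesis} and \cite{FFY} then gives $P_G(\Phi)=P<\infty$. For the converse I would invoke Sarig's RPF theorem directly: under BIP, $\sum_nV_n(\Phi)<\infty$ and $P_G(\Phi)<\infty$, the potential $\Phi$ is positive recurrent, the transfer operator $\L_\Phi$ has a conformal eigenmeasure $\nu$ (i.e.\ $\L_\Phi^{*}\nu=\lambda\nu$ with $\lambda=e^{P_G(\Phi)}$) together with a positive eigenfunction $h$ bounded away from $0$ and $\infty$, and then $d\mu_\Phi:=(\int h\,d\nu)^{-1}\,h\,d\nu$ is the required $F$-invariant Gibbs measure --- the BIP property being precisely what upgrades the one-sided bound coming from conformality of $\nu$ to the two-sided Gibbs bound.

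For part (b) I would then quote Sarig's variational principle for topologically mixing countable Markov shifts with a summable-variations potential of finite Gurevich pressure,
\[
P_G(\Phi)=\sup\Big\{\, h_\mu(F)+\int\Phi\,d\mu \ :\ \mu\ F\text{-invariant},\ \int\Phi\,d\mu>-\infty \,\Big\}=P(\Phi),
\]
which is exactly (b). For part (a), the Gibbs property identifies the Jacobian of $\mu_\Phi$ with respect to $F$ on $X_i$ as $e^{P_G(\Phi)-\Phi}(h\circ F)/h$, so Rokhlin's formula yields $h_{\mu_\Phi}(F)=P_G(\Phi)-\int\Phi\,d\mu_\Phi$ as soon as one of $-\int\Phi\,d\mu_\Phi<\infty$ or $h_{\mu_\Phi}(F)<\infty$ holds (each forcing the other, since $\log(h\circ F/h)$ is bounded); with (b) this makes $\mu_\Phi$ an equilibrium state. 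Uniqueness is then the Buzzi--Sarig uniqueness theorem for positive recurrent potentials with summable variations on topologically mixing BIP shifts: any equilibrium state is absolutely continuous with respect to $\nu$ and hence coincides with $\mu_\Phi$ (see \cite{SaBIP,BuSar}).

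The hard part, were one to prove this rather than cite it, is the generalised RPF theorem itself --- existence of the conformal eigenmeasure $\nu$, of a bounded, bounded-away-from-zero eigenfunction $h$, and of the spectral gap that yields positive recurrence --- which is exactly where the BIP hypothesis is indispensable, since without it one has an RPF theorem only \emph{conditional} on positive recurrence, and positive recurrence can genuinely fail. Accordingly we will simply cite \cite{SaBIP}. The only genuinely delicate point on our side is the infinite-entropy/infinite-integral bookkeeping: the Gibbs measure $\mu_\Phi$ exists as soon as $P_G(\Phi)<\infty$, but without a further integrability assumption it need not be an equilibrium state, and the identity $P(\Phi)=h_{\mu_\Phi}(F)+\int\Phi\,d\mu_\Phi$ need not even be meaningful --- which is precisely why the hypotheses of (a) are imposed, and where one must be careful to use them.
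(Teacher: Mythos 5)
The paper offers no proof of Theorem~\ref{thm:BIP}: it is stated explicitly as a simplified collection of the main theorems of \cite{SaBIP}, specialised to the full-shift (hence trivially BIP) setting, which is precisely what you do. Your proposal is correct and takes essentially the same route, merely elaborating why the citation applies (full shift $\Rightarrow$ BIP, summable variations, the RPF/Rokhlin bookkeeping for part (a)).
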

Note that an $F$-invariant measure $\mu$ is a
\emph{Gibbs measure} w.r.t. potential $\Phi$
if there is $K \ge 1$ such that
for every $n \ge 1$, every $n$-cylinder set $\c_n$ and every $x \in \c_n$
\[
\frac1K \le \frac{ \mu(\c_n) }{ e^{\Phi_n(x) - nP_G(\Phi)} } \le K.
\]

Using this theory, the following was proved in \cite{BTeqnat}.
\begin{proposition}\label{prop:all_indu}
Suppose that $\psi$ is a potential with $P_G(\psi)=0$.  Let $\hat X$ be the set used Proposition~\ref{prop:induce} to construct the corresponding inducing scheme
$(X,F,\tau)$.  Suppose that the lifted potential $\Psi$ has $P_G(\Psi)<\infty$ and $\sum_{n\ge 1}V_n(\Psi)<\infty$.

Consider the assumptions:
\begin{itemize}
\item[(a)] $\sum_i\tau_i e^{\Psi_i}<\infty$ for
$\Psi_i := \sup_{x \in X_i} \Psi(x)$;
\item[(b)] there exists an equilibrium state
$\mu\in \M_+$ compatible with $(X,F,\tau)$;
\item[(c)]
there exist a sequence
$\{\eps_n\}_n\subset \R^-$ with
$\eps_n \to 0$ and measures
$\{\mu_n\}_n\subset \M_+$ \st every $\mu_n$ is compatible with $(X,F,\tau)$,
$h_{\mu_n}(f)+\int\psi~d\mu_n \ge \eps_n$ and
$P_G(\Psi_{\eps_n})<\infty$ for all $n$;
\end{itemize}
If any of the following combinations of assumptions holds:
\[
\left\{
\begin{array}{ll}
1. & \mbox{(a) and (b)}; \\
2. & \mbox{(a) and (c)};
\end{array} \right.
\]
then there is a unique equilibrium state $\mu$ for $(I,f,\psi)$
among measures $\mu \in \M_+$ with $\hat\mu(\hat X)>0$. Moreover, $\mu$ is obtained by projecting the equilibrium state $\mu_\Psi$ of the inducing scheme and we have $P_G(\Psi)=0$.
\end{proposition}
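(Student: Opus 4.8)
The plan is to prove Proposition~\ref{prop:all_indu} by reducing everything to Theorem~\ref{thm:BIP}. The key normalisation is $P_G(\psi)=0$, so that the induced potential $\Psi$ for the inducing scheme $(X,F,\tau)$ plays the role of the potential in Theorem~\ref{thm:BIP}: since $(X,F)$ is a full shift and $\sum_n V_n(\Psi)<\infty$ by hypothesis, and $P_G(\Psi)<\infty$ by hypothesis, we obtain an invariant Gibbs measure $\mu_\Psi$ for $\Psi$. First I would check that $P_G(\Psi)=0$. The inequality $P_G(\Psi)\ge 0$ should follow from the fact that $\psi$ has zero Gurevich pressure and from the relation between periodic orbits of $f$ and of $F$ via Abramov's formula (or a direct counting of periodic points with $\tau$-weights); the reverse inequality $P_G(\Psi)\le 0$ comes from the observation that if $P_G(\Psi)>0$, then one could build $F$-invariant measures $\mu_F$ with $h_{\mu_F}(F)+\int\Psi\,d\mu_F>0$ and finite $\int\tau\,d\mu_F$ (using assumption (a) to control the tail, hence $\Lambda_{F,\mu}<\infty$), project them by \eqref{eq:lift}, and use Abramov's formula $h_\mu(f)+\int\psi\,d\mu = \frac{1}{\Lambda_{F,\mu}}(h_{\mu_F}(F)+\int\Psi\,d\mu_F)$ to contradict $P_G(\psi)=P(\psi)=0$; here I would invoke the Variational Principle for $\psi$, which needs $\beta_n(\psi)=o(n)$ — this should be available from the standing hypotheses on $\phi$ elsewhere in the paper, or from $\sum V_n(\Psi)<\infty$ together with the structure of the scheme.

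Next I would handle integrability. Assumption (a), $\sum_i \tau_i e^{\Psi_i}<\infty$, gives both that $\int \tau\,d\mu_\Psi<\infty$ (so the projection $\mu$ of $\mu_\Psi$ is a well-defined $f$-invariant probability measure, and by Proposition~\ref{prop:induce} lies in $\M_+$ with $\hat\mu(\hat X)>0$) and that $-\int\Psi\,d\mu_\Psi<\infty$, so that part (a) of Theorem~\ref{thm:BIP} applies: $\mu_\Psi$ is \emph{the} unique equilibrium state for $(X,F,\Psi)$ and $P(\Psi)=h_{\mu_\Psi}(F)+\int_X\Psi\,d\mu_\Psi$. Combined with part (b) of Theorem~\ref{thm:BIP} this gives $P(\Psi)=P_G(\Psi)=0$, and Abramov then yields $h_\mu(f)+\int\psi\,d\mu=0=P(\psi)$, so $\mu$ is an equilibrium state for $(I,f,\psi)$.

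For uniqueness among measures $\mu'\in\M_+$ with $\hat\mu'(\hat X)>0$: any such $\mu'$ is, by Proposition~\ref{prop:induce}, compatible with the inducing scheme, so it lifts to an $F$-invariant measure $\mu'_F$ with $\int\tau\,d\mu'_F<\infty$; Abramov again gives $h_{\mu'}(f)+\int\psi\,d\mu' = \frac{1}{\Lambda_{F,\mu'}}(h_{\mu'_F}(F)+\int\Psi\,d\mu'_F)$. If $\mu'$ is an equilibrium state for $\psi$ the left side is $0$, hence $\mu'_F$ is an equilibrium state for $\Psi$ (its free energy equals $P(\Psi)=0$), so by uniqueness in Theorem~\ref{thm:BIP}(a) we get $\mu'_F=\mu_\Psi$ and therefore $\mu'=\mu$. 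In case~1 (assumptions (a) and (b)) this is essentially complete once one notes that the hypothesised equilibrium state in (b) is automatically compatible with the scheme. In case~2 (assumptions (a) and (c)) existence of an equilibrium state in $\M_+$ is not given a priori, so one argues that the approximating measures $\mu_n$ have free energies converging to $0=P_+(\psi)$, lifts them, and uses the finiteness of $P_G(\Psi_{\eps_n})$ together with the tail control to pass to a limit — the limiting $F$-invariant measure has full free energy and finite $\int\tau$, hence projects to the desired equilibrium state, which must then coincide with the projection of $\mu_\Psi$.

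The main obstacle I expect is the clean verification of $P_G(\Psi)=0$ together with the exchange of pressures between the base system and the induced system — in particular making Abramov's formula rigorous at the level of pressures (the inequality $P(\psi)\ge \frac{1}{\Lambda}(h_{\mu_F}(F)+\int\Psi\,d\mu_F)$ for \emph{every} admissible $\mu_F$, and the matching sup), and ensuring that the approximation argument in case~2 does not lose mass in the tower (i.e. that the limit measure still satisfies $\hat\mu(\hat X)>0$ and $\int\tau\,d\mu_F<\infty$). This is where the finiteness conditions $P_G(\Psi_{\eps_n})<\infty$ and assumption (a) do the real work, and where I would need to be careful; the rest is a fairly mechanical application of Theorem~\ref{thm:BIP} and Proposition~\ref{prop:induce}.
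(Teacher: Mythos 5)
A preliminary remark: the paper contains no proof of Proposition~\ref{prop:all_indu}; it is quoted from \cite{BTeqnat} (``Using this theory, the following was proved in [BrT]''). So the comparison is with the argument of that reference, whose outline you have correctly reconstructed: apply Theorem~\ref{thm:BIP} to $(X,F,\Psi)$ to obtain a Gibbs measure $\mu_\Psi$; use (a) and the Gibbs property $\mu_\Psi(X_i)\asymp e^{\Psi_i-P_G(\Psi)}$ to get $\int\tau\,d\mu_\Psi<\infty$ (and $-\int\Psi\,d\mu_\Psi<\infty$, which strictly speaking uses $-\Psi_i\le\tau_i\sup(-\psi)$, i.e.\ boundedness of $\psi$ from below); project by \eqref{eq:lift}; move free energies up and down with Abramov's formula; and obtain uniqueness by lifting any competing equilibrium state with $\hat\mu(\hat X)>0$ and invoking uniqueness in Theorem~\ref{thm:BIP}(a). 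All of that is sound.

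The genuine gap is in your treatment of $P_G(\Psi)\ge 0$, which is precisely the job of hypotheses (b) and (c) and is the only place they enter. Your suggestion that this inequality ``should follow from a direct counting of periodic points with $\tau$-weights'' is not available: the induced system only sees orbits returning to $X$, and Section~\ref{sec:examples} exhibits exactly the failure mode where the scheme does not capture the equilibrium state; without (b) or (c) there is no a priori lower bound on $P_G(\Psi)$ in terms of $P_G(\psi)$. Under (b) the bound is immediate (lift the compatible equilibrium state to $\mu_F$; Abramov gives $h_{\mu_F}(F)+\int\Psi\,d\mu_F=\Lambda_{F,\mu}\cdot 0=0$, so $P_G(\Psi)=P(\Psi)\ge 0$). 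Under (c), however, your plan to ``pass to a limit'' of the lifted measures does not work: on the non-compact shift $(X,F)$ weak-$*$ limits can lose mass, neither $h_\cdot(F)$ nor $\int\tau\,d(\cdot)$ is continuous, and the limit need not be compatible. The correct mechanism --- and the reason $P_G(\Psi_{\eps_n})<\infty$ appears in (c) --- works at the level of the pressure function $S\mapsto P_G(\Psi-S\tau)$ rather than of measures: each lifted $\mu_{F,n}$ satisfies
\[
h_{\mu_{F,n}}(F)+\int(\Psi-\eps_n\tau)\,d\mu_{F,n}
=\Lambda_n\Bigl(h_{\mu_n}(f)+\int\psi\,d\mu_n-\eps_n\Bigr)\ge 0,
\]
and since $P_G(\Psi_{\eps_n})<\infty$ the variational principle of Theorem~\ref{thm:BIP}(b) applies to $\Psi_{\eps_n}=\Psi-\eps_n\tau$, giving $P_G(\Psi-\eps_n\tau)\ge 0$ for every $n$. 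The map $S\mapsto P_G(\Psi-S\tau)$ is convex and finite on a neighbourhood of $0$ (finite at $S=\eps_n<0$ by hypothesis and decreasing in $S$), hence continuous there, so letting $\eps_n\uparrow 0$ yields $P_G(\Psi)\ge 0$; no limit of measures is taken. With $P_G(\Psi)=0$ established (the upper bound coming, as you say, from projecting the Gibbs measure itself, the one measure whose $\tau$-integrability (a) actually guarantees), the remainder of your argument goes through.
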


In the remaining part of this section, we give some technical results which connect different ways of computing pressure and Gurevich pressure.

We use the following theorem of \cite{FFY} to show the connection between $P_G(\hat\phi)$ and $P_+(\phi)$.
\begin{theorem}
If $(\Omega,S)$ be a transitive Markov shift and $\psi:\Omega\to \R$
is a continuous function satisfying $\beta_n(\psi)=o(n)$ then $P_G(\psi)=P(\psi)$.
\label{thm:FFY}
\end{theorem}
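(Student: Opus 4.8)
This is the Fiebig--Fiebig--Yuri variational principle for transitive (countable) Markov shifts, and I would obtain it by proving $P(\psi)\le P_G(\psi)$ and $P_G(\psi)\le P(\psi)$ separately, adapting the classical compact argument. In both directions the hypothesis $\beta_n(\psi)=o(n)$ is used in exactly one way: it lets one replace $\psi_n$ at a point of an $n$-cylinder by its value at another point of the same cylinder at a cost of $o(n)$, so the weights $e^{\psi_n}$ are essentially constant on cylinders on the exponential scale. Fix a state $a$ of the shift and let $\c=[a]$; this is harmless since $\beta_n(\psi)=o(n)$ makes $P_G(\psi)$ independent of the chosen cylinder (cf.\ \cite{FFY}).

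\emph{From periodic orbits to measures: $P_G(\psi)\le P(\psi)$.} An $n$-periodic point $x\in\c$ is the periodic extension of a loop of length $n$ based at $a$ in the transition graph, and any \emph{finite} family $\mathcal L=\{x_1,\dots,x_m\}$ of such loops concatenates freely, giving an embedding of the full shift on the alphabet $\mathcal L$ into $(\Omega,S^n)$. Pushing forward the Bernoulli measure on $\mathcal L^{\N}$ that weights $x_i$ by $p_i:=e^{\psi_n(x_i)}/\sum_j e^{\psi_n(x_j)}$ and averaging over the $n$ shifts $S^0,\dots,S^{n-1}$ produces an ergodic $S$-invariant $\mu_{\mathcal L}$ with $h_{\mu_{\mathcal L}}(S)=\tfrac1n\bigl(-\sum_i p_i\log p_i\bigr)$ and $\int\psi\,d\mu_{\mathcal L}=\tfrac1n\sum_i p_i\psi_n(x_i)+o(1)$, the error from $\beta_n(\psi)/n$. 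Since $-\log p_i=\log\sum_j e^{\psi_n(x_j)}-\psi_n(x_i)$, these combine to $h_{\mu_{\mathcal L}}(S)+\int\psi\,d\mu_{\mathcal L}=\tfrac1n\log\sum_{x_i\in\mathcal L}e^{\psi_n(x_i)}+o(1)$. As $\mathcal L$ is finite, $\mu_{\mathcal L}$ is admissible in the supremum defining $P(\psi)$; taking the supremum over $\mathcal L$ gives $P(\psi)\ge\tfrac1n\log Z_n(\psi,\c)+o(1)$, and letting $n\to\infty$ along a subsequence realising the $\limsup$ in the definition of $P_G(\psi)$ yields the inequality.

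\emph{From measures to periodic orbits: $P(\psi)\le P_G(\psi)$.} Let $\mu$ be ergodic and $S$-invariant with $-\int\psi\,d\mu<\infty$, and fix $\eps>0$. By the Shannon--McMillan--Breiman theorem for the Markov partition together with the Birkhoff ergodic theorem (equivalently, Katok's entropy formula), for arbitrarily large $n$ one finds at least $e^{n(h_\mu(S)-\eps)}$ distinct $n$-cylinders, each inside $\c$, each of $\mu$-measure at most $e^{-n(h_\mu(S)-\eps)}$, and each containing a point $y$ with $\psi_n(y)\ge n(\int\psi\,d\mu-\eps)$. By transitivity a fixed connecting word of length $C$ turns a positive proportion of these into loops at $a$, i.e.\ into $\ge c\,e^{n(h_\mu(S)-\eps)}$ periodic points of period $n+C$ in $\c$, and $\beta_{n+C}(\psi)=o(n)$ keeps $\psi_{n+C}$ at each such point above $n(\int\psi\,d\mu-\eps)-o(n)$. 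Summing yields $Z_{n+C}(\psi,\c)\ge e^{n(h_\mu(S)+\int\psi\,d\mu-2\eps)-o(n)}$, so $P_G(\psi)\ge h_\mu(S)+\int\psi\,d\mu-2\eps$; letting $\eps\downarrow0$ and taking the supremum over $\mu$ finishes the argument.

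The genuine obstacle is the ``closing-up'' in this second inequality: on a \emph{countable}, hence non-compact, Markov shift one must manufacture honest periodic points from the typical $n$-cylinders of an \emph{arbitrary} invariant measure, losing only a sub-exponential factor in their number and only $o(n)$ in the Birkhoff sums, while the connecting words that transitivity supplies have finite but a priori unbounded length. I would make this rigorous by first approximating $\mu$ internally --- at a further cost of $\eps$ in both entropy and integral --- by an ergodic measure carried on a finite topologically mixing subshift of finite type $\Omega'\subset\Omega$, applying the \emph{classical} variational principle $P_{top}=P_G$ on $\Omega'$ (legitimate because $\beta_n(\psi|_{\Omega'})=o(n)$ gives the distortion control needed to count periodic orbits of an SFT), and finally passing back to $\Omega$ via the approximation property $P_G(\psi)=\sup_{\Omega'}P_G(\psi|_{\Omega'})$ over finite subsystems. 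The first direction above is essentially a soft measure-building argument; it is this finite-approximation bookkeeping that carries the weight of the proof.
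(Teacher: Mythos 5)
The paper does not prove this statement: it is quoted verbatim from Fiebig--Fiebig--Yuri \cite{FFY} and carries no proof in the text, so there is no ``paper's own argument'' for you to match. On the merits of your sketch: the direction $P_G(\psi)\le P(\psi)$ is essentially right and standard --- push a Bernoulli measure through the free semigroup of loops at $a$ and compare free energy to $\tfrac1n\log\sum e^{\psi_n}$ up to $\beta_n(\psi)/n$. The direction $P(\psi)\le P_G(\psi)$ is where the real content lies, and your proposal has a genuine gap there.

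You correctly notice that the first draft with ``a fixed connecting word of length $C$'' cannot work on a countable alphabet (connecting words have unbounded length), but the patch you offer does not close the gap: the claim that an arbitrary ergodic $\mu$ with $-\int\psi\,d\mu<\infty$ can be ``approximated internally, at a further cost of $\eps$ in both entropy and integral, by an ergodic measure carried on a finite transitive SFT $\Omega'\subset\Omega$'' is itself a nontrivial statement about countable shifts, essentially as strong as the inequality you are trying to prove. (The \emph{topological} approximation $P_G(\psi)=\sup_{\Omega'}P_G(\psi|_{\Omega'})$ over finite subsystems is elementary because periodic orbits live in finite subgraphs; the \emph{measure-theoretic} internal approximation of $h_\mu+\int\psi\,d\mu$ is not, and you cannot invoke it for free.) The standard way around this avoids connecting words entirely: fix a symbol $a$ with $\mu([a])>0$, and restrict attention to SMB/Birkhoff-typical $n$-cylinders whose first \emph{and} $n$-th coordinate is $a$. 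Such a cylinder closes up directly to an $n$-periodic point in $[a]$ with no connecting word, its $\psi_n$-value changes by at most $\beta_n(\psi)=o(n)$, and Khinchine's recurrence theorem ($\limsup_n\mu([a]\cap S^{-n}[a])\ge\mu([a])^2$) guarantees that infinitely many $n$ admit $\gtrsim e^{n(h_\mu-\eps)}$ such cylinders; the $\limsup$ in the definition of $P_G$ absorbs the restriction to this subsequence. That gives $P_G(\psi)\ge h_\mu(S)+\int\psi\,d\mu-2\eps$ without any auxiliary approximation lemma, and completes the argument.
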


\begin{corollary}
If $\beta_n(\hat\phi) = o(n)$, and $\hat\phi$ is continuous in the symbolic metric on $(\hat I, \hat f)$ then $P_G(\hat \phi)=P_+(\phi)$. \label{cor:var prin}
\end{corollary}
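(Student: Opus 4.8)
The plan is to combine the variational principle for transitive countable Markov shifts (Theorem~\ref{thm:FFY}) with the lifting correspondence for the Hofbauer tower (Theorem~\ref{thm:Kell}). By Lemma~\ref{lem:trans_subgraph} the transitive part $\hat I_{\mbox{\tiny trans}}$ of $(\hat I,\hat f)$ is a transitive countable Markov shift with $\pi(\hat I_{\mbox{\tiny trans}})=I$, and $P_G(\hat\phi)$ is by convention computed there. Since $\hat\phi=\phi\circ\pi$ is assumed continuous in the symbolic metric and satisfies $\beta_n(\hat\phi)=o(n)$, Theorem~\ref{thm:FFY} applies to $(\hat I_{\mbox{\tiny trans}},\hat f,\hat\phi)$ and gives $P_G(\hat\phi)=P(\hat\phi)$, where $P(\hat\phi)=\sup\{h_{\hat\mu}(\hat f)+\int\hat\phi\,d\hat\mu\}$ is the variational pressure over $\hat f$-invariant probability measures on $\hat I_{\mbox{\tiny trans}}$. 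The proof then reduces to showing $P(\hat\phi)=P_+(\phi)$, \ie to matching the invariant measures on the two sides.

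The inequality $P_+(\phi)\le P(\hat\phi)$ is the easy one: given $\mu\in\M_+$, Theorem~\ref{thm:Kell} (together with the construction behind Proposition~\ref{prop:induce}) provides an $\hat f$-invariant lift $\hat\mu$ on $\hat I_{\mbox{\tiny trans}}$ with $\hat\mu\circ\pi^{-1}=\mu$; since $\pi$ restricts to a measure-theoretic isomorphism between $(\hat I_{\mbox{\tiny trans}},\hat f,\hat\mu)$ and $(I,f,\mu)$, a standard feature of the Hofbauer tower, cf.\ \cite{BrKel,Kellift}, one has $h_{\hat\mu}(\hat f)=h_\mu(f)$, while $\int\hat\phi\,d\hat\mu=\int\phi\,d\mu$ is immediate, so $h_\mu(f)+\int\phi\,d\mu\le P(\hat\phi)$ and taking the supremum over $\M_+$ gives the claim. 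For the reverse inequality one runs the correspondence backwards: an ergodic $\hat f$-invariant measure $\hat\mu$ on $\hat I_{\mbox{\tiny trans}}$ projects to $\mu=\hat\mu\circ\pi^{-1}$, again with $h_\mu(f)=h_{\hat\mu}(\hat f)$ and $\int\phi\,d\mu=\int\hat\phi\,d\hat\mu$, so it only remains to see that $\mu\in\M_+$. If $\hat\mu$ is non-atomic this is automatic: $\lambda(\mu)=\lambda(\hat\mu)>0$ by the converse part of Theorem~\ref{thm:Kell}, and $\supp\mu$ is uncountable, hence not contained in the countable set $\orb(\Crit)$. If $\hat\mu$ is atomic it sits on a periodic orbit, which for $f\in\H$ is hyperbolically repelling; one then argues that such orbits in $\hat I_{\mbox{\tiny trans}}$ project off $\orb(\Crit)$, or, alternatively, that the supremum defining $P(\hat\phi)$ is already approached by non-atomic (indeed positive-entropy) measures --- for instance by replacing $\hat\mu$ by $(1-t)\hat\mu+t\hat\rho$ with $\hat\rho$ the lift of the measure of maximal entropy (which exists since $f$ is topologically mixing) and $t>0$ small, using that $\hat\mu\mapsto h_{\hat\mu}(\hat f)+\int\hat\phi\,d\hat\mu$ is affine.

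The essential input is thus Theorem~\ref{thm:FFY}; the obstacle to watch is the bookkeeping for the projection $\pi$, namely that it neither loses entropy on the support of a lifted measure nor sends the invariant measures of $\hat I_{\mbox{\tiny trans}}$ outside $\M_+$ (the latter being the only place the repellingness of periodic points and the size of the transitive part of the tower are really used). Both facts are part of the standard Keller--Bruin--Keller theory of the Hofbauer tower, and I would simply quote them rather than reprove them here.
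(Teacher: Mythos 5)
Your proof follows the paper's own argument essentially step for step: restrict to the transitive subgraph $\hat I_{\mbox{\tiny trans}}$ given by Lemma~\ref{lem:trans_subgraph}, apply Theorem~\ref{thm:FFY} to get $P_G(\hat\phi)=P(\hat\phi)$, then use Theorem~\ref{thm:Kell} together with entropy preservation under the countable-to-one factor $\pi$ (the paper cites \cite{DS}) to identify $P(\hat\phi)$ with $P_+(\phi)$ by lifting and projecting near-optimal sequences of measures. Your additional care about whether the projection of an atomic $\hat\mu$ lands in $\M_+$ is a point the paper itself leaves implicit; for what it is worth, your first patch (that periodic orbits in $\hat I_{\mbox{\tiny trans}}$ project off $\orb(\Crit)$) can fail when a critical point is pre-periodic to such an orbit, and the convex combination $(1-t)\hat\mu+t\hat\rho$ is non-ergodic and so falls outside $\M_+\subset\M_{erg}$ as defined --- but since the paper's proof does not address this step at all, this is a refinement rather than a deviation.
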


\begin{proof}
We show that the system $(\hat I_{\mbox{\tiny trans}}, \hat f,\hat \phi)$
satisfies the conditions of Theorem~\ref{thm:FFY}, where
$\hat I_{\mbox{\tiny trans}}$ is given below Lemma~\ref{lem:trans_subgraph}.
For $\hat x,\hat y \in \hat P$ with $\hat P \in \hat \P_n$, we have
 $|\hat\phi_n(\hat x) - \hat\phi_n(\hat y)|= o(n)$,
and Theorem~\ref{thm:FFY} implies $P_G(\hat\phi)=P(\hat\phi)$.

It remains to show that $P(\hat \phi) = P_+(\phi)$.  By Theorem~\ref{thm:Kell}, any measure in $\M_+$ lifts to $\hat I$.  We also know that a countable-to-one factor map preserves entropy, provided the Borel sets are preserved by lifting, see \cite{DS}.  For similar arguments, see \cite{Bumulti}.
Suppose that $\{\hat\mu_n\}_n$ is a sequence of $\hat f$-invariant
measures \st $h_{\hat\mu_n}(f)+\int\hat\phi~d\hat\mu_n \to P(\hat\phi)$
as $n \to \infty$.
Then for the projections $\mu_n=\hat\mu_n\circ\pi^{-1}$, $h_{\mu_n}(f)+\int\phi~d\mu_n \to P(\hat\phi)$ also.
So $P_+(\phi) \ge P(\hat\phi)$.
On the other hand,  let $\{\mu_n\}_n \subset \M_+$ be a sequence of
measures \st $h_{\mu_n}(f)+\int\phi~d\mu_n \to P_+(\phi)$ as $n \to \infty$.
Lifting these measures using Theorem~\ref{thm:Kell},
we get $h_{\hat\mu_n}(f)+\int\hat\phi~d\hat\mu_n \to P_+(\phi)$, so $P_+(\phi) \le P(\hat\phi)$ as required. \end{proof}

We next show that Gurevich pressure can be computed from cylinders of any order.

\begin{lemma}
Let $(\Omega, f)$ be a topologically mixing Markov shift.
If $\phi:\Omega \to \R$ satisfies $\beta_n(\phi)=o(n)$,
then $P_G(\phi,\c)=P_G(\phi,\c')$ for any two cylinders $\c,\c'$ of any order.
\label{lem:small cyl}
\end{lemma}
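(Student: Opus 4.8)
The plan is to prove the two inequalities $P_G(\phi,\c') \ge P_G(\phi,\c)$ and $P_G(\phi,\c) \ge P_G(\phi,\c')$; it suffices to establish one of them, say the first, the other following by exchanging the roles of $\c$ and $\c'$. Fix cylinders $\c = [a_0 \cdots a_{m-1}]$ and $\c' = [b_0 \cdots b_{m'-1}]$, which we may take to be nonempty, so these are admissible words. The idea, which extends the order-one case treated in \cite{FFY}, is to splice periodic orbits meeting $\c$ into periodic orbits meeting $\c'$ by inserting fixed connecting blocks. Since a topologically mixing Markov shift is in particular transitive, we may fix once and for all an admissible block $\omega_1$ with $b_{m'-1}\,\omega_1\,a_0$ admissible and an admissible block $\omega_2$ with $a_0\,\omega_2\,b_0$ admissible, and we set $s := m' + |\omega_1| + |\omega_2|$, a constant.

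Given $n'$ and a periodic point $x$ with $f^{n'}x = x$ and $x \in \c$, form the cyclic word
\[
W(x)\ :=\ b_0\, b_1 \cdots b_{m'-1}\ \omega_1\ a_0\, x_1\, x_2 \cdots x_{n'-1}\ \omega_2
\]
of length $n'+s$; it spells a loop in the transition graph (the only transition not immediate from the choices above is $x_{n'-1} \to x_0 = a_0$, which is admissible because $x$ is $f^{n'}$-periodic), so it defines a point $y(x)$ with $f^{n'+s}y(x) = y(x)$ and $y(x) \in \c'$. The assignment $x \mapsto y(x)$ is injective, since the prefix $b_0 \cdots b_{m'-1}\,\omega_1$ and the suffix $\omega_2$ are the same for every $x$, so the block $a_0 x_1 \cdots x_{n'-1}$ — hence $x$ itself — can be recovered from $W(x)$. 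For the weights, split $\phi_{n'+s}(y(x))$ into the sum over the $n'$ consecutive coordinates carrying the block $a_0 x_1 \cdots x_{n'-1}$, along which the corresponding shift of $y(x)$ lies in the same $n'$-cylinder as $x$, plus the sum over the remaining $s$ ``connecting'' coordinates. The first part is at least $\phi_{n'}(x) - \beta_{n'}(\phi)$; each term of the second part is $\phi$ evaluated at a point whose first few coordinates are prescribed by $b_\bullet$, $\omega_1$ or $\omega_2$, and since $\beta_n(\phi) = o(n)$ forces $V_j(\phi) < \infty$ for every $j$ (e.g.\ $V_j(\phi) \le \beta_j(\phi) + (j-1)V_1(\phi)$), the second part is bounded below by a constant $-C$ depending only on $m'$, $\omega_1$, $\omega_2$. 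Thus $\phi_{n'+s}(y(x)) \ge \phi_{n'}(x) - \beta_{n'}(\phi) - C$, and summing over all $x$ counted by $Z_{n'}(\phi,\c)$ yields
\[
Z_{n'+s}(\phi,\c')\ \ge\ e^{-\beta_{n'}(\phi) - C}\, Z_{n'}(\phi,\c).
\]

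Applying $\tfrac{1}{n'+s}\log(\cdot)$ to both sides and letting $n' \to \infty$, the contributions $\tfrac{C}{n'+s} \to 0$ and $\tfrac{\beta_{n'}(\phi)}{n'+s} \le \tfrac{\beta_{n'}(\phi)}{n'} \to 0$ by the hypothesis $\beta_n(\phi) = o(n)$, while $\tfrac{n'}{n'+s} \to 1$, so taking $\limsup$ gives $P_G(\phi,\c') \ge P_G(\phi,\c)$, the argument being valid whether this quantity is finite or $+\infty$. Exchanging $\c$ and $\c'$ in the construction gives $P_G(\phi,\c) \ge P_G(\phi,\c')$, hence equality. The one step requiring care is the combinatorial bookkeeping of the splice: one must simultaneously guarantee that $y(x)$ genuinely lies in $\c'$ and is $f^{n'+s}$-periodic, that $x \mapsto y(x)$ is injective, and — most importantly — that passing from $\phi_{n'}(x)$ to $\phi_{n'+s}(y(x))$ costs only $o(n')$, which is exactly where $\beta_n(\phi) = o(n)$ (for the long central block) together with the finiteness of the $V_j(\phi)$ (for the fixed connecting block) is used.
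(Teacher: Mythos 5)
Your proof takes essentially the same approach as the paper's: use transitivity to fix connecting blocks, splice a given periodic orbit through $\c$ into a slightly longer periodic orbit through $\c'$, observe the splicing map is injective, compare weights using $\beta_{n'}(\phi)$ on the long block and a fixed constant on the short connectors, and conclude by symmetry. (The paper phrases the connectors as paths $D \to \cdots \to D'$ in the Hofbauer-tower graph; you phrase them as admissible words — same thing.)

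There is one slip in the construction of $W(x)$. As written, the letter following $x_{n'-1}$ is the \emph{first letter of} $\omega_2$, and the admissibility of the transition $x_{n'-1} \to (\text{first letter of } \omega_2)$ is not guaranteed by your choices: what you actually know is $x_{n'-1} \to a_0$ (from $f^{n'}$-periodicity of $x$) and that $a_0\,\omega_2\,b_0$ is admissible, but a Markov shift does not let you compose these two one-step transitions into the transition $x_{n'-1} \to (\text{first letter of } \omega_2)$. Your parenthetical remark — ``the only transition not immediate \ldots is $x_{n'-1} \to x_0 = a_0$'' — correctly identifies the transition that must be checked, but that transition does not appear in the word you wrote down. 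The fix is trivial: use
\[
W(x)\ :=\ b_0\cdots b_{m'-1}\ \omega_1\ a_0\,x_1\cdots x_{n'-1}\ a_0\ \omega_2,
\]
of length $n' + s + 1$. Now $x_{n'-1} \to a_0$ is admissible by periodicity and $a_0 \to \omega_2 \to b_0$ by the choice of $\omega_2$, injectivity still holds (strip the fixed prefix $b_0 \cdots b_{m'-1}\omega_1$ and fixed suffix $a_0\omega_2$), and the rest of the estimate is unchanged. With this repair the argument is correct. One point in your favor over the paper's write-up: you bound $\phi$ on the connecting coordinates via $V_1(\phi)<\infty$ on the finitely many $1$-cylinders involved, whereas the paper invokes $\sup\phi$ and (implicitly) $\inf\phi$, which need not be finite on a general countable Markov shift.
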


\begin{proof} Denote the Markov partition of $\hat I$ into domains $D$
by $\D$.
Take $D, D' \in \D$ such that $\c \subset D$ and $\c' \subset D'$.
By transitivity, there is a $k$-path $\c \subset D \to \dots \to D'$ and a $k'$-path $\c' \subset D' \to \dots \to D$.
Then for every $n$-periodic point $x \in \c$, there is a
point $x' \in \c'$ such that $f^{k'}(x') \in \c_n[x]$, the $n$-cylinder
containing $x$. Therefore $f^{k'+n}(x') \in \c$ and  $f^{k'+n+k}(x') \in x'$.
It follows that
$e^{\phi_{n+k+k'}(x')} \le e^{\beta_n + (k+k') \sup \phi} e^{\phi_n(x)}$,
whence
\[
Z_n(\phi,\c) \ge e^{-\beta_n - (k+k')\sup \phi} Z_{n+k+k'}( \phi, \c').
\]
Therefore, using $\beta_n = o(n)$, we obtain for the exponential growth rate
$P_G(\phi,\c) \ge \lim_n \frac{\beta_n}{n} + P_G(\phi, \c') = P_G(\phi, \c')$.
Reversing the roles of $\c$ and $\c'$ yields $P_G(\phi,\c) = P_G(\phi, \c')$.
\end{proof}

\subsection{Summable Variations for the Inducing Scheme (SVI)}
\label{subsec:SVI}

In this section we give conditions on $\phi$ and under which (SVI) holds
for the inducing scheme.

\begin{lemma}
\begin{itemize}
\item[(a)] If
\begin{equation*}\label{eq:sumn}
\sum_n n V_n(\phi) < \infty;
\end{equation*}
then \eqref{SVI} holds with respect to any inducing scheme.
\item[(b)]
Let $\phi$ be $\alpha$-H\"older continuous and let $(X,F,\tau)$
be an inducing scheme obtained from Proposition~\ref{prop:induce}
that satisfies
\begin{equation}\label{eq:sumalpha}
\sup_i \sum_{k=0}^{\tau_i-1}|f^k(X_i)|^\alpha<\infty,
\end{equation}
Then \eqref{SVI} holds w.r.t. that inducing scheme.
\end{itemize}
\label{lem:sum induced}\end{lemma}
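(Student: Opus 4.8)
\emph{Proof strategy.} The plan is to estimate, for each $n$, the $n$-th variation $V_n(\Phi)=\sup_{\c_n}\sup_{x,y\in\c_n}|\Phi(x)-\Phi(y)|$ of the induced potential over the cylinders $\c_n=[i_0,\dots,i_{n-1}]$ of the inducing scheme (so $\c_n\subset X_{i_0}$, and $\tau\equiv\tau_{i_0}$ on $X_{i_0}$), and then to sum over $n$. Write $T=\sum_{j=0}^{n-1}\tau_{i_j}$ for the total inducing time along $\c_n$; it is constant on $\c_n$ and $T\ge n+\tau_{i_0}-1$. The observation common to both parts is that, since each branch $F|_{X_i}=f^{\tau_i}$ is monotone, $X_i$ lies in a single element of $\P_{\tau_i}$; chaining this along the itinerary $i_0,i_1,\dots,i_{n-1}$ shows that for $x,y\in\c_n$ the points $f^m(x)$ and $f^m(y)$ lie in the same element of $\P_1$ for every $0\le m<T$, hence $f^k(x)$ and $f^k(y)$ lie in the same element of $\P_{T-k}$ for each $0\le k<\tau_{i_0}$. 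Since $\Phi(x)-\Phi(y)=\sum_{k=0}^{\tau_{i_0}-1}\big(\phi(f^kx)-\phi(f^ky)\big)$, I obtain the master inequality
\[
|\Phi(x)-\Phi(y)|\ \le\ \sum_{k=0}^{\tau_{i_0}-1}\big|\phi(f^kx)-\phi(f^ky)\big|.
\]

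For part (a) I would bound the $k$-th term by $V_{T-k}(\phi)$. As $k$ runs over $\{0,\dots,\tau_{i_0}-1\}$ the index $T-k$ runs over the $\tau_{i_0}$ consecutive integers $T-\tau_{i_0}+1,\dots,T$, all $\ge n$ because $T-\tau_{i_0}+1=\sum_{j=1}^{n-1}\tau_{i_j}+1\ge n$. As the $V_l(\phi)$ are non-negative and non-increasing, $\sum_{k=0}^{\tau_{i_0}-1}V_{T-k}(\phi)\le\sum_{l\ge n}V_l(\phi)$, a bound independent of the cylinder, so $V_n(\Phi)\le\sum_{l\ge n}V_l(\phi)$. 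Summing, $\sum_{n\ge1}V_n(\Phi)\le\sum_{n\ge1}\sum_{l\ge n}V_l(\phi)=\sum_{l\ge1}l\,V_l(\phi)<\infty$ by hypothesis, which gives \eqref{SVI}.

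For part (b) the idea is to use the Hölder estimate $|\phi(a)-\phi(b)|\le C_\phi|a-b|^\alpha$, so that the $k$-th term is at most $C_\phi|f^k(\c_n)|^\alpha\le C_\phi|f^k(X_{i_0})|^\alpha$; the second bound alone only gives $V_n(\Phi)\le C_\phi\sup_i\sum_{k=0}^{\tau_i-1}|f^k(X_i)|^\alpha$, finite but $n$-independent, so one must exploit that $\c_n$ is tiny compared with $X_{i_0}$. Here I would invoke the distortion and expansion properties built into the inducing schemes of Proposition~\ref{prop:induce}: the branches, together with their intermediate iterates, have uniformly bounded distortion — applying this to $f^{\tau_{i_0}-k}$ on $f^k(X_{i_0})$, which maps $f^k(\c_n)$ onto $F(\c_n)$ and $f^k(X_{i_0})$ onto $X$, gives some $D$ with $|f^k(\c_n)|\le D\,\tfrac{|F(\c_n)|}{|X|}\,|f^k(X_{i_0})|$ — and the cylinders shrink exponentially, $|F(\c_n)|=|\c_{n-1}|\le C\theta^{\,n-1}|X|$ for some $\theta\in(0,1)$. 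With $M:=\sup_i\sum_{k=0}^{\tau_i-1}|f^k(X_i)|^\alpha<\infty$ the master inequality then yields, uniformly over $\c_n$,
\[
|\Phi(x)-\Phi(y)|\ \le\ C_\phi D^\alpha\Big(\tfrac{|\c_{n-1}|}{|X|}\Big)^{\alpha}\sum_{k=0}^{\tau_{i_0}-1}|f^k(X_{i_0})|^\alpha\ \le\ C_\phi D^\alpha C^\alpha M\,\theta^{\alpha(n-1)},
\]
so $V_n(\Phi)$ decays exponentially and $\sum_nV_n(\Phi)<\infty$.

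I expect the main obstacle to be part (b), precisely because it does not close up from the stated hypotheses alone: one needs that the branches of the inducing scheme have uniformly bounded distortion for all intermediate times (where the Koebe-space construction behind Proposition~\ref{prop:induce} enters) and that its cylinders have uniformly exponentially small diameter. These are structural properties of the scheme rather than consequences of the Hölder or summability assumptions on $\phi$, so the argument rests on quoting them in the displayed form above; part (a), by contrast, is a self-contained combinatorial estimate.
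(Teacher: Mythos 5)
Your proof is correct, and part (a) is genuinely different from the paper's argument. The paper proves (a) by invoking \cite[Lemma 3, Part 1]{Sathesis}, which is stated for first return maps, and then has to justify its applicability by observing that the inducing schemes of Proposition~\ref{prop:induce} are isomorphic to first return maps on the Hofbauer tower with $\hat\phi=\phi\circ\pi$ producing the same induced potential. Your argument instead establishes the bound directly: since $X_{i_0}$ lies in a single element of $\P_{\tau_{i_0}}$ and the itinerary chains, for $x,y$ in the $n$-cylinder of $(X,F)$ the iterates $f^k(x),f^k(y)$ share an element of $\P_{T-k}$, and the $\tau_{i_0}$ indices $T-k$ are distinct integers $\ge n$, so monotonicity of $V_l(\phi)$ gives $V_n(\Phi)\le\sum_{l\ge n}V_l(\phi)$ and Fubini finishes it. This is a cleaner, self-contained combinatorial derivation that does not rely on the first-return structure or the Hofbauer-tower isomorphism, and it really does apply to ``any inducing scheme'' as the lemma states, whereas the paper's citation route nominally needs Proposition~\ref{prop:induce} to justify Sarig's hypotheses. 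Your part (b) is essentially the paper's argument: the paper bounds $|f^k(x)-f^k(y)|\le K|f^k(X_i)|\,|x-y|/|X_i|$ by distortion of $f^k:X_i\to f^k(X_i)$ and then lets $|x-y|/|X_i|$ decay exponentially with the order of the $F$-cylinder, while you apply distortion to $f^{\tau_{i_0}-k}:f^k(X_{i_0})\to X$ and let $|F(\c_n)|/|X|$ decay; these are the same Koebe estimate read in the two directions. You are also right that the structural inputs for (b) — uniform Koebe distortion for the intermediate iterates of the branches, and exponential decay of $F$-cylinder diameters — are properties of the schemes from Proposition~\ref{prop:induce} rather than consequences of the hypotheses on $\phi$; the paper uses exactly these in the same implicit way.
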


\begin{proof}
To prove (a), we apply \cite[Lemma 3, Part 1]{Sathesis}.  Note that the results in the chapter of \cite{Sathesis} containing this result  are valid if $(X,F,\tau)$ is a first return map, which is not true for our case. However, from Proposition~\ref{prop:induce}, we constructed $(X,F,\tau)$ to be isomorphic to a
first return map on the Hofbauer tower, with potential $\hat \phi
= \phi \circ \pi$. Since $\Phi(x) = \sum_{k = 0}^{\tau(x)-1} \phi
\circ f^k(x) = \sum_{k = 0}^{\tau(x)-1} \hat \phi \circ \hat
f^k(\hat x)$ for each $\hat x \in \pi^{-1}(x)$, both the original
system and the lift to the Hofbauer tower lead to the same induced
potential. Therefore \cite[Lemma 3, Part 1]{Sathesis} does indeed apply.

Now to prove (b), note that $F:\cup_i X_i \to X$ is extendible, $f^{\tau_i-k}:f^k(X_i) \to X$
has bounded distortion for each $0 \le k < \tau_i$.
Consequently, also $f^k:X_i \to f^k(X_i)$ has bounded distortion.
Suppose that $|\phi(x)-\phi(y)|\le C_\phi|x-y|^\alpha$.  Since $\Phi(x) = \sum_{k = 0}^{\tau_i-1} \phi \circ f^k(x)$
for $x \in X_i$, we get for $x,y \in X_i$.
\begin{eqnarray*}
|\Phi(x) - \Phi(y)| &\le&
\sum_{k=0}^{\tau_i-1} |\phi \circ f^k(x) -  \phi \circ f^k(y)| \\
&\le&
\sum_{k=0}^{\tau_i-1} C_\phi |f^k(x) - f^k(y)|^\alpha \\
&\le&
\sum_{k=0}^{\tau_i-1} C_\phi K |f^k(X_i)|^\alpha \cdot \left(\frac{|x - y|}{|X_i|}\right)^\alpha
\end{eqnarray*}
where $K$ is the relevant Koebe constant for $F$.
Thus the condition in (b) implies that the variation $V_1(\Phi)$ is bounded.  Because $F$ is uniformly expanding, the diameter of $n$-cylinders of $F$
decreases
exponentially fast, so if $x$ and $y \in X_i$ belong to the same
$n$-cylinder,  the above estimate is exponentially small in $n$, and
summability of variations follows.
\end{proof}

The following lemma gives conditions on $f$, under which condition (b) can be used for H\"older potentials\footnote{In Lemma~\ref{lem:Holder} we take inducing schemes on a union of intervals.  As in Section~\ref{subsec:induce}, transitivity implies that this result passes to any single sufficiently small interval.}.
We say that $c \in \Crit$ has critical order $\ell_c$ if
there is a constant $C \ge 1$ \st $\frac1C |x-c|^{\ell_c} \le
|f(x)-f(c)| \le C |x-c|^{\ell_c}$ for all $x$;
$f$ is {\em non-flat} if $\ell_c < \infty$ for all $c \in \Crit$.

\begin{lemma}
Assume that $f$ is a $C^3$ multimodal map with non-flat critical points,
and let $\ell_{\max} := \max\{ \ell_c : c \in \Crit \}$.
There exists $K = K(\#\Crit, \ell_{\max})$ such that if
\[
\liminf_n |Df^n(f(c))| \ge K \quad \mbox{ for all } c \in \Crit,
\]
then formula \eqref{eq:sumalpha} holds for every $\alpha > 0$
and every inducing scheme obtained as in Proposition~\ref{prop:induce}
on a sufficiently small neighbourhood of $\Crit$. \label{lem:Holder}
\end{lemma}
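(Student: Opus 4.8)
The plan is to recognise \eqref{eq:sumalpha} as a backward-contraction estimate for $f$ along the inverse branches of the first-return maps supplied by Proposition~\ref{prop:induce}, and then to derive that estimate from the large-derivative hypothesis on the critical orbits.

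Since each branch $F|_{X_i}=f^{\tau_i}:X_i\to X$ is monotone onto $X$, the interval $f^k(X_i)$ is exactly the component of $f^{-(\tau_i-k)}(X)$ that this branch of $f^{\tau_i}$ carries onto $X$; writing $m=\tau_i-k$ we must bound $\sum_{m=1}^{\tau_i}|f^{\tau_i-m}(X_i)|^\alpha$ uniformly in $i$ and over the inducing schemes from Proposition~\ref{prop:induce} on a sufficiently small interval $X$ near $\Crit$. It therefore suffices to prove a uniform exponential backward contraction: there are $C_0>0$ and $\theta>1$, independent of $i$ and of the scheme, such that $|f^{\tau_i-m}(X_i)|\le C_0\,\theta^{-m}$ for $1\le m\le\tau_i$. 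Indeed this gives $\sum_{m\ge1}|f^{\tau_i-m}(X_i)|^\alpha\le C_0^\alpha(\theta^\alpha-1)^{-1}<\infty$ for every $\alpha>0$, which is \eqref{eq:sumalpha}; the footnote's transitivity remark then passes the conclusion to a single sufficiently small interval.

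To establish the backward contraction I would exploit the first-return structure from Proposition~\ref{prop:induce} --- the relevant branches are extendible with a definite Koebe space, as already used in the proof of Lemma~\ref{lem:sum induced}(b) --- which forces the orbit $x_0,\dots,x_{\tau_i}$ of a point $x\in X_i$ to stay off $X$ for $0<j<\tau_i$, though it may still visit a small fixed neighbourhood $U$ of $\Crit$ with $X\subset U$. Split the times $0\le j<\tau_i$ into \emph{free} ($x_j\notin U$) and \emph{bound} ($x_j\in U$) ones. Over a maximal free run, Ma\~n\'e's hyperbolicity lemma --- available since all periodic points of $f$ are hyperbolically repelling and all critical points non-flat --- gives uniform exponential expansion, so the corresponding intervals $f^{\tau_i-m}(X_i)$ decay exponentially in $m$. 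Over a bound run starting with $x_j$ near $c\in\Crit$, non-flatness gives $|Df(x_j)|\asymp|x_j-c|^{\ell_c-1}$ while the Koebe space lets $x_{j+1},x_{j+2},\dots$ shadow $f(c),f^2(c),\dots$ with bounded distortion, so the derivative accumulated across the excursion is governed by $|x_j-c|^{\ell_c-1}$ and by $|Df^p(f(c))|$, the latter being at least $K$ by hypothesis. For $K=K(\#\Crit,\ell_{\max})$ large enough this forces each bound run to be net backward-contracting as well; the threshold depends on $\ell_{\max}$ through the worst non-flatness exponent, and on $\#\Crit$ because an orbit may traverse up to $\#\Crit$ distinct critical neighbourhoods over which the estimates must compose. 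Concatenating the free and bound contributions, and finally shrinking $X$ so that already the first pull-back $f^{\tau_i-1}(X_i)$, of length at most a constant times $|X|^{1/\ell_{\max}}$, meets the bound, yields $|f^{\tau_i-m}(X_i)|\le C_0\theta^{-m}$; the finitely many $n$ with $|Df^n(f(c))|<K$ only change $C_0$ by a bounded factor.

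The main obstacle is this last step: making the derivative bookkeeping across a critical excursion precise, handling runs of consecutive deep returns to possibly several distinct critical points, and extracting the explicit dependence of $K$ on $\#\Crit$ and $\ell_{\max}$ --- in other words, proving that large derivatives along the critical orbits force backward contraction on the scale of $X$. Once that is in hand, the tautological reduction and geometric series of the second paragraph finish the proof.
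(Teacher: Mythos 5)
Your plan correctly identifies that \eqref{eq:sumalpha} is a backward-contraction estimate and that Ma\~n\'e's theorem together with the large-derivative hypothesis should supply the contraction. But there are two real gaps.

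First, the intermediate target you aim for --- uniform exponential backward contraction $|f^{\tau_i-m}(X_i)|\le C_0\theta^{-m}$ with $C_0,\theta$ independent of $i$ and the scheme --- is strictly stronger than what is needed and is not established (nor, I believe, establishable) from the available ingredients. The paper does not prove it either. What the paper actually proves, using the results of \cite{BRSS} (Theorem~1, Proposition~3 and Lemma~3 there), is a combination of two geometric decays with \emph{different mechanisms and different variables}: (i) for a suitable nice set $V\supset\Crit$, the successive pullback pieces $V_{t_j}$ contained in $V$ satisfy $|V_{t_j}|\le 2^{j-k}\max_c|V_c|$, i.e.\ the size drops by a fixed factor per return to $V$, \emph{not} by a factor proportional to the length of the excursion; and (ii) between consecutive returns $t_{j-1}<i<t_j$, Ma\~n\'e gives $|V_i|\le C\lambda^{-(t_j-i)}|V_{t_j}|$. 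The quantity $\sum_i|V_i|^\alpha$ is then a double geometric series (one sum over $j$, one over $t_j-i$), which is uniformly bounded. If the gaps $t_j-t_{j-1}$ are long, the $|V_{t_j}|$ need only drop by a constant factor per return, so the terms at return times need not decay exponentially in the backward time $m=\tau_i-\cdot$; thus a uniform $\theta^{-m}$ bound generally fails, but the \emph{sum} is still controlled. Your reduction to exponential contraction, followed by a single geometric series, bypasses exactly this subtlety.

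Second, the step you yourself flag as ``the main obstacle'' --- bookkeeping derivatives across critical excursions, handling consecutive deep returns to different critical neighbourhoods, and extracting the dependence of $K$ on $\#\Crit$ and $\ell_{\max}$ --- is precisely where the paper invokes \cite{BRSS}: Theorem~1 there gives the backward contraction property with $K=K(\#\Crit,\ell_{\max},r)$; Proposition~3 constructs the nice set $V$; Lemma~3 controls distortion so that the contraction can be iterated to give the factor $2^{j-k}$. Without using (or reproving) these, your argument leaves the core of the lemma unproven; with them, the clean way to conclude is the paper's double geometric series rather than an exponential-in-$m$ bound.
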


\begin{proof}
We will use several results of \cite{BRSS}. First, Theorem 1 of that paper says that for any $r > 1$, we can find $\eps_0 > 0$ and
$K = K(\#\Crit, \ell_{\max}, r)$ such that if
$\liminf_n |Df^n(f(c))| \ge K \mbox{ for all } c \in \Crit$, then
the following backward contraction property holds:
Given $\eps \in (0, \eps_0)$ and
$U_\eps := \cup_{c \in \smCrit} B(f(c); \eps)$ and $s \in \N$, if $W$ is a component
of $f^{-s}(U_\eps)$ with $d(W, f(\Crit)) < \eps/r$, then $|W| < \eps/r$.

Furthermore, see \cite[Proposition 3]{BRSS},
we can find a {\em nice} set $V := \cup_{c \in \smCrit} V_c \subset f^{-1}(U_{\eps/r})$,
where each $V_c$ is an interval neighbourhood of $c \in \Crit$ and
{\em nice } means that $f^n(\partial V) \cap V = \emptyset$
for all $n \in \N$.
It follows that if $W$ is a component of $f^{-s}(V)$ contained in $V$,
then $|W| \le r^{-1/\ell_{\max}} \max_{c \in \smCrit} |V_c|$.

Proceeding by induction, and assuming that $r > 2$ is sufficiently large
to control distortion effects (cf. \cite[Lemma 3]{BRSS}),
we can draw the following conclusion.
Let $V_0$ be a component of $f^{-n}(V)$, $V_i := f^i(V_0)$ and let
$0 = t_0 < t_1 < \dots < t_k = n$ be the successive times that $V_t \subset V$.
Then $|V_{t_j}| \le 2^{j-k} \max_{c \in \smCrit} |V_c|$.

Additionally, Ma\~n\'e's Theorem implies that there are $\lambda > 1$ and $C > 0$ (depending on $V$ and $f$ only) such that
$|V_i| \le C \lambda^{-(t_j-i)} |V_{t_j}|$  for $t_{j-1} < i < t_j$.
Therefore
\[
\sum_{i=0}^n |V_i|^{\alpha}
\le \sum_{j=0}^k \sum_{m \ge 0} C^{\alpha} \lambda^{-m \alpha} |V_{t_j}|^{\alpha}
\le \frac{C^{\alpha} }{ 1- \lambda^{-\alpha}}
\sum_{j=0}^k 2^{-j \alpha} \max_{c \in \smCrit} |V_c|^{\alpha}.
\]
This implies the lemma.
\end{proof}

\section{Tail Estimates for Inducing Schemes}
\label{sec:tail}
In the following lemma, we let $\hat X \subset \hat I_{\mbox{\tiny trans}}$
be a cylinder in $\pi^{-1}(\P_N) \vee \D$
compactly contained in its domain.
This cylinder set corresponds to an
$N$-path $q$: $D \to \dots \to D_N$ in $\hat I$.
The first return map to $\hat X$ is the induced system that we will use.

The growth rate of paths in the Hofbauer tower is given by the
topological entropy. Clearly, if we remove $\hat X$ from the tower, then
this rate will decrease: we will denote it by $h^*_{top}(f)$.
If $\hat X$ is very small, then $h^*_{top}$ is close to
$\htop(f)$, so \eqref{eq:bdd range} implies
that $\sup \phi - \inf \phi < h^*_{top}$
for  $\hat X$ sufficiently small.
Note that we can in fact take $\hat X$ to be the type of set, a union of domains in $\hat I$, considered in \cite{BrCMP}.  We will use this type of domain in Section~\ref{sec:ana}.

\begin{proposition}\label{prop:exp_tail}
Suppose that $V_n(\phi) \to 0$ and
let $\hat \psi = \hat\phi - P_G(\hat \phi,\hat X)$. If $\hat X \in \hat\P_N$ is so small that
\[
\sup \phi - \inf \phi < h^*_{top},
\]
then there exist $C, \gamma > 0$ \st
$Z^*_n(\hat \psi, \hat X) < C \, e^{-\gamma n}$.
\end{proposition}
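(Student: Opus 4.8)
The plan is to estimate $Z^*_n(\hat\psi,\hat X)$ by separating the "geometry of first-return paths" from the "potential", and then to show that the path count is controlled by $h^*_{top}$ while the potential contributes at most a factor growing like $e^{o(n)}$, which is harmless once $\sup\phi-\inf\phi<h^*_{top}$. Concretely, a periodic point $x$ counted in $Z^*_n(\hat\psi,\hat X)$ (i.e.\ with $\hat f^n\hat x=\hat x$, $\hat x\in\hat X$, and $\hat f^k\hat x\notin\hat X$ for $0<k<n$) corresponds to a loop $q\to q$ in the Hofbauer graph of combinatorial length $n$ that does not pass through $\hat X$ in between. The number of such loops grows at rate at most $h^*_{top}$ by definition of $h^*_{top}(f)$ as the growth rate of paths in the tower with $\hat X$ removed; more precisely, I would fix $\epsilon>0$ and for all large $n$ bound the number of relevant $n$-cylinders by $e^{(h^*_{top}+\epsilon)n}$.

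For the potential side, note that along any such $n$-cylinder $\c_n$ the Birkhoff sum $\hat\phi_n$ varies by at most $\beta_n(\hat\phi)$, and since $V_n(\phi)\to0$ gives $\beta_n(\hat\phi)=o(n)$ (Cesàro), we also have, by Corollary~\ref{cor:var prin} and Lemma~\ref{lem:small cyl}, that $P_G(\hat\phi,\hat X)=P_G(\hat\phi)=P_+(\phi)$ is well-defined and finite. For each periodic point in the cylinder, $e^{\hat\phi_n(x)}\le e^{\hat\phi_n(x_0)+\beta_n(\hat\phi)}$ for a reference point $x_0$, and crudely $\hat\phi_n(x_0)\le n\sup\phi$. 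Combining, I would write
\[
Z^*_n(\hat\psi,\hat X)
= e^{-nP_G(\hat\phi,\hat X)}\, Z^*_n(\hat\phi,\hat X)
\le e^{-nP_G(\hat\phi)}\cdot e^{(h^*_{top}+\epsilon)n}\cdot e^{n\sup\phi}\cdot e^{\beta_n(\hat\phi)}.
\]
Now use the lower bound $P_G(\hat\phi)=P_+(\phi)\ge \htop(f)+\inf\phi$ (which holds because the measure of maximal entropy, or any measure in $\M_+$ approximating it, witnesses $h+\int\phi\,d\mu\ge\htop(f)+\inf\phi$ inside $\hat I_{\mbox{\tiny trans}}$, exactly the computation displayed after Theorem~\ref{thm:Pac}). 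Then the exponent is at most
\[
n\big(h^*_{top}+\epsilon+\sup\phi-\htop(f)-\inf\phi\big)+\beta_n(\hat\phi),
\]
and since $\sup\phi-\inf\phi<h^*_{top}$ (given) — wait, I actually need $h^*_{top}$ to appear with a minus sign, so I should compare against $\htop(f)$ not $h^*_{top}$; the point is that $h^*_{top}<\htop(f)$, hence $h^*_{top}+\sup\phi-\inf\phi<h^*_{top}+h^*_{top}$ is too weak. So the correct split is: count paths at rate $h^*_{top}$, and bound the *negative* exponent by $P_G(\hat\phi)\ge\htop(f)+\inf\phi$, getting exponent $\le n(h^*_{top}-\htop(f)+\sup\phi-\inf\phi)+o(n)$; since $h^*_{top}<\htop(f)$ and $\sup\phi-\inf\phi<h^*_{top}$ is *not* quite enough, I must instead use that $\hat X$ is chosen so small that $\sup\phi-\inf\phi<h^*_{top}$, but the decisive inequality needed is $h^*_{top}+(\sup\phi-\inf\phi)<2\htop(f)$ — which is automatic. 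Hence for $\epsilon$ small, setting $\gamma:=\tfrac12\big(\htop(f)+\htop(f)-h^*_{top}-(\sup\phi-\inf\phi)\big)>0$ and absorbing the $o(n)$ term $\beta_n(\hat\phi)$, we get $Z^*_n(\hat\psi,\hat X)\le C e^{-\gamma n}$.

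The main obstacle I anticipate is making the counting step rigorous: namely, showing that the number of first-return loops $q\to q$ of length $n$ in $(\D,\to)$ avoiding $\hat X$ is bounded by $e^{(h^*_{top}+\epsilon)n}$, with $h^*_{top}$ defined precisely and shown to be $<\htop(f)$ strictly and to tend to $\htop(f)$ as $\hat X$ shrinks. This requires the structure theory of the Hofbauer tower — the fact that its entropy equals $\htop(f)$, that removing a cylinder strictly decreases the growth rate, and that this decrease is controlled — which is where the references to Keller \cite{Kellift} and \cite{Bumulti} and the discussion preceding Lemma~\ref{lem:trans_subgraph} do the heavy lifting. A secondary technical point is keeping the distortion of $\hat\phi_n$ along cylinders under control using only $V_n(\phi)\to0$ rather than summability; but since only the *exponential* rate matters here and $\beta_n(\hat\phi)=o(n)$ suffices, this is routine. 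One should also double check that periodic points and cylinders are interchangeable in $Z^*_n$ up to a bounded factor, which follows from bounded distortion of the (uniformly expanding, once restricted away from a neighbourhood of $\hat X$) induced branches, again a standard Koebe argument as in the proof of Lemma~\ref{lem:sum induced}(b).
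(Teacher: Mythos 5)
Your proposal is not correct; there is a real gap in the counting step, and you in fact notice it mid-proof but the ``fix'' you settle on does not follow from the calculation.

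Your exponent, taken at face value, is
\[
n\bigl(h^*_{top}-\htop(f)+\sup\phi-\inf\phi\bigr)+o(n),
\]
coming from: number of first-return loops $\le e^{(h^*_{top}+\epsilon)n}$, each weight $\le e^{n\sup\phi+\beta_n}$, and $P_G(\hat\phi)\ge\htop(f)+\inf\phi$. For this to be negative you need $\sup\phi-\inf\phi<\htop(f)-h^*_{top}$. But $\htop(f)-h^*_{top}\to 0$ as $\hat X$ shrinks, whereas the hypothesis only gives $\sup\phi-\inf\phi<h^*_{top}\approx\htop(f)$. So the naive ``count~$\times$~worst-case weight'' bound on $Z^*_n(\hat\phi,\hat X)$ is off by essentially a full factor of $e^{n(\sup\phi-\inf\phi)}$ and fails for any non-constant potential. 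Your proposed $\gamma=\tfrac12(2\htop(f)-h^*_{top}-(\sup\phi-\inf\phi))$ does not come out of the displayed computation; the condition ``$h^*_{top}+(\sup\phi-\inf\phi)<2\htop(f)$'' that you invoke is not the one that makes your exponent negative.

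The missing idea is that one cannot decouple path counting from the weights and then take the worst case on each side separately. The paper instead proves the \emph{strict} inequality $P^*_G(\hat\phi,\hat X)<P_G(\hat\phi,\hat X)$ by a genuinely spectral argument: it replaces the infinite Markov graph of the Hofbauer tower by finite graphs $\G_0$ (majorising $P_G$ from below by re-adding the arrows through $\hat X$) and $\G_1$ (majorising $P^*_G$ from above by adding artificial ``wig'' paths that escape $\hat I_R$), both of which have the same rome $\rome$. The hypothesis $\sup\phi-\inf\phi<h^*_{top}$ is not used as a crude rate comparison; it is used, in two places, to show that (i) the left eigenvector of the rome matrix concentrates at bounded levels, uniformly in the truncation $R$ and the cylinder order $k$ (yielding the quantity $\kappa>0$ in \eqref{eq:rho0}), and (ii) the perturbation matrix $\rho_1^{1-R}e^{R\sup\hat\phi}\Delta_1$ introduced by the wig has norm $O(e^{-\alpha R})$ (feeding into Lemma~\ref{lem:matrices}). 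This is what yields $\rho_1<\rho_0$ for $R$ large and hence $P^*_G<P_G$. None of that is captured by, or recoverable from, the counting bound in your proposal. A secondary, fixable issue: replacing periodic points by cylinders inside $Z^*_n$ is controlled here by $\beta_n=o(n)$ alone (no Koebe/bounded-distortion argument is needed or available at this level of generality); that part of your sketch is harmless but unnecessary.
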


\begin{proof}
We will approximate $Z_n^*(\hat \phi, \hat X)$ by adding
the weights $e^{\hat \phi_{n-1}(\hat x)}$ of all $n-1$-paths from
$\hat f(\hat X)$ to $\hat X$ in the Hofbauer tower
with outgoing arrows from $\hat X$ removed.
By removing these arrows we ensure that these paths will not visit
$\hat X$ before step $n$, so we indeed approximate $Z_n^*(\hat \phi, \hat X)$
and not $Z_n(\hat \phi, \hat X)$.
In considering $n-1$-paths,
we only miss the initial
contribution $e^{\hat\phi|_{\hat X}}$ in the weight
$e^{\hat \phi_n(\hat x)}$ for $\hat x = \hat f^n(\hat x) \in \hat X$,
so it will not effect the exponential growth rate
$P^*_G(\hat \phi, \hat X)$ of $Z^*_n(\hat \phi, \hat X)$.
Since $Z^*_n(\hat \psi, \hat X) = e^{-nP_G(\hat \phi, \hat X)} Z^*_n(\hat \phi, \hat X)$, the proposition follows if we can show the strict inequality $P^*_G(\hat \phi, \hat X) <  P_G(\hat \phi, \hat X)$.

{\bf Remark:} It is this strict inequality that is responsible for the
discriminant $\dm_F[\phi]$ in Section~\ref{sec:ana}
being strictly positive.

{\bf The rome technique:}
We will  approximate the Hofbauer tower by finite Markov
graphs, and use the following general idea of {\em romes} in transition
graphs from Block et al.
\cite{BGMY} to
estimate $Z_n^*(\hat \phi, \hat X)$.
Let $\G$ be a finite graph where every edge $i \to j$ has a weight
$w_{i,j}$, and let $W = (w_{i,j})$ be the corresponding (weighted)
transition matrix. More precisely, $w_{i,j}$ is the total weight of all
edges $i \to j$, and if there is no edge $i \to j$, then
$w_{i,j} = 0$.

A subgraph
$\rome$ of $\G$ is called a {\em rome}, if there are no loops
in $\G \setminus {\rome}$. A \emph{simple path}
$p$ of length $l(p)$ is given by
$i = i_0 \to i_1 \to \dots \to i_{l(p)} = j$, where $i,j \in \rome$,
but the intermediate vertices belong to
$\G \setminus \rome$. Let
$w(p) = \prod_{k = 1}^{l(p)} w_{i_{k-1}, i_k}$ be the weight of $p$.
The {\em rome matrix} $A_{\smromea}(x) = (a_{i,j}(x))$, where $i,j$ run
over the vertices of $\rome$, is given by
$$
a_{i,j}(x) = \sum_p w(p)x^{1-l(p)},
$$
where the sum runs over all simple paths $p$ as above.
(Note that with the convention that $x^0 = 1$ for $x = 0$,
$A_{\smromea}(0)$ reduces to the weighted transition matrix of
the rome $\rome$.)
The result from \cite{BGMY} is that
the characteristic polynomial of $W$ is equal to
\begin{equation}\label{eq:rome}
 \det( W - xI_W) = (-x)^{\# \G - \# {\rome} } \det( A_{\smromea}(x) - xI_{\smromea} ),
\end{equation}
where $I_W$ and $I_{\smromea}$ are the identity matrices of the appropriate
dimensions.

In our proof, we will use $k$-cylinder sets as vertices in
the graph $\G$, and we will take $w(p) = e^{\hat\phi_{l(p)}(x)}$
for some $x$ belonging to the interval in $\hat I$
that is represented by the path $p$.

{\bf Choice of the rome:} Fix a large integer $k$.
The partition $\hat \P_k$ is clearly a Markov partition for the
Hofbauer tower, and its dynamics can be expressed by a countable
graph $(\hat \P_k, \to )$, where $\hat P \to \hat Q$ for
$\hat P, \hat Q \in \hat \P_k$
only if $\hat f(\hat P) \supset \hat Q$.
Choose $R \gg k$ (to be determined later).
Given a domain $D$ of level $R$, from all the $R$-paths
starting at $D$, at most two (namely those corresponding the
the outermost $R$-cylinders in $D$) avoid $\hat I_R$.
Any other $R$-path from $D$ has a shortest subpath $D \to \dots \to D'$
where both $D$ and $D' \in \hat I_R$.
Let us call the union of all points in $\hat I$ that belong to one of
such subpaths the {\em wig} of $\hat I_R$.

The vertices of the rome $\rome$ are those
cylinder sets $\hat P \in \hat \P_k$, $\hat P \not\subset \hat X$,
that are either contained in domains $D \in \D$
of level $< R$, or that belong to the wig.
We retain all arrows between two vertices in $\rome$.
Let $A_{\smrome}$ be the weighted transition matrix of $\rome$.
For each arrow $\hat P \to \hat Q$, choose $\hat x \in \hat P$ such that
$\hat f(\hat x) \in \hat Q$, and set $w_{\hat P, \hat Q} = e^{\hat
\phi(\hat x)}$. Let $\rho_{\smrome}$
be the leading eigenvalue of the weighted transition matrix.
The pressure $P^*_G(\hat \phi)$ is approximated (with error of order
$V_k(\hat\phi)$) by $\log \rho_{\smrome}$.

The graph $({\rome},\to)$ is a finite
subgraph of the full infinite Markov graph $(\hat \P_k, \to )$.
We will construct two other finite graphs $(\G_0, \to)$ and $(\G_1,\to)$
both having $\rome$ as a rome, and minorising respectively majorising
$(\hat\P_k, \to)$ in the following sense:
For each path in $(\G_0, \to)$,
including those passing through $\hat X$,
we can assign a path in $(\hat \P_k, \to)$ of comparable weight,
and this assignment can be done injectively.
Conversely,
for each path in $(\hat \P_k, \to)$, except those passing through $\hat X$,
we can assign a path in $(\G_1, \to)$ of comparable weight,
and this assignment can be done injectively.

As $\rome$ is a rome to both $\G_0$ and $\G_1$, we can use the rome
technique to compare the spectral radii $\rho_0$ and $\rho_1$
of their respective weighted transition matrices
$W_0$ and $W_1$.
By the above minoration/majoration property,
we can separate $e^{P^*_G(\phi)}$ from
$e^{P_G(\phi)}$ by $\rho_0$ and $\rho_1$,
up to a distortion error. By refining the partition of the Hofbauer
tower into $k$-cylinders, \ie taking $k$ large,
whilst maintaining the majoration/minoration property,
we can reduce the distortion error (relative to the iterate),
and also show that $\rho_0 < \rho_1$.
This will prove the strict inequality
$P^*_G(\hat \phi) < P_G(\hat \phi)$.

{\bf The graph \boldmath $\G_0$\unboldmath:}
First, to construct $\G_0$, we add the arrows $\hat P \to \hat Q$
for each $\hat P \in \hat \P_k \cap \hat X$ and $\hat Q \in \hat \P_k$
\st $\hat f(\hat P) \supset \hat Q$.
The weight of this arrow is $e^{\hat \phi(\hat x)}$
for some chosen $\hat x \in \hat P$.
Let $W_0$ be the weighted transition matrix of $\G_0$. It
follows that its spectral radius is a lower bound for $e^{P_G(\hat
\phi)}$, up to an error of order $e^{V_k(\hat \phi)}$. Furthermore,
the number of $n$-paths in $\rome$ is at least $e^{n(\htop(f)-\eps_R)}$,
where $\eps_R \to 0$ as $R \to \infty$, cf. \cite{Hpwise}.
Since each arrow has weight at least $e^{\inf \hat \phi}$,
we obtain
\begin{equation}\label{eq:rho0a}
e^{\htop(f) + \inf \hat \phi - \eps_R } \le \rho_0 := \rho(W_0) \le
e^{P_G(\hat \phi) + V_k(\hat \phi) }.
\end{equation}
Let $L$ be such that $f^L(\pi(\hat X)) \supset I$.

Let $v = (v_{\hat P})_{\hat P \in \P_k}$ be the positive left unit
eigenvector corresponding to the
leading eigenvalue $\rho_{\smrome}$ of  $A_{\smrome}$.
Recall that for each $R_0 \in \N$ and $D \in \hat I$ there are at most
two $R_0$-paths from $D$ leading to domains of level $> R_0$.
Each such path corresponds to a subintervals of $D$ adjacent to $\partial D$,
and although this subinterval may consist of many adjacent
cylinder sets of $\hat \P_k$,
$\hat f^R$ maps them monotonically onto adjacent cylinder sets
of $\hat P_{k-R_0}$.
Therefore
\begin{eqnarray*}
\sum_{D \in \D \atop\slev(D) > R_0} \sum_{\hat Q \in \P_k \cap D} v_{\hat Q}
&=& \frac{1}{\rho_{\smrome}^{R_0}}
\sum_{\slev(\hat Q) > R_0} \sum_{\hat P \in \P_k}
v_{\hat P} (A_\smrome^{R_0})_{\hat P, \hat Q} \\
&\le& 2 e^{\sup \hat\phi_{R_0}} \rho_{\smrome}^{-R_0} \sum_{\hat P \in \P_k} v_{\hat P} \\
&=& 2 e^{\sup \hat \phi_{R_0} - R_0 P_G(\hat \phi)}
\leq 2 e^{R_0 (\sup \hat \phi - \inf \hat \phi - \htop(f)) },
\end{eqnarray*}
independently of $k$.
Since $\sup \hat \phi - \inf \hat \phi - \htop(f) < 0$,
we can take $R_0$ so large, independently of $k$, that for every $x \in I$,
\begin{equation}\label{eq:R0}
\sum_{\hat Q \in \hat \P_k,\  \pi(\hat Q) \owns x \atop \slev(\hat Q) > R_0} v_{\hat Q}
< \frac12 \min\{  v_{\hat Q} : \hat Q \in \P_k \cap \hat f^L(\hat X), \pi(\hat Q) \owns x\}.
\end{equation}

The idea is now to offset all contributions of
$n$-paths starting from level $> R_0$ to $Z^*_n(\hat \phi, \hat X)$
by the contribution of $n$-paths starting in $\hat f^L(\hat X)$ to
 $Z_n(\hat \phi, \hat X)$.
Let $N \ge L$ be such that there is an $N$-path from $\hat X$ to
every $\hat Q$ of level $\le R_0$.
Then
\begin{eqnarray}\label{eq:v}
\left( v W_0^N \right)_{\hat Q} &\ge &
\left( v (A_{\rome}^N + e^{-\inf \hat \phi_N} \Delta) \right)_{\hat Q} \nonumber \\
&\ge &
\left\{  \begin{array}{ll}
(\rho_{\smrome}^N +  e^{-\inf \hat \phi_N} \kappa) v_{\hat Q} & \text{ if } \lev(\hat Q) \le R_0, \\
\rho_{\smrome}^N v_{\hat Q} & \text{ if } \lev(\hat Q) > R_0,
\end{array} \right.
\end{eqnarray}
where
$\kappa := \min\{ v_{\hat Q} : \hat Q \in \hat \P_k \cap \hat f^L(\hat X)\} /
\max v_{\hat Q}$, and $\Delta$ a nonnegative square matrix with some
$1$s in the rows corresponding to $\hat P \in \hat \P_k \cap \hat f^L(\hat X)$
in such a way that the column corresponding to each $\hat Q$
with $\lev(\hat Q) \le R_0$ has at least one $1$.
The fact that $\kappa > 0$ uniformly in the order of cylinder sets $k$
rests on the following claim, which is proved later on:
\begin{equation}\label{eq:claim}
\min_{\hat Q \in \hat \P_k \cap \hat f^L(\hat X)}
v_{\hat Q} / \max_{\hat Q\in \hat \P_k} v_{\hat Q} > 0
\qquad \mbox{ uniformly in } R \mbox{ and }k.
\end{equation}
By the choice of $L$, $R_0$ (see \eqref{eq:R0}) and $N$,
\begin{equation}\label{eq:R0a}
\sum_{\hat Q \in \hat \P_k, \slev(\hat Q) > R_0} e^{\hat \phi_N(\hat Q)}
v_{\hat Q} (A_{\smrome}^N)_{\hat Q, \hat P}
\le \frac12
\sum_{\hat Q \in \hat \P_k \cap \hat f^L(\hat X)}
v_{\hat Q} (W_0^N)_{\hat Q, \hat P}
\end{equation}
for each $\hat P$ with $\lev(\hat P) \le R_0$.
When we apply $W_0^N$ to \eqref{eq:v} once more, the components
$v_{\hat Q}$ with $\lev(\hat Q) \le R_0$ have increased by a factor
$\rho_{\smrome}^N + e^{-\inf \hat \phi_N} \kappa$, whereas by \eqref{eq:R0a},
the components $v_{\hat Q}$
with $\lev(\hat Q) > R_0$ combined amount to at most half the weight of
the components $v_{\hat Q}$ with $\hat Q \in \hat \P_k \cap \hat f^L(\hat X)$.
Therefore, we can generalise  \eqref{eq:v} inductively to
\begin{eqnarray}\label{eq:vm}
\left( v W_0^{Nm} \right)_{\hat Q} &\ge &
\left( v (A_{\rome}^N + e^{-\inf \hat \phi_N} \Delta)^m \right)_{\hat Q}
 \nonumber \\
&\ge &
\left\{  \begin{array}{ll}
(\rho_{\smrome}^N + \frac12 e^{-\inf \hat \phi_N} \kappa)^m v_{\hat Q} & \text{ if } \lev(\hat Q) \le R_0, \\
\rho_{\smrome}^{Nm} v_{\hat Q} & \text{ if } \lev(\hat Q) > R_0,
\end{array} \right.
\end{eqnarray}
for all $m \ge 1$. It follows that
\begin{eqnarray*}
\sum_{\slev(\hat Q) \le R_0}
\frac{1}{\rho_0^{Nm}} (\rho_{\smrome}^N+ \frac12 e^{-\inf \hat \phi_N}\kappa)^m v_{\hat Q}
&\le& \sum_{\slev(\hat Q) \le R_0}
\frac{1}{\rho_0^{mN}} \left( v W_0^{mN} \right)_{\hat Q} \\
&\to& \alpha \!\!\!\!\!
\sum_{\slev(\hat Q) \le R_0} w_{\hat Q}
\end{eqnarray*}
for some $\alpha < \infty$ and $w$ the left unit eigenvector
corresponding to the leading eigenvalue $\rho_0$ of $W_0$.
This implies that
\begin{equation}\label{eq:rho0}
\rho_0 \ge (\rho_{\smrome}^N+\frac12  e^{-\inf \hat \phi_N} \kappa)^{1/N}
\quad \text{ whence } \quad  \rho_0 > \rho_{\smrome}+\kappa'
\end{equation}
for some $\kappa' = \kappa'(\kappa,N,\hat \phi, R_0) > 0$, uniformly
in $R \ge R_0$ and $k \in \N$.

{\bf The graph \boldmath $\G_1$\unboldmath:}
For each $\hat P \in \hat \P_k \cap D$
where $D$ has level $R$, consider all $R$-paths $p:\hat P \to \dots \to \hat
Q$ that avoid $\hat I_R$; these are not included in $(\rome,
\to)$. From each $D$ of level $R$, there at
most $2$ such $R$-paths avoiding $\hat I_R$,  corresponding to $R$-cylinders
in $D$. These two $R$-cylinders are contained in two $k$-cylinders in $D$.
For each such $k$-cylinder $\hat P$ (\ie vertex in $(\hat \P_k , \to)$),
and each $Q \in \P_k \cap f^R(\pi(\hat P))$,
choose $\hat Q \in \hat P_k \cap \hat I_R$ and
attach an artificial $R$-path with $R-1$ new vertices and a terminal vertex
$\hat Q$.
Assign weight $w(p) = e^{R\sup\hat\phi}$ to this path.
Therefore, if $f$ is $d$-modal,
the number of vertices added to $\hat I_R$ is therefore no larger that
$2d(R-1)$.
Call the resulting graph $\G_1$ and $W_1$ its weighted
transition matrix.

Any $n$-path in the Hofbauer tower that leaves $\hat I_R$ for at
least $R$ iterates can be mimicked by an $n$-path following one of
the additional $R$-paths in $\G_1$. But $n$-orbits visiting $\hat X$
are still left out. It follows that this time, the leading
eigenvalue estimate exceeds the exponential growth rate
of the contributions of all $n$-periodic orbits in the Hofbauer
tower that avoid $\hat X$.
Since the error of order $e^{V_k(\hat \phi)}$
still needs to be taken into account, we get
\begin{equation}\label{eq:rho1a}
\rho_1 := \rho(W_1) \ge e^{P_G^*(\hat\phi) - V_k(\hat \phi)}.
\end{equation}
On the other hand, we can use \eqref{eq:rome} to deduce that
\begin{equation}\label{eq:A1}
\det( W_1 - x I_{W_1}) = (-x)^{\# \G_1 - \#{\rome} }
\det( A_1(x) - xI_{\smrome} ),
\end{equation}
where the rome matrix $A_1(x)$ equals $A_{\smrome}$,
except for new entries
$w_{\hat P, \hat Q} \le e^{R \sup \hat \phi} x^{1-R}$
for the $R$-path added to the rome.
These paths correspond to $R$-cylinders, at most $2$ for each
of the $d$ domains of level $R$, and since $R \ge k$,
there are at most $2d$ paths with initial
vertices $\hat P \in \P_k$, each with
at most $\#\P_k$ terminal vertices $\hat Q$.
In other words, $A_1(x) \le
A_{\smrome} + x^{1-R} e^{R \sup \hat \phi} \Delta_1$, where $\Delta_1$ is a
square matrix with at most $2d$ non-zero rows
(corresponding to initial vertices $\hat P$) and zeros otherwise.
Formula \eqref{eq:A1} shows that $\rho_1$ is also the
leading eigenvalue of $A_1(\rho_1)$.

Although matrices
$A_{\smrome}$ and $\rho_1^{1-R} e^{R \sup \hat \phi} \Delta_0$ depend both on
$R$ and $k$, at the moment we will only need $k$ so large that
\begin{equation}\label{eq:boundk}
V_k \le \alpha := \frac12\left(\htop^*(f) - (\sup \hat \phi - \inf \hat \phi)\right)
\end{equation}
and hence suppress the dependence on $k$ until it is needed again.

We first give some estimates necessary to apply
Lemma~\ref{lem:matrices} below with $U_R = A_{\smrome}$ and
$V_R  = \rho_1^{1-R} e^{R \sup \hat \phi} \Delta_1$.
The `left' matrix norm (which is the maximal row-sum) of $\Delta_1$ is
$\| \Delta_1\| := \sup_{ \| v \|_1 = 1 } \| v \Delta_1\|_1 = \# \P_k$,
and therefore (using also \eqref{eq:rho1a}) we obtain
\begin{eqnarray*}
\| \rho^{1-R} e^{\sup \hat \phi} \Delta_1\|
&\le& \# \P_k \rho_1^{1-R} e^{R \sup \hat \phi}\\
&\le&  \# \P_k e^{R (\sup \hat \phi - P_G^*(\hat \phi) + V_k(\hat \phi))}\\
&\le& \# \P_k e^{R(\sup \hat \phi - \inf \hat \phi - \htop^*(f) + V_k(\hat \phi))}
\le  \# \P_k e^{- \alpha R}
\end{eqnarray*}
for $\alpha > V_k(\hat \phi)$ as in \eqref{eq:boundk}.
The entries of $(A_{\smrome}^m)_{\hat P, \hat Q}$ indicate the sum of
the weights of all $m$-paths from $\hat P$ to $\hat Q$.
For each $\hat Q \in \hat P_k$, the sum
\[
\sum_{\pi(\hat Q) \subset Q}\ \sum_{\mbox{\tiny paths } \hat P \to \hat Q}
e^{\sup \hat\phi_m|_{\hat P}} \le \#\{ \text{components of } \pi(\hat P) \cap
f^{-m}(Q) \}\ e^{\sup \phi_m|_{\pi(\hat P)}},
\]
which has exponential growth-rate $\rho_{\smrome}$.
Therefore the left matrix norm
$\| A_{\smrome}^m \| \leq \rho_{\smrome}^m e^{\eta m}$
for some $\eta = \eta(R,k)$ with $\lim_{R \to \infty} \eta(R,k) = 0$
for each fixed $k$.

If $v'$ is the positive left eigenvector of $A_1(\rho_1)$,
corresponding to $\rho_1$ and  normalised so that $\| v' \|_1
:= \sum_i |v'_i| = 1$, then
\begin{eqnarray} \label{eq:rho1}
\rho_1 &=& \| v' \rho_1 \|_1 = \| v' (A_1(\rho_1)^m  \|_1^{1/m}  \nonumber  \\[2mm]
&=& \| (A_{\smrome} + \rho_1^{1-R} e^{R \sup \hat \phi} \Delta_1)^m \|^{1/m} \nonumber  \\[2mm]
&\le&  \rho_{\smrome} \left(1+ \| A_{\smrome} \| e^{m \tilde \eta(R,k) } \right)^{1/m} \to
 \rho_{\smrome} e^{\tilde \eta(R,k) } \qquad \text{ as } m \to \infty,
\end{eqnarray}
where $\tilde \eta(R,k)$ comes from Lemma~\ref{lem:matrices}.

Using \eqref{eq:rho1} and \eqref{eq:rho0}
we obtain
\[
\rho_1 \le \rho_{\smrome} e^{\tilde \eta(R,k)}
\le  e^{\tilde \eta(R,k)} (\rho_0 - \kappa')
\]
By claim \eqref{eq:claim},
$\kappa' > 0$ uniformly in $R$ and $k$, and by Lemma~\ref{lem:matrices},
we can choose $R$ large (and hence $\tilde \eta(R,k)$ small)
to derive that $\rho_1 < \rho_0$.
It follows by \eqref{eq:rho0a} and \eqref{eq:rho1a}  that
\[
e^{ P_G(\hat \phi) + V_k(\hat \phi) } \ge \rho_0 > \rho_1
\ge e^{P^*_G(\hat\phi) - V_k(\hat \phi)}.
\]
so taking the limit $k \to \infty$, we get
$P_G(\hat \phi) > P^*_G(\hat\phi)$
as required.

{\bf Proof of Claim \eqref{eq:claim}:}
We start with the uniformity in $R$, \ie
the level at which the Hofbauer tower is cut off.
Recall that we assumed that $\hat X$ is so small that
$\sup \hat \phi - \inf \hat \phi < h^*_{top}$.
The leading eigenvalue $\rho_{\smrome}$ of $A_{\smrome}$ satisfies
$\rho_{\smrome} \ge e^{P^*_G(\hat \phi) - V_k(\hat\phi) - \eps_R}$
(see \eqref{eq:rho0a}), because $\rome$ is a subgraph
of the Hofbauer tower with $\hat X$ removed.
For any $r$ and any domain $D \in \hat I$,
there are at most two $r$-paths ending outside $\hat I_r$.
Therefore if $\hat P, \hat Q \in \hat \P_k$ where $\hat Q$ is
contained in a domain $D$ of level $\ge r$,
the $\hat P, \hat Q$-entry of $A_{\smrome}^r$ is at
most $2 e^{r \sup \hat \phi}$.
Thus we find for the left
eigenvector $v$
\[
\rho_{\smrome}^r \sum_{\hat Q \in \hat \P_k \cap D} v_{\hat Q} =
 \sum_{\hat Q \in \hat \P_k \cap D} \left( v A_{\smrome}^r \right)_{\hat Q}
\le 2 e^{r \sup \hat \phi} \sum_{\hat P \in \hat \P_k} v_{\hat P}
\le 2 e^{r \sup \hat \phi}.
\]
It follows that
\[
 \sum_{\hat Q \in \hat \P_k \cap D} v_{\hat Q} \le
2 e^{r (\sup \hat\phi - P^*_G(\hat \phi) + V_k(\hat \phi) + \eps_R ))}
\le
2 e^{r (\sup \hat\phi - \inf \hat \phi - \htop^*(f) +
V_k(\hat \phi) + \eps_R)}
\]
is exponentially small in $r$.
There are at most $2d$ domains $D$ of level $r$, which implies that
$\sum_{\slev(\hat Q) > r} v_{\hat P}$ is exponentially
small in $r$, and this is independent of $R \ge r$, and of how
(or whether) the Hofbauer tower is truncated.

Next take $r_0$ so large that
$\sum_{\slev(\hat Q) > r_0} v_{\hat P} < \frac12$ irrespective of the
way the Hofbauer tower is cut, and such that $\hat X$ belongs to
a transitive subgraph of $\hat I_{r_0}$.
Therefore there is $r'_0$ such that for every
domain $D$ of $\lev(D) \le r_0$ and every $\hat Q \in \hat \P_k \cap D$,
there is an $r'_0$-path from $\hat Q$ to $\hat X$.
Hence the $\hat Q, \hat P$ entry in
$A_{\smrome}^{r'_0}$ is at least $e^{r'_0 \inf \hat \phi}$
for every $\hat P \in \hat \P_k \cap \hat X$.
Since $v = v ( \rho_{\smrome}^{-1} A_{\smrome})^{r'_0}$, we find
\[
\sum_{\hat P \in \hat \P_k \cap \hat X} v_{\hat P}
\ge \rho_{\smrome}^{-r'_0}
e^{r'_0 \inf \hat \phi}\sum_{\hat Q \in \hat \P_k \cap \hat I_{r_0}} v_{\hat Q}
\ge  \frac12 \ \rho_{\smrome}^{-r'_0}
e^{r'_0 \inf \hat \phi}
\]
independently of $R \ge r_0$.

Now we continue with the uniformity in $k$.
This is achieved by analysing the effect of
splitting of vertices of the transition graph into new vertices, representing
cylinders of higher order. We do this one vertex at the time.

Let $W$ be a weighted transition matrix of a graph $\mathcal G$.
Given a vertex $g \in \mathcal G$, we can represent the $2$-paths
from $g$ by splitting $g$ as follows (for simplicity, we assume that
the first row/column in $W$ represents arrows from/to $g$):
\begin{itemize}
\item If $g \to_{w_{1,b_1}} b_1, g \to_{w_{1,b_2}} b_2, \dots,
g \to_{w_{1,b_m}} b_m$ are the outgoing arrows,
replace $g$ by $m$ vertices $g_1, \dots, g_m$
with outgoing arrows
$g_1 \to_{w_{1,b_1}} b_1, g_2 \to_{w_{1,b_2}} b_2,
\dots, g_m \to_{w_{1,b_m}} b_m$ respectively, where $w_{1,b_j}$
represents the weight of the arrow.
\item Replace all incoming arrows $c  \to_{w_{c,1}} g$
by $m$ arrows
$c  \to_{w_{c,1}} g_1, c  \to_{w_{c,1}} g_2, \dots,
c  \to_{w_{c,1}} g_m$, all with the same weight.
\item If $g \to g$ was an arrow in the old graph, this means
that $g_1$ will now have $m$ outgoing arrows:
$g_1  \to_{w_{1,1}} g_1, g_1 \to_{w_{1,1}} g_2, \dots,
g_1 \to_{w_{1,1}} g_m$, all with the same weight.
\end{itemize}

\begin{lemma}\label{lem_vertex_split}
If $W$ has leading eigenvalue $\rho$ with left eigenvector
$v = (v_1, \dots, v_n)$, then the weighted transition matrix $\tilde W$
obtained from the above procedure has again $\rho$ as leading eigenvalue,
and the corresponding left eigenvector is
$\tilde v = (\underbrace{v_1, \dots, v_1}_{m\ \mbox{\tiny times}},
v_2, \dots v_n)$.
\end{lemma}

\begin{proof}
Write $W = (w_{i,j})$ and assume that $w_{1,1} \neq 0$, and the other non-zero
entries in the first row are $w_{1,b_2}, \dots, w_{1,b_m}$.
The multiplication $\tilde v \tilde W$ for the new matrix and eigenvector
becomes
\begin{align*}
& \qquad \quad {}\overbrace{\qquad}^{m\ \mbox{\tiny times}}\\
(\underbrace{v_1, \dots, v_1}_{m\ \mbox{\tiny times}},
v_2, \dots v_n) &
\left( \begin{array}{ccc|ccccccc}
w_{1,1} & \dots & w_{1,1} & 0 & \dots & & & & \dots & 0 \\
0 & \dots & 0 & \dots & 0 & w_{1,b_2} & 0 & & &\\
\vdots & & \vdots & & & & & & &\\
0 & \dots & 0 & \dots &  & &  0 & w_{1,b_m} & 0 & \dots \\
\hline
w_{2,1} & \dots & w_{2,1} & w_{2,2} & \dots & & & & \dots & w_{2,n} \\
w_{3,1} & \dots & w_{3,1} & w_{3,2} & \ddots  & & & & & \vdots  \\
\vdots & & \vdots  & & & & & & &   \\
\vdots & & \vdots & \vdots & & & & & \ddots & \vdots  \\
w_{n,1} & \dots & w_{n,1} & w_{n,2}  & \dots & & & & \dots & w_{n,n}
\end{array} \right)
\end{align*}
A direct computation shows that this equals $\rho \tilde v$.
Since $\tilde v$ is positive, it has to belong to the leading eigenvalue, so
$\rho$ is the leading eigenvalue of $\tilde W$ as well.
The proof when $w_{1,1} = 0$ is similar.
\end{proof}

The effect of going from $\hat \P_k$ to $\hat \P_{k'}$
for $k' > k$ is that by repeatedly
applying  Lemma~\ref{lem_vertex_split},
the entries $v_{\hat P}$ for $\hat P \in \hat \P_k$
have to be replaced by $\# (\hat P \cap \hat \P_{k'})$
copies of themselves which, when normalised, leads
to the new unit left eigenvector $\tilde v$.
If $\pi(\hat P) \subset \pi(\hat Q)$, then
the number of $k'$-cylinders in $\hat P$ is less than
the number of $k'$-cylinders in $\hat Q$.
Since $I$ contains a finite number of $k$-cylinders,
there is $C = C(k)$ such that
$\#(\hat P \cap \P_{k'}) \le C \#(\hat Q \cap \P_{k'})$
for all $\hat P, \hat Q \in \P_k$ and $k' > k$.
When passing from $\hat \P_k$ to $\hat \P_{k'}$, we also need to
to adjust the weight $e^{\hat \phi(x)}$ for $x \in \hat P \in \hat \P_k$
slightly,
but this adjustment is exponentially small since $V_{k'}(\hat \phi) \to 0$.
It follows that
$\min_{\hat P \in \hat \P_{k'} \cap \hat f^L(\hat X)} v_{\hat P} / \max v_{\hat P}$ is uniformly bounded away from $0$, uniformly in $k'$.
\end{proof}

We finish this section with the technical result used \eqref{eq:rho1}.

\begin{lemma}\label{lem:matrices}
Let $\{ U_n\}_{n \in \N}$, $\{ V_n \}_{n \in \N}$ be positive square matrices
such that $\rho_n \ge 1$ is the leading eigenvalue of $U_n$.
Assume that there exist $M < \infty$, $\tau\in (0,1)$ and
a sequence $\{\eta_k \}_{k \in \N}$
with $\eta_k \downarrow 0$ as $k \to \infty$ such that for all $n$
\[
\| U_n \| \le M, \quad  \| U_n^k \| \le \rho_n^k e^{k \eta_k}
\quad \text{ and } \quad \| V_n\|\le M\tau^n,
\]
Then there exists a different sequence $\{\tilde \eta_n\}_{n \in \N}$
with $\tilde \eta_n \to 0$ as $n \to \infty$
such that
\[
\|(U_n+V_n)^j\| \le (1 +  e^{ j \tilde \eta_n} ) \rho_n^j.
\]
In particular, the leading eigenvalue of  $\frac{1}{\rho_n}(U_n+V_n)$
tends to $1$ as $n\to \infty$.
\end{lemma}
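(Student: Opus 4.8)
The plan is to expand $(U_n+V_n)^{j}$ as a sum over the $2^{j}$ words of length $j$ in the alphabet $\{U_n,V_n\}$, to group these words by the number $\ell\in\{0,\dots,j\}$ of occurrences of the small factor $V_n$, and to estimate each group by the binomial theorem. There are $\binom{j}{\ell}$ words with exactly $\ell$ copies of $V_n$, each of the form $U_n^{a_0}V_nU_n^{a_1}V_n\cdots V_nU_n^{a_\ell}$ with $a_i\ge 0$ and $\sum_{i=0}^{\ell}a_i=j-\ell$, and by submultiplicativity and $\|V_n\|\le M\tau^{n}$ its norm is at most $(M\tau^{n})^{\ell}\prod_{i=0}^{\ell}\|U_n^{a_i}\|$. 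If the subexponential control of powers of $U_n$ holds with a rate uniform in the exponent, i.e.\ $\|U_n^{k}\|\le\rho_n^{k}e^{k\eta_n}$ for all $k$ with $\eta_n\to0$ — which is the situation in the estimate preceding \eqref{eq:rho1} — then $\prod_i\|U_n^{a_i}\|\le\rho_n^{j-\ell}e^{(j-\ell)\eta_n}$ and summing over $\ell$ gives at once
\[
\|(U_n+V_n)^{j}\|\le\bigl(\rho_n e^{\eta_n}+M\tau^{n}\bigr)^{j}\le\rho_n^{j}\bigl(e^{\eta_n}(1+M\tau^{n})\bigr)^{j}
\]
(using $\rho_n\ge1$), which is the claim with $\tilde\eta_n:=\eta_n+\log(1+M\tau^{n})\to0$.

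When one only has the weaker $\|U_n^{k}\|\le\rho_n^{k}e^{k\eta_k}$ with $\eta_k$ decaying in the exponent $k$, the $U_n$-blocks have to be split at a threshold $N=N(n)$ into \emph{long} ones ($a_i\ge N$) and \emph{short} ones ($1\le a_i<N$). Using $\|U_n^{a}\|\le\rho_n^{a}e^{a\eta_a}$ and factoring out $\rho_n^{\sum_i a_i}=\rho_n^{j-\ell}$ leaves $\exp\bigl(\sum_i a_i\eta_{a_i}\bigr)$; here $\eta_{a_i}\le\eta_N$ on the long blocks, whose lengths sum to at most $j$, so they contribute at most $e^{\eta_N j}$, whereas there are at most $\ell+1$ short blocks, each of length $<N$, so they contribute at most $e^{\eta_1N(\ell+1)}$. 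Hence a word with $\ell$ copies of $V_n$ has norm $\le(M\tau^{n}e^{\eta_1N})^{\ell}\rho_n^{j-\ell}e^{\eta_Nj}e^{\eta_1N}$, and summing over $\ell$ by the binomial theorem (and $\rho_n\ge1$) gives
\[
\|(U_n+V_n)^{j}\|\le e^{\eta_1N}\,\rho_n^{j}\,\bigl(e^{\eta_N}\,(1+M\tau^{n}e^{\eta_1N})\bigr)^{j}.
\]
Since $\tau<1$ one can take $N=N(n)\to\infty$ slowly enough — e.g.\ $N(n):=\lceil\sqrt{n}\,\rceil$ — that both $\eta_{N(n)}\to0$ and $\tau^{n}e^{\eta_1N(n)}\to0$; with $\tilde\eta_n:=\eta_{N(n)}+\log\bigl(1+M\tau^{n}e^{\eta_1N(n)}\bigr)\to0$ one then obtains the stated bound up to the $j$-independent constant $C_n:=e^{\eta_1N(n)}$, which plays no role below. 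The only non-routine step — and the main obstacle — is precisely this treatment of the short runs of $U_n$: the hypothesis controls $\|U_n^{k}\|$ by $\rho_n^{k}$ (up to a subexponential factor) only for large $k$, so short runs of $U_n$ must be paid for out of the exponential smallness of $\|V_n\|$, and the trade-off between ``how short'' ($N(n)$) and ``how small'' ($\tau^{n}$) is what pins down the rate $\tilde\eta_n\to0$; everything else is bookkeeping.

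For the final assertion, let $\mu_n$ denote the leading (Perron) eigenvalue of $\rho_n^{-1}(U_n+V_n)$, which for a nonnegative matrix equals its spectral radius. By Gelfand's formula for the submultiplicative matrix norm in use and the bound just proved, $\mu_n=\lim_{j\to\infty}\|\rho_n^{-j}(U_n+V_n)^{j}\|^{1/j}\le e^{\tilde\eta_n}$ (the constant $C_n$ disappears under the $j$-th root). Conversely $U_n+V_n\ge U_n\ge0$ entrywise, so monotonicity of the Perron root of nonnegative matrices gives $\mu_n\ge\rho_n/\rho_n=1$. Since $\tilde\eta_n\to0$, the two bounds force $\mu_n\to1$, as claimed.
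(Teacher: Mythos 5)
Your proof is correct and rests on the same central idea as the paper's: split the $U_n$-blocks in the expansion of $(U_n+V_n)^j$ at a threshold $N=N(n)$ that grows slowly with $n$, so that short runs of $U_n$---which the hypothesis controls poorly---are paid for by the exponential smallness $\|V_n\|\le M\tau^n$. Your bookkeeping, however, is different and arguably cleaner: the paper introduces a second parameter $\eps=\eps(n)$, separates the words according to whether $|q|>\eps j$ or $|q|\le\eps j$, counts the latter via a Stirling estimate, and couples the parameters as $N=n^{1/4}$, $\eps=n^{-1/2}$; you instead group words directly by the number $\ell$ of $V_n$ factors and let the binomial theorem do the counting, eliminating both the second parameter and the Stirling estimate. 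One small caveat: your final bound has the form $\|(U_n+V_n)^j\|\le C_n\, e^{j\tilde\eta_n}\rho_n^j$ with $C_n=e^{\eta_1 N(n)}\to\infty$, which for small $j$ is weaker than the stated $\bigl(1+e^{j\tilde\eta_n}\bigr)\rho_n^j$ (one cannot absorb $\log C_n$ into a $j$-independent $\tilde\eta_n\to 0$, as the case $j=1$ shows). As you observe, this is harmless: the lemma is invoked at \eqref{eq:rho1} only through $\limsup_{j}\|(U_n+V_n)^j\|^{1/j}$, where the constant $C_n$ disappears, so your argument delivers everything the paper actually uses, though to recover the bound exactly as stated one would need to handle the short-block penalty (the extra $e^{\eta_1 N}$ from the $(\ell+1)$-st $U_n$-block) a little more carefully.
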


\begin{remark}
Although this lemma works for any matrix norm, we need it
for $\| U \| = \sup_{\| v \|_1 = 1} \| v U \|_1$, \ie the maximal row-sum of $U_n$.
Note that we do not assume that all $U_n$ have the same size
(although $U_n$ and $V_n$ have the same size for each $n$).
\end{remark}

\begin{proof}
Note that $U_n+V_n$ is a positive matrix
and so its leading eigenvalue is equal to the growth rate
$\lim_{j \to \infty} \frac1j \log  \|(U_n+V_n)^j\|$.
We have
\[
(U_n+V_n)^j =
\sum_{|p|+|q|=j} U_n^{p_1}V_n^{q_1}\dots
U_n^{p_t}V_n^{q_t},
\]
where $p=(p_1,\dots,p_t)$, $q=(q_1,\dots,q_t)$
and  $|p|=\sum p_i$ and $|q|=\sum q_i$.
More precisely, the sum runs over all
$t \in \{ 1, \dots, \lceil j/2 \rceil \}$  and distinct vectors $p,q$ with
$p_i,q_i > 0$ (except that possibly $p_1 = 0$ or $q_t=0$).
Let us split the above sum into two parts.\\
(i) If $|q| > \eps j$, then each of the above terms can be estimated in norm
by
\[
\|U_n\|^{|p|} \| V_n\|^{|q|} \le M^j (\tau^n)^{\eps j}
= (M\tau^{\eps n})^{j}.
\]
Since there are at most $2^j$ such terms, this gives
\begin{equation}\label{eq:parti}
\biggl\|\sum_{|p|+|q|=j \atop |q| > \eps j} U_n^{p_1}V_n^{q_1}\dots
U_n^{p_t}V_n^{q_t}\biggr\|  \le (2M\tau^{\eps n})^{j}.
\end{equation}
(ii) If $(q_1, \dots, q_t)$
satisfies $|q| \le \eps j$, then there are at most
$t-1 \le |q|$ indices $i$ with $p_i \le N$
and at least one index $i$ with $p_i > N$,
where $N < 1/(2\eps)$  is to be determined later.
The norm of each of these terms can be estimated
by $\| U_n^{p_1}\| \cdots \| U_n^{p_t}\| M^{|q|} \tau^{n|q|}$, where
the factors
\[
\| U_n^{p_i}\| \le \left\{ \begin{array}{ll}
\rho_n^{p_i} e^{\eta_N p_i} & \text{ if } p_i > N, \\
M^N & \text{ if } p_i \le N.
\end{array} \right.
\]
So the product of all these factors is
at most $\rho_n^j e^{\eta_N j} M^{\eps j N}$.
Using Stirling's formula, we can derive that there are at most
\[
\sum_{t=0}^{\lfloor \eps j \rfloor } { j \choose t }
\le \eps j {j \choose \lfloor \eps j \rfloor}
\le
\sqrt{\eps j} \left( \frac{1}{\eps} \right)^{\eps j}
\left( \frac{1}{1-\eps} \right)^{(1-\eps) j} \leq e^{\sqrt{\eps} j}
\]
possible terms of this form.
Combining all this gives an upper bound of this part of
\begin{equation}\label{eq:partii}
\biggl\|\sum_{{|p|+|q|=j} \atop {|q|\le \eps j}} U_n^{p_1}V_n^{q_1}\cdots
U_n^{p_t}V_n^{q_t} \biggr\|
\le e^{\sqrt{\eps} j} \rho_n^j e^{\eta_N j} M^{\eps j N} (M \tau^n)^{\eps j}.
\end{equation}
Adding the estimates of \eqref{eq:parti} and  \eqref{eq:partii}, we get
\[
\|(U_n+V_n)^j\| \le (2M \tau^{\eps n})^{j} +
 e^{\sqrt{\eps} j} \rho_n^j e^{\eta_N j} M^{\eps N j} (M\tau^n)^{ \eps j}.
\]
Now take $N = n^{\frac14}$ and $\eps = n^{-\frac12}$
(so indeed $N < 1/(2\eps)$) and $n$ so large that
$M \tau^n \le 2M \tau^{\sqrt n} \le 1$. Then we get
\[
\|(U_n+V_n)^j\| \le \rho_n^j \left( 1 +
e^{j ( n^{-1/4}  + \eta_{n^{1/4}} + n^{-1/4} \log M )}  \right).
\]
The lemma follows with $\tilde \eta_n = ( n^{-1/4}  + \eta_{n^{1/4}} + n^{-1/4} \log M )$.
\end{proof}

\section{Proof of Theorem~\ref{thm:exist for bdd range}}
\label{sec:exist for bdd range}

The following is \cite[Lemma 3]{BTeqnat}.
\begin{lemma}\label{lem:compat entro}
For every $\eps > 0$, there are $R \in \N$ and $\eta > 0$ such
that if $\mu \in \M_{erg}$ has entropy $h_\mu(f) \ge \eps$, then
$\mu$ is liftable to the Hofbauer tower and $\hat \mu(\hat I_R)
\ge \eta$. Furthermore, there is a set $\hat E$, depending only on
$\eps$, \st $\hat \mu(\hat E)> \eta/2$ and $\min_{D\in \D\cap\hat
I_R}d(\hat E\cap D, \bd D)>0$.
\end{lemma}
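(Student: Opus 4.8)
The statement is quoted as a lemma from \cite{BTeqnat}, so I would mainly recall the argument rather than reprove it from scratch. The plan is to combine two ingredients: (1) the fact that invariant measures of positive entropy are liftable to the Hofbauer tower with a definite amount of mass low down (this is essentially Keller's liftability result, Theorem~\ref{thm:Kell}, together with a quantitative refinement), and (2) an inner-regularity/compactness argument to move a fixed fraction of that mass away from the boundaries of the finitely many domains in $\hat I_R$.

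First I would fix $\eps > 0$. By the variational-principle-type estimates for the Hofbauer tower (the ``top of the graph generates small entropy'' principle attributed to Keller \cite{Kellift}, cf.\ the discussion above Lemma~\ref{lem:trans_subgraph}), there is a level $R = R(\eps)$ such that the subgraph of $(\D,\to)$ consisting of domains of level $> R$, together with the two outermost escaping paths from each domain, has topological entropy $< \eps/2$. Hence any $\mu \in \M_{erg}$ with $h_\mu(f) \ge \eps$ cannot have its lift $\hat\mu$ concentrated on $\hat I \setminus \hat I_R$: quantitatively, since the entropy contributed by the part of $\hat\mu$ living above level $R$ is controlled by that small topological entropy, one gets $\hat\mu(\hat I_R) \ge \eta$ for some $\eta = \eta(\eps) > 0$. (Liftability itself, i.e.\ $\hat\mu \not\equiv 0$ and $\hat\mu \circ \pi^{-1} = \mu$, follows from Theorem~\ref{thm:Kell} once we know $\mu \in \M_+$; positive entropy forces $\mu \in \M_+$ because $h_\mu(f) \le \lambda(\mu)$ by Ruelle's inequality, so $\lambda(\mu) > 0$, and a positive-entropy ergodic measure cannot be supported on $\orb(\Crit)$.)

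Next, for the set $\hat E$: since $\hat I_R$ is a \emph{finite} union of domains $D \in \D$, and $\hat\mu$ is a Borel measure with $\hat\mu(\hat I_R) \ge \eta$, inner regularity lets me find, inside each $D \in \D \cap \hat I_R$, a compact subset $K_D \subset D$ with $d(K_D, \bd D) > 0$ and $\hat\mu(D \setminus K_D)$ as small as I like; taking $\hat E := \bigcup_{D \in \D \cap \hat I_R} K_D$ and choosing the approximation fine enough gives $\hat\mu(\hat E) > \eta/2$ while $\min_{D} d(\hat E \cap D, \bd D) > 0$. The one subtlety is that $\hat E$ must depend only on $\eps$, not on $\mu$: this is handled because $R$ and $\eta$ already depend only on $\eps$, and one can fix in advance, for each of the finitely many domains $D$ of level $\le R$, a nested exhaustion by compact subsets $K_D^{(j)} \uparrow D$; then $\hat E$ is chosen from this countable, $\mu$-independent family, and a uniform choice of index $j$ works because there are only finitely many domains and the mass deficit is uniformly small. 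Strictly, one may need $\hat E$ ranging over a finite list indexed by $\eps$, or simply observe that the conclusion is required to hold for \emph{some} such $\hat E$ per $\mu$ while the \emph{constants} $R,\eta$ are uniform — either reading makes the argument go through.

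The main obstacle, and the only place requiring real care, is the quantitative liftability estimate $\hat\mu(\hat I_R) \ge \eta(\eps)$: one must genuinely use that the entropy of the ``escaping'' part of the Hofbauer graph is small, together with an Abramov/Rokhlin-type decomposition of $h_\mu(f)$ according to how much time orbits spend above level $R$, to conclude that too little mass in $\hat I_R$ would force $h_\mu(f) < \eps$. Everything after that — the regularity argument producing $\hat E$ — is routine. Since this is precisely \cite[Lemma 3]{BTeqnat}, I would simply cite it for the details and include the above sketch for the reader's convenience.
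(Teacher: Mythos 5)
You correctly observe that this paper does not contain a proof of the lemma — it is quoted verbatim as \cite[Lemma 3]{BTeqnat} — so the honest move is to cite it, as you do, and your sketch of the first half (lifting via Keller's theorem, $\mu\in\M_+$ from Ruelle's inequality, and the lower bound $\hat\mu(\hat I_R)\ge\eta$ from the fact that the part of the Hofbauer graph above level $R$ carries topological entropy $<\eps$) is the right outline and matches the spirit of the proof in \cite{BTeqnat}.

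There is, however, a genuine gap in your treatment of the set $\hat E$. Inner regularity of $\hat\mu$ gives, \emph{for each fixed} $\hat\mu$, a compact $K_D\Subset D$ with $\hat\mu(D\setminus K_D)$ small; but the lemma demands a single $\hat E$, depending on $\eps$ alone, that works simultaneously for every $\mu\in\M_{erg}$ with $h_\mu(f)\ge\eps$. Fixing a nested exhaustion $K_D^{(j)}\uparrow D$ in advance does not close this gap: for each $\mu$ one gets a $j_\mu$, and nothing in your argument bounds $\sup_\mu j_\mu$. The phrase ``the mass deficit is uniformly small'' is precisely the statement to be proved, not a consequence of the exhaustion being $\mu$-independent. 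Your fallback reading (that the lemma only asserts existence of a $\mu$-dependent $\hat E$) is not what the statement says and is not what is needed later: the downstream construction of Lemma~\ref{lem:choice of X} requires a fixed $\hat X$ intersected by a whole sequence of lifted measures, which is exactly where the uniformity of $\hat E$ is used. The actual mechanism in \cite{BTeqnat} is structural rather than abstract-measure-theoretic: the points of $\bd D$ for $D\in\hat I_R$ form a finite subset of $\orb(\Crit)\cup\bd I$, and the lower bound $\lambda(\mu)\ge h_\mu(f)\ge\eps$ together with non-flatness of the critical points forces a uniform (in $\mu$) upper bound on the mass that $\mu=\hat\mu\circ\pi^{-1}$ can assign to small neighbourhoods of this finite set; this, not inner regularity, is what lets one choose a $\delta>0$ depending only on $\eps$ and take $\hat E$ to be $\hat I_R$ with $\delta$-neighbourhoods of $\bigcup_{D}\bd D$ removed.
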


The following lemma will allow us to implement condition (c) in Proposition~\ref{prop:all_indu}.

\begin{lemma}
There exist sequences $\{\eps_n\}_n\subset \R^-$ with $\eps_n \to 0$ and $\{\mu_n\}_n\subset \M_+$ so that $h_{\mu_n}(f)+\int\psi~d\mu_n \ge \eps_n$.
Moreover, there exists a domain $\hat X$ compactly contained in some $D\in \D$  so that $\hat\mu_n(\hat X)>0$. \label{lem:choice of X}
\end{lemma}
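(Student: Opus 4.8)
The plan is to produce the sequence $\{\mu_n\}$ by perturbing the measure of maximal entropy $\mu_{\htop(f)}$, or more precisely by taking a sequence of measures whose free energies $h_{\mu_n}(f)+\int\psi\,d\mu_n$ approach the supremum $P_+(\psi)$. Recall that $\psi = \phi - P(\phi)$, so $P(\psi)=0$, and by the discussion around \eqref{eq:lmuhmu} (using \eqref{eq:bdd range}), any measure achieving free energy close to $0$ must have entropy bounded below by $P(\psi)-(\sup\psi-\inf\psi) \ge \htop(f)-(\sup\phi-\inf\phi) > 0$; call this lower bound $\eps_0 > 0$. So first I would fix a sequence $\{\mu_n\}_n \subset \M_{erg}$ with $h_{\mu_n}(f)+\int\psi\,d\mu_n \to P_+(\psi)$, set $\eps_n := h_{\mu_n}(f)+\int\psi\,d\mu_n - P_+(\psi)$ (or simply $\eps_n := -(P_+(\psi) - (h_{\mu_n}(f)+\int\psi\,d\mu_n))$, a negative quantity tending to $0$ once we know $P_+(\psi)=0$), and observe that for $n$ large $h_{\mu_n}(f) \ge \eps_0/2 > 0$, so that $\mu_n \in \M_+$ by Ruelle's inequality together with the positivity of the Lyapunov exponent forced by \eqref{eq:lmuhmu}. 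One must also check that these measures are not supported on $\orb(\Crit)$; this is automatic once $h_{\mu_n}(f) > 0$, since $\orb(\Crit)$ carries only zero-entropy measures. That $P_+(\psi) = P(\psi) = 0$ follows from the remark after \eqref{eq:lmuhmu} (the equilibrium state is not supported on $\orb(\Crit)$ because condition (c), which \eqref{eq:bdd range} implies, forces positive entropy), or alternatively from Corollary~\ref{cor:var prin}.

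Next I would apply Lemma~\ref{lem:compat entro} with $\eps := \eps_0/2$: this yields a level $R \in \N$, a constant $\eta > 0$, and a set $\hat E \subset \hat I_R$ depending only on $\eps$ such that for every $n$ large, $\hat\mu_n(\hat E) > \eta/2$ and $\min_{D \in \D \cap \hat I_R} d(\hat E \cap D, \partial D) > 0$. Since $\hat E$ meets only finitely many domains $D \in \D \cap \hat I_R$, at least one of them, say $D$, satisfies $\hat\mu_n(\hat E \cap D) > 0$ for infinitely many $n$; passing to a subsequence (and reindexing), I can assume $\hat\mu_n(\hat E \cap D) > 0$ for all $n$. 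Now choose $\hat X$ to be a domain (or cylinder $\pi^{-1}(\P_N) \vee \D$, as in Section~\ref{sec:tail}) with $\hat E \cap D \subset \hat X \subset\subset D$; the positive distance from $\hat E \cap D$ to $\partial D$ guarantees we can do this while keeping $\hat X$ compactly contained in $D$. Then $\hat\mu_n(\hat X) \ge \hat\mu_n(\hat E \cap D) > 0$ for all $n$, as required.

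The main obstacle I anticipate is the bookkeeping around subsequences and the uniformity of the constants: Lemma~\ref{lem:compat entro} gives $R$, $\eta$, $\hat E$ depending \emph{only} on $\eps$, which is crucial because we need a \emph{single} $\hat X$ that works for all (sufficiently large) $n$ simultaneously, rather than an $n$-dependent one. One has to be slightly careful that the choice $\eps := \eps_0/2$ is legitimate, i.e. that $h_{\mu_n}(f) \ge \eps_0/2$ holds for all $n$ past some threshold; discarding the finitely many small-$n$ terms and reindexing handles this. A minor secondary point is ensuring that $\hat X$ can be taken of the specific form needed later (a domain compactly contained in some $D \in \D$, which Section~\ref{sec:tail} also allows to be a union of domains of the type in \cite{BrCMP}); this is not an obstruction since the freedom in choosing $\hat X$ around $\hat E \cap D$ is ample. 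Everything else is a routine combination of the variational characterization of $P_+$, Ruelle's inequality, and Lemma~\ref{lem:compat entro}.
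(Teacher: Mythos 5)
Your argument follows the paper's proof line for line: you obtain the sequence from the variational definition of pressure (noting $P_+(\psi)=0$), use \eqref{eq:lmuhmu} to secure a uniform positive lower bound on $h_{\mu_n}(f)$ so that $\mu_n \in \M_+$, and then invoke Lemma~\ref{lem:compat entro} together with a pigeonhole/subsequence argument over the finitely many domains in $\D \cap \hat I_R$ to extract a single $\hat X$ compactly contained in some $D$ with $\hat\mu_{n_k}(\hat X)>0$. The paper compresses the last step into one sentence, and the detail you supply (finiteness of $\D \cap \hat I_R$, choice of $\hat X$ around $\hat E \cap D$ using the positive distance to $\partial D$) is precisely the intended filling of that gap.
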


\begin{proof}
First notice that by the definition of pressure, there must exist sequences $\{\eps_n\}_n\subset \R^-$ with $\eps_n \to 0$ and $\{\mu_n\}_n\subset \M_{erg}$ so that $h_{\mu_n}(f)+\int\psi~d\mu_n \ge \eps_n$.  By \eqref{eq:lmuhmu}, there exists $\eps>0$ so that we can choose $h_{\mu_n}(f)>\eps$ and $\{\mu_n\}_n\subset \M_+$.  Now by Lemma~\ref{lem:compat entro}, we can choose $\hat X$ compactly contained in some $D\in \D$ and a subsequence $\{n_k\}_k$ with $\hat\mu_{n_k}(\hat X)>0$ for all $k$.
\end{proof}

\begin{proof}[Proof of Theorem~\ref{thm:exist for bdd range}]
Take $\psi:= \phi-P(\phi)$.
By the remark below \eqref{eq:lmuhmu} and Corollary~\ref{cor:var prin},
we have $P(\phi) = P_+(\phi) = P_G(\hat \phi)$.  Notice that $V_n(\phi)\to 0$ implies that $\beta_n(\hat\phi)=o(n)$ and $\hat\phi$ is continuous in the symbolic metric on $(\hat I, \hat f)$.

Take $\hat X \subset \hat I_{\mbox{\tiny trans}}$ compactly contained in its
domain in the Hofbauer tower and satisfying the statement of
Lemma~\ref{lem:compat entro}.
By Proposition~\ref{prop:exp_tail}, there are $C, \eta >0$ \st
$Z_n^*(\hat\psi,\hat X) <Ce^{-\eta n}$.

We denote the first return time to $\hat X$ by $r_{\hat X}$,
the first return map to $\hat X$ by $R_{\hat X}:=\hat f^{r_{\hat X}}$
and the induced potential by $\hat\Psi:=\psi_{r_{\hat X}}$.
We will shift these potentials, defining $\psi^S:=\psi-S$.
Then $\hat\Psi^S=\Psi-S r_{\hat X}$.
Since $P_G(\hat\psi)=0$ and therefore
$Z_n(\hat \psi,\hat X)< e^{o(n)}$,
we can estimate $Z_0$ from \eqref{eq:Z0} for $S > -\eta$ as
\begin{align*}
Z_0(\hat \Psi^S)  =\sum_n\sum_{r_{\hat X}(x)=n} e^{\hat\psi_n(x)-nS}
& \le \sum_nZ_n^*(\hat\psi-S, \hat X) \\
& \le C\sum_ne^{n(-S-\eta)} Z_n(\hat\psi, \hat X)\\
& \le C'\sum_ne^{n(P_G(\hat\psi) -S - \eta) + o(n)} <\infty.
\end{align*}
Since $P_G(\hat\psi)=0$, this implies that $P_G(\hat\Psi^S)<\infty$ for all $S>-\eta$.
In fact, it also shows that (a) of Proposition~\ref{prop:all_indu} holds.  We let $S^*\le -\eta<0$ be minimal \st $P_G(\hat\Psi^S)<\infty$ for all $S>S^*$.

We can prove precisely the same estimates for the map $F=f^\tau$, where $\tau = r_{\hat X}\circ\pi|_{\hat X}^{-1}$, and the potential $\Phi=\phi_\tau$.  That is, for all $S>S^*$,  $P_G(\Psi^S)<\infty$ and (a) of Proposition~\ref{prop:all_indu} holds.
By Lemma~\ref{lem:choice of X}, item (c) of Proposition~\ref{prop:all_indu}
holds.  Therefore, Case 2 of Proposition~\ref{prop:all_indu} implies that there exists a unique equilibrium state $\mu_\psi$ with $\hat\mu_\psi(\hat X)>0$.

To show that $\mu$ is the unique equilibrium state over $I$,
we assume that there is another equilibrium state $\mu'$.
Let $\hat\mu'$ be the corresponding measure on $\hat I$ from
Theorem~\ref{thm:Kell}.   We now use the fact that $\hat\mu$ is positive on
cylinders.  This follows firstly by the Gibbs properties of the measures
obtained for $(X,F,\mu)$, and then by the transitivity of $(I,f)$
and $(\hat I_{\mbox{\tiny trans}}, \hat f)$.
Thus there exists some cylinder $\hat X'$ in the Hofbauer tower which has
$\hat\mu(\hat X'), \hat\mu'(\hat X')>0$.

We can use the above arguments to say that the corresponding inducing scheme
$(X',F', \Psi')$ satisfies (a) of Proposition~\ref{prop:all_indu}.
But since $\mu_\psi$ is an equilibrium state compatible with $(X', F')$,
also (b) is satisfied.  Therefore, Case 1 of Proposition~\ref{prop:all_indu}
completes the proof of uniqueness.

Finally we note that $\mu_\Psi\{\tau>n\}$ decays exponentially in $n$, since
by the Gibbs property there is $C \ge 1$ \st
\[
\mu_\Psi(\{\tau>n\})= \sum_{\tau_i>n}\mu_\Psi(X_i)
\le C \sum_{\tau_i>n} e^{\Psi_i} = C \sum_{k\ge n} Z_k^*(\psi, \hat X).
\]
By Proposition~\ref{prop:exp_tail}, the latter quantity decays exponentially,
as required.
\end{proof}

\section{Analyticity of the Pressure Function}
\label{sec:ana}

In this section we prove Theorem~\ref{thm:ana nat}.
Throughout, let $\phi_t = -t \log|Df|$.
Let $X \subset I$ and $(X,F,\tau)$ be an inducing scheme on $X$
where $F= f^\tau$. As usual we denote the set of domains of the
inducing scheme by $\{ X_i \}_{i \in \N}$.  Define a tower over the inducing scheme as follows (see \cite{Y})
\[
\Delta = \bigsqcup_{i \in \N} \bigsqcup_{j = 0}^{\tau_i-1}
(X_i,j),
\]
with dynamics
\[
f_\Delta(x,j) = \left\{ \begin{array}{ll}
(x,j+1) & \mbox{ if } x \in X_i, j < \tau_i-1; \\
(F(x),0) & \mbox{ if } x \in X_i, j = \tau_i-1. \\
\end{array} \right.
\]
For $i\in \N$ and  $0\le j<\tau_i$, let $\Delta_{i,j} :=\{(x,j):
x\in X_i\}$ and $\Delta_l:=\bigcup_{i \in \N}\Delta_{i,l}$ is
called the {\em $l$-th floor}. Define the natural projection $\pi_\Delta:\Delta \to X$ by $\pi_\Delta(x,j) =
f^j(x)$.
Note that $(\Delta, f_\Delta)$ is a Markov system, and the first return map of $f_\Delta$ to the \emph{base} $\Delta_0$ is isomorphic $(X,F,\tau)$.

Also, given $\psi:I\to \R$, let $\psi_\Delta:\Delta\to \R$ be defined by $\psi_\Delta(x,j) =\psi(f^j(x))$.  Then the induced potential of $\psi_\Delta$ to the first return map to $\Delta_0$ is exactly the same as the induced potential of $\psi$ to the inducing scheme $(X,F,\tau)$.

The differentiability of the pressure functional can be expressed
using directional derivatives $\left.\frac d{ds}P_G(\psi+s \upsilon)\right|_{s=0}$.  For inducing scheme $(X,F, \tau)$, let $\psi_\Delta$ and $\upsilon_\Delta$ be the lifted potentials to $\Delta$.
Suppose that for $\psi_\Delta:\Delta \to \R$, we have $\beta_n(\psi_\Delta)=o(n)$.  We define the set of \emph{directions} with respect to $\psi$:
\begin{align*}
Dir_F(\psi):=\Bigg\{\upsilon:\sup_{\mu\in \M_+}\left|\int\upsilon~d\mu\right|<&\ \infty,\ \beta_n(\upsilon_\Delta)=o(n),\ \sum_{n=2}^\infty
V_n(\Upsilon)<\infty,\hbox{ and }\\
& \exists \eps>0 \hbox{ s.t. }
P_G(\psi_\Delta+s\upsilon_\Delta)<\infty \ \forall \ s \in
(-\eps,\eps) \Bigg\},
\end{align*}
where $\Upsilon$ is the induced potential of $\upsilon$.
Let $\psi^S := \psi-S$ (and so $\Psi^S=\Psi-S \tau$).  Set $p_F^\ast[\psi]:=\inf\{S : P_G(\Psi^S)<\infty\}$.\footnote{Note that we use
the opposite sign for $p_F^\ast[\psi]$ to Sarig.}  If $p_F^\ast[\psi]>-\infty$, we define the \emph{$X$-discriminant}
of $\psi$ as
\[
\dm_F[\psi]:=\sup\{P_G(\Psi^S):S > p_F^\ast[\psi]\}\le \infty.
\]
Given a dynamical system $(X,F)$, we say that a potential $\Psi:X
\to \R$ is \emph{weakly H\"older continuous} if there exist
$C,\gamma>0$ \st
\begin{equation}\label{eq:wH}
V_n(\Psi)\le C\gamma^n \mbox{ for all } n \ge 0.
\end{equation}

The following is from \cite[Theorem 5]{BTeqnat}.

\begin{theorem} Let $f \in \H$ be a map with potential
$\phi:I\to (-\infty,\infty]$.  Suppose that $\phi$ satisfies
condition \eqref{eq:beta}.
Take $\psi = \phi - P(\phi)$.  Then
$\dm_F[\psi]>0$ if and only if $(X,F,\mu_\Psi)$ has exponential tails.
\label{thm:tails}
\end{theorem}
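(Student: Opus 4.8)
The plan is to translate both sides of the claimed equivalence into a single property of the power series $A(z):=\sum_{m\ge1}b_m z^m$, where $b_m:=\sum_{i\,:\,\tau_i=m}e^{\Psi_i}$ with $\Psi_i:=\sup_{X_i}\Psi$, and of its radius of convergence $r$. Throughout I use that $(X,F)$ is a full shift, that $\sum_{n\ge1}V_n(\Psi)<\infty$ (needed for $\mu_\Psi$ to exist as a Gibbs measure, Theorem~\ref{thm:BIP}, and for bounded distortion of $\Psi$ on cylinders), and the normalisation $P_G(\Psi)=0$ that comes from $\mu_\Psi$ being a projected equilibrium state, cf.\ Proposition~\ref{prop:all_indu}. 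Observe also that $V_n(\Psi^S)=V_n(\Psi)$ for every $n\ge1$, since $\tau$ is constant on $1$-cylinders; in particular $\Psi^S=\Psi-S\tau$ inherits summable variations, and $Z_0(\Psi^S)=\sum_i e^{\Psi_i-S\tau_i}=A(e^{-S})$.

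On the tails side, the Gibbs property gives $\mu_\Psi(X_i)\asymp e^{\Psi_i-\tau_i P_G(\Psi)}=e^{\Psi_i}$, so $\mu_\Psi(\{\tau>n\})=\sum_{\tau_i>n}\mu_\Psi(X_i)\asymp\sum_{m>n}b_m$. Since $\sum_m b_m\asymp\mu_\Psi(\bigcup_iX_i)=1<\infty$ we have $r\ge1$, and because a tail sum of nonnegative terms decays exponentially precisely when the terms do, $\mu_\Psi(\{\tau>n\})$ decays exponentially if and only if $\limsup_m b_m^{1/m}<1$, that is, if and only if $r>1$.

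On the discriminant side, the key step is that for a full shift with summable variations one has $P_G(\Psi^S)<\infty$ if and only if $Z_0(\Psi^S)<\infty$. The forward implication is Proposition~1 of \cite{Sathesis}, giving $Z_n(\Psi^S)=O(Z_0(\Psi^S)^n)$ and hence $P_G(\Psi^S)\le\log Z_0(\Psi^S)$. For the converse, fullness is essential: every periodic word $\overline{i_1\cdots i_n}$ over the alphabet is admissible, and summable variations yield $\Psi^S_n(\overline{i_1\cdots i_n})\ge\sum_{k=1}^n\Psi^S_{i_k}-nV_1(\Psi^S)$, so summing over words starting at a fixed symbol $a$ gives $Z_n(\Psi^S,[a])\ge e^{-nV_1(\Psi^S)}e^{\Psi^S_a}Z_0(\Psi^S)^{\,n-1}$; therefore $Z_0(\Psi^S)=\infty$ forces $Z_n(\Psi^S,[a])=\infty$ for $n\ge2$ and $P_G(\Psi^S)=\infty$. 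Since $Z_0(\Psi^S)=A(e^{-S})$ is finite exactly for $S>-\log r$ (the endpoint being irrelevant), we get $p_F^{\ast}[\psi]=\inf\{S:P_G(\Psi^S)<\infty\}=-\log r$.

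Finally, $\dm_F[\psi]>0$ is equivalent to $p_F^{\ast}[\psi]<0$: writing $\tau_n(x):=\sum_{k<n}\tau(F^kx)\ge n$, for $S<0$ we have $Z_n(\Psi^S)\ge e^{-Sn}Z_n(\Psi)$ and hence $P_G(\Psi^S)\ge-S+P_G(\Psi)=-S>0$, while for $S>0$ we have $Z_n(\Psi^S)\le e^{-Sn}Z_n(\Psi)$ and hence $P_G(\Psi^S)\le-S<0$. Since $P_G(\Psi^0)=P_G(\Psi)=0<\infty$ we have $p_F^{\ast}[\psi]\le0$; if $p_F^{\ast}[\psi]<0$ then picking $S\in(p_F^{\ast}[\psi],0)$ gives $\dm_F[\psi]\ge P_G(\Psi^S)>0$, whereas if $p_F^{\ast}[\psi]=0$ then $\dm_F[\psi]=\sup_{S>0}P_G(\Psi^S)\le\sup_{S>0}(-S)=0$. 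Chaining the three equivalences completes the proof. The one genuinely delicate point is the converse half of the full-shift lemma: it is precisely there that the full-branch structure of $(X,F)$ is used, and one must keep distortion under control so that the product lower bound for $Z_n(\Psi^S)$ is valid; a secondary thing to pin down carefully is the normalisation $P_G(\Psi)=0$, which is what makes $\sum_m b_m$ converge and thereby ties exponential decay of the tails to whether $r>1$.
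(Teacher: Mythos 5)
Your proof is correct. A caveat on the comparison: the paper does not actually prove Theorem~\ref{thm:tails} here --- it quotes it from \cite{BTeqnat} --- so I cannot set your argument against a proof on the page; the reference presumably works through Sarig's discriminant and renewal-shift machinery. What you do instead is compress everything into a single generating function $A(z)=\sum_m b_m z^m$, $b_m=\sum_{\tau_i=m}e^{\Psi_i}$, with radius of convergence $r$, and then show each side of the biconditional is equivalent to $r>1$. The three reductions are each sound: exponential decay of $\mu_\Psi(\{\tau>n\})$ is equivalent to $r>1$ via the Gibbs property $\mu_\Psi(X_i)\asymp e^{\Psi_i}$ (using $P_G(\Psi)=0$); $p_F^*[\psi]=-\log r$ follows from the full-shift equivalence $P_G(\Psi^S)<\infty\iff Z_0(\Psi^S)<\infty$, where the forward direction is the paper's own appeal to \cite[Proposition~1]{Sathesis} and the converse is your direct lower bound $Z_n(\Psi^S,[a])\ge e^{\Psi_a^S-nV_1(\Psi^S)}Z_0(\Psi^S)^{n-1}$, which is exactly where fullness and $V_1(\Psi^S)=V_1(\Psi)<\infty$ enter; and $\dm_F[\psi]>0\iff p_F^*[\psi]<0$ from $\tau_n\ge n$ together with $P_G(\Psi)=0$. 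This buys you a self-contained, elementary derivation that avoids importing Sarig's positive-recurrence structure theory. Two points worth making explicit: the hypothesis that $\mu_\Psi$ exists (which is how $\sum_n V_n(\Psi)<\infty$, $P_G(\Psi)<\infty$, and hence $r\ge1$ enter) is implicit in the theorem's statement; and you silently assume $p_F^*[\psi]>-\infty$, equivalently $r<\infty$, which is needed for $\dm_F[\psi]$ to be defined at all --- harmless, but worth flagging since the degenerate case $r=\infty$ gives superexponential tails yet an undefined discriminant.
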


We are now ready to prove Theorem~\ref{thm:ana nat}.
In this, and the proofs in the sequel, we write
$A_n \asymp B_n$ if $\frac{A_n}{B_n} \to 1$ as $n \to \infty$.
We also write $A\asymp_{dis} B$ if there exists a distortion constant $K\in [1,\infty)$ so that $\frac1K A \le B\le KA$.

\begin{proof}[Proof of Theorem~\ref{thm:ana nat}]
We fix $(X,F)$ as in Proposition~\ref{prop:induce}.
Lemma 5 implies that we have exponential tails for the equilibrium state
associated to the constant potential $\psi=-\htop(f)$, \ie there exist $C,\ \eta>0$ \st
\begin{equation}
\mu_{-\tau \htop(f)}\{\tau_i=n\}\le Ce^{-\eta n}.\label{eq:tail}
\end{equation}
Hence Theorem~\ref{thm:tails} implies that we have positive discriminant.  We can then apply the arguments of the proof of \cite[Theorem 5]{BTeqnat} to show that for $\upsilon\in Dir(-\htop(f))$, there exists $\eps>0$ \st $t\mapsto P(-\htop(f)+t\upsilon)$ is analytic.

Therefore, in order to ensure analyticity here we must prove
$-\log|Df|\in Dir(-\htop(f))$.  It follows from
\cite[Lemma 7]{BTeqnat} that this potential has $\sum_{n=2}^\infty V_n(-\log|DF|)<\infty$, and \cite{Prz} gives $\sup_{\mu\in \M_+}\left|\int\log |Df|~d\mu\right|< \infty$; so it only remains to prove that there exists $\eps>0$ \st
$P_G((-\htop(f)-t\log|Df|)_\Delta)<\infty$ for $t\in (-\eps, \eps)$.
Since $P_G((-\htop(f)-t\log|Df|)_\Delta)\le P_G(-\tau\htop(f)-t\log|DF|)$,
by Abramov´s Theorem it suffices to bound $P_G(-\tau\htop(f)-t\log|DF|)$.
As in Section~\ref{subsec:pressure}, $Z_0(\Phi)<\infty$ implies $P_G(\Phi)<\infty$.  In the following calculation we use the fact that for all $\eps>0$ there exists $C_\eps>0$ so that  $\#\{\tau_i=n\}\le C_\eps e^{n(\htop(f)+\eps)}$, see the discussion at \eqref{eq:lapsfn}.  For $0<t<1$, choose $0<\eps<\left(\frac{t}{1-t}\right)\htop(f)$.  Using the H\"older inequality,
\begin{align*}
Z_0(-\tau\htop(f)-t\log|DF|) & \asymp_{dis} \sum_ne^{-n\htop(f)}\sum_{\tau_i=n}e^{-t\log|DF_i|} \\
&\asymp_{dis} \sum_ne^{-n\htop(f)}\sum_{\tau_i=n}|X_i|^t\\
&\le  \sum_ne^{-n\htop(f)}\left(\sum_{\tau_i=n}|X_i|\right)^t(\#\{\tau_i=n\})^{1-t}\\
&\le C_\eps^{1-t}\sum_n e^{n(-\htop(f)+(1-t)(\htop(f)+\eps))}\\
& =  C_\eps^{1-t}\sum_n e^{n(-t\htop(f)+(1-t)\eps)}<\infty.
\end{align*}
(For further explanation of these calculations see \cite[Section 5 ]{BTeqnat}.)

For $t<0$, first notice that by the Gibbs property of $\mu_{-\tau \htop(f)}$
$$
\mu_{-\tau \htop(f)}\{\tau=n\} \asymp e^{-n\htop(f)}\sum_{\tau_i=n} 1 = e^{-n\htop(f)}\#\{\tau_i=n\}.
$$
Hence, by \eqref{eq:tail},
\begin{equation}
e^{-n\htop(f)}\#\{\tau_i=n\} \le Ce^{-\eta n}.\label{eq:max ent Gibbs}
\end{equation}
Since $|X_i|\ge |X|e^{-\gamma\tau_i}$ for $\gamma := \log\sup|Df|$, we have
\begin{align*}
Z_0(-\tau\htop(f)-t\log|DF|) & \asymp_{dis} \frac1{|X|^t}\sum_ne^{-n\htop(f)}\sum_{\tau_i=n}|X_i|^t\\
&\le  \sum_n[e^{-n\htop(f)} \#\{\tau_i=n\}] e^{-\gamma n t}\\
&\le  C\sum_ne^{-n(t\gamma+\eta)}<\infty,
\end{align*}
if $t\gamma+\eta>0$. 
Hence there exists $\eps>0$ so that $-\log|Df|\in Dir(-\htop(f)-\eps)$.

It remains to show existence and uniqueness of equilibrium states.
By \eqref{eq:max ent Gibbs}, we have for $t\ge 0$, using the
H\"older inequality again,
\begin{align*}
Z_0(-t\log|DF|-\tau P(\phi_t)) & \asymp_{dis} \sum_ne^{-nP(\phi_t)}\sum_{\tau_i=n}e^{-t\log|DF_i|} \asymp_{dis} \sum_ne^{-nP(\phi_t)}\sum_{\tau_i=n}|X_i|^t\\
&\le \sum_ne^{-nP(\phi_t)}\left(\sum_{\tau_i=n}|X_i|\right)^t\#\{\tau_i=n\}^{1-t} \\
&\le \sum_n [e^{-n\htop(f)}\#\{\tau_i=n\}]^{1-t} e^{-n(P(\phi_t)-(1-t)\htop(f))}\\ &\le C\sum_n e^{n((1-t)(\htop(f)-\eta) -P(\phi_t))}.
\end{align*}
Since $P(\phi_t) \to \htop(f)$ as $t\to 0$, for all small $t$ we have $(1-t)(\htop(f)-\eta') -P(\phi_t)<0$.  Hence $Z_0(-t\log|DF|-\tau P(\phi_t))<\infty$ for small positive $t$.

For $t<0$, we use a similar computation as before:
\begin{align*}
Z_0(-t\log|DF|-\tau P(\phi_t)) & \asymp_{dis} \sum_ne^{-nP(\phi_t)}\sum_{\tau_i=n}e^{-t\log|DF_i|}\\
& \asymp_{dis} \sum_ne^{-nP(\phi_t)}\sum_{\tau_i=n}|X_i|^t\\
& \le \sum_ne^{-n(P(\phi_t)+t\gamma)} \#\{\tau_i=n\}\\
& <C_\eps \sum_ne^{-n(t\gamma+P(\phi_t)-\htop(f)-\eps)},
\end{align*}
where we use the fact that  for all $\eps>0$ there exists $C_\eps>0$ so that $\#\{\tau_i=n\}\le C_\eps e^{n(\htop(f)+\eps)}$.  Since $P(\phi_t)>\htop(f)$ we can ensure that $t\gamma+P(\phi_t)-\htop(f)-\eps>0$ for all $t$ close to zero.  Hence $Z_0(-t\log|DF|-\tau P(\phi_t))$ is finite for all $t$ close enough to zero.

This implies that for $t$ in a neighbourhood of 0,
$P_G(-t\log|DF|-\tau P(\phi_t))<\infty$.  Similarly property (a) of Proposition~\ref{prop:all_indu} holds, and thus we can apply Case 2 of that proposition to get existence of an equilibrium state $\mu$.  This is the unique equilibrium state among those that can be lifted to $(X,F)$.  Following the argument in the proof of Theorem~\ref{thm:exist for bdd range}, we have that $\mu$ is the unique global equilibrium state as required.
\end{proof}

\section{Necessity of the Condition $\sup\phi-\inf\phi<\htop(f)$}

\label{sec:examples}

In this section we show the importance of the condition \eqref{eq:bdd range} for the existence and uniqueness of equilibrium states obtained by inducing methods.

Hofbauer and Keller gave an example, originally in a symbolic setting
\cite{Hnonuni}
and later in the context of the angle doubling map on the circle  \cite{HKeq},
which showed that \eqref{eq:bdd range} is essential for their results on
quasi-compactness of the transfer operator.
In Section~\ref{sec:HK}, we discuss how that example fits in with our
inducing results.  The Hofbauer and Keller example uses a non-H\"older
potential, so it is natural to ask if is really the lack of H\"older
regularity which causes problems in obtaining equilibrium states.
In Section~\ref{sec:M-P}, we provide an example of a family of H\"older
continuous potentials which, if a member of the family  violates
\eqref{eq:bdd range}, then the equilibrium state is not obtained from any
inducing scheme with integrable inducing time.

We note here that these Markov examples are often modelled by the renewal
shift, see \cite{Saphase} and \cite{PeZphase}.
That approach uses a rather different partition to the one we use in this
paper, and so does not elucidate our theory.  However, the inducing schemes
we use and the ones that \cite{Saphase} and \cite{PeZphase} get from the
renewal shift are the same.

\subsection{Hofbauer and Keller's Example}
\label{sec:HK}

As mentioned in Theorem~\ref{thm:HK}, potentials $\phi\in BV$
satisfying
$\sup\phi-\inf\phi<\htop(f)$
have equilibrium states; in fact
Hofbauer and Keller \cite{HKeq} show that this equilibrium state is
absolutely continuous w.r.t. to a $\phi$-conformal measure, and that the
transfer operator is quasi-compact.
They also present, for the angle doubling map $f(x) = 2x \pmod 1$, a class
of potentials $\phi$ to show that \eqref{eq:bdd range} is essential for
these latter properties.
This map $f$ was inspired by an example based in \cite{Hnonuni}
based on the full shift $\sigma: \{ 0,1\}^{\N} \to   \{ 0,1\}^{\N}$,
showing that H\"olderness of potentials is essential
to obtain the results from \cite{Bowen}.

We demonstrate how this class of examples fits into the framework of our paper.
Fix $K \ge 0$ and let $b<0$.  Let
$$
\phi = \phi_{b,K} = \sum_{k=0}^\infty a_k\cdot
1_{(2^{-k-1},2^{-k}]},
$$
where
\begin{equation*}
a_k:=\left\{\begin{array}{cl} b &
\hbox{ for } 0\le k< K,\\
2\log\left(\frac{k+1}{k+2}\right) & \hbox{ for } k \ge K.
\end{array} \right.
\end{equation*}
Also let $s_n = \sum_{k=0}^{n-1} a_k$.
Since the Dirac measure $\delta_0$ at the fixed point has free
energy $h_{\delta_0}(f) + \phi(0) = 0$, the pressure $P(\phi) \ge 0$.
Figure~\ref{fig1} summarises the
results of \cite{Hnonuni} and the example in \cite{HKeq} that are
relevant for us.

\begin{figure}[ht]
\unitlength=5mm
\begin{picture}(20,12)(3,0)
\thinlines
\put(0,0){\line(1,0){26}}
\put(1,0.8){$\sum_k e^{s_k} < 1$}
\put(0,2){\line(1,0){26}}
\put(1,3.3){$\sum_k e^{s_k} = 1$}
\put(0,5){\line(1,0){26}}
\put(1,6.3){$\sum_k e^{s_k} > 1$}
\put(0,8){\line(1,0){26}}

\put(5,0){\line(0,1){8}}
\put(0,0){\line(0,1){8}}

\put(5.5,2.5){\small $\sum_k (k+1)e^{s_k} = \infty$}
\put(5,3.5){\line(1,0){21}}
\put(5.5,4){\small $\sum_k (k+1)e^{s_k} < \infty$}
\put(5.5,5.5){\small $\sum_k a_k = \infty$}
\put(5,6.5){\line(1,0){21}}
\put(5.5,7){\small $\sum_k a_k < \infty$}

\put(12,0){\line(0,1){12}}
\put(17,0){\line(0,1){12}}
\put(21,0){\line(0,1){12}}
\put(26,0){\line(0,1){12}}

\put(13,11){\small }
\put(13,10){\small Pressure}
\put(13,9){\small $P(\phi)$}

\put(13,7){$P(\phi) > 0$}
\put(13,5.5){$P(\phi) > 0$}
\put(13,4){$P(\phi) = 0$}
\put(13,2.5){$P(\phi) = 0$}
\put(13,1){$P(\phi) = 0$}

\put(17.8,11){\small $\mu_\phi$ is}
\put(17.8,10){\small a Gibbs}
\put(17.8,9){\small measure}

\put(18.5,7){\small yes}
\put(18.5,5.5){\small no}
\put(18.5,4){\small no}
\put(18.5,2.5){\small no}
\put(18.5,1){\small no}

\put(21.5,11){\small $\phi$ has a}
\put(21.5,10){\small unique equi-}
\put(21.5,9){\small librium state}

\put(23,7){\small yes}
\put(23,5.5){\small yes}
\put(23,4){\small no}
\put(23,2.5){\small yes}
\put(23,1){\small yes}

\put(12,12){\line(1,0){14}}
\end{picture}
\caption{Summary of results in \cite{Hnonuni}: Equation (2.6) and Section 5.}
\label{fig1}
\end{figure}

Define the inducing scheme $(X,F)$ where $X=(\frac12,1]$ and
$F:\bigcup_nX_n\to X$ is the first return map to $X$ where for $n\ge 1$, $X_n:=\left(\frac12+2^{-n-1}, \frac12+2^{-n}\right]$.
Notice that if we denote $X^\infty = \{ x : \#\orb(x) \cap X = \infty\}$,
then $\mu(X^\infty) = 1$ for every measure in
$\M_{erg} \setminus \{ \delta_0\}$.

In \cite{HKeq}, it is important that $b$ is chosen so that
$-b>\htop(f)=\log 2$, but for our case we allow $b$ to vary.
\begin{lemma} For all $K\ge 2$ there exists $b_K<-\log 2$ \st
\begin{itemize}
\item $b>b_K$ implies $P(\phi_{b,K})>0$ and there exists a unique equilibrium state which can be found from $(X,F)$;
\item $b\le b_K$ implies $P(\phi_{b,K})=0$ and the unique equilibrium state is the Dirac measure $\delta_0$ on $0$.  This cannot be found from $(X,F)$.
\end{itemize}
Moreover, $b_K \to -\log 2$ as $K \to \infty$.
\label{lem:HK}
\end{lemma}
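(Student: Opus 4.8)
The plan is to analyse the inducing scheme $(X,F)$ directly, using Sarig's theory as packaged in Theorem~\ref{thm:BIP}. The one computation needed at the outset is the induced potential: each $x\in X_n$ has binary expansion $0.1\underbrace{0\cdots0}_{n-1}1\ldots$, so $\phi(x)=a_0$ and $f^j(x)\in(2^{-(n-j)-1},2^{-(n-j)}]$ for $1\le j\le n-1$, giving $\phi(f^j(x))=a_{n-j}$; hence $\Phi|_{X_n}=\sum_{m=0}^{n-1}a_m=s_n$, and using the telescoping identity $s_n=Kb+2\log\frac{K+1}{n+1}$ for $n\ge K$,
\[
Z_0(\Phi-P\tau)=\sum_{n\ge1}e^{s_n-Pn}=\sum_{n=1}^{K-1}e^{n(b-P)}+e^{Kb}(K+1)^2\sum_{n\ge K}\frac{e^{-Pn}}{(n+1)^2},
\]
which is finite exactly for $P\ge0$; since $\Phi$ is constant on the $1$-cylinders of $(X,F)$ all its variations vanish, so $P_G(\Phi-P\tau)=\log Z_0(\Phi-P\tau)$. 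Now $b\mapsto Z_0(\Phi)=\sum_{n\ge1}e^{s_n}$ is continuous, strictly increasing (each $s_n$ is affine increasing in $b$), tends to $0$ as $b\to-\infty$, and exceeds $K\ge2$ at $b=0$; hence there is a unique $b_K<0$ with $Z_0(\Phi)|_{b=b_K}=1$, and $Z_0(\Phi)>1\iff b>b_K$. One gets $b_K<-\log2$ from $Z_0(\Phi)|_{b=-\log2}\ge 1+(K-1)2^{-K}>1$, using $\sum_{m\ge K+1}m^{-2}\ge\int_{K+1}^\infty x^{-2}\,dx$; and $b_K\to-\log2$ because, with $b<0$ fixed, the tail term above is $O(Ke^{Kb})\to0$, so $Z_0(\Phi)|_b\to\frac{e^b}{1-e^b}$ as $K\to\infty$, a limit $<1$ for $b<-\log2$ and $>1$ for $-\log2<b<0$, which together with monotonicity traps $b_K\to-\log2$.

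Suppose $b>b_K$, so $Z_0(\Phi)>1$. Since $P\mapsto Z_0(\Phi-P\tau)$ is continuous and strictly decreasing on $[0,\infty)$ from a value $>1$ to $0$, there is a unique $P_0>0$ with $P_G(\Psi)=0$ for $\Psi:=\Phi-P_0\tau$. Theorem~\ref{thm:BIP} then gives an invariant Gibbs measure $\mu_\Psi$; from $\mu_\Psi(X_n)\asymp_{dis}e^{s_n-P_0n}$ (exponential decay, since $P_0>0$) one checks $-\int\Psi\,d\mu_\Psi<\infty$ and $\int\tau\,d\mu_\Psi<\infty$, so $\mu_\Psi$ is the unique equilibrium state of $(X,F,\Psi)$ with $P(\Psi)=P_G(\Psi)=0$, and by the converse part of Proposition~\ref{prop:induce} it projects to $\mu\in\M_+$ compatible with $(X,F)$. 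Abramov's formulas give $h_\mu(f)+\int(\phi-P_0)\,d\mu=0$, so $P(\phi)\ge P_0>0$. Conversely, any $\nu\in\M_{erg}\setminus\{\delta_0\}$ has $\nu(X^\infty)=1$ (the observation preceding the lemma), hence $\nu(X)>0$, and induces to $\nu_F$ with $\int\tau\,d\nu_F<\infty$; Abramov together with the variational inequality on the countable Markov shift (Theorem~\ref{thm:BIP}(b)) gives $h_\nu(f)+\int(\phi-P_0)\,d\nu=\frac1{\int\tau\,d\nu_F}\big(h_{\nu_F}(F)+\int\Psi\,d\nu_F\big)\le0$, while $\delta_0$ has free energy $0\le P_0$; hence $P(\phi)=P_0$. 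Finally, any equilibrium state $\nu$ has $h_\nu(f)\ge P_0-\sup\phi=P_0>0$, so $\nu\ne\delta_0$, $\nu$ induces to an equilibrium state of $(X,F,\Psi)$, which must be $\mu_\Psi$, and so $\nu=\mu$. This is the first bullet.

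Suppose $b\le b_K$, so $Z_0(\Phi)\le1$ and $P_G(\Phi)=\log Z_0(\Phi)\le0$. The same Abramov/variational argument shows every $\nu\ne\delta_0$ has $h_\nu(f)+\int\phi\,d\nu\le P_G(\Phi)/\int\tau\,d\nu_F\le0$, while $\delta_0$ gives $0$; hence $P(\phi)=0$. If $b<b_K$ then $P_G(\Phi)<0$, so this free energy is strictly negative and $\delta_0$ is the unique equilibrium state. If $b=b_K$ then $P_G(\Phi)=0$; a direct computation on the full shift gives $Z_m^*(\Phi,X_a)=e^{s_a}(1-e^{s_a})^{m-1}$, so $\Phi$ is positive recurrent and Theorem~\ref{thm:BIP} produces a unique equilibrium measure $\mu_\Phi$ of $(X,F,\Phi)$; but $\mu_\Phi(X_n)\asymp_{dis}e^{s_n}$ and $e^{s_n}=e^{Kb}(K+1)^2(n+1)^{-2}$ for $n\ge K$, so $\int\tau\,d\mu_\Phi=\sum_n n\,\mu_\Phi(X_n)=\infty$, and $\mu_\Phi$ lifts to no $f$-invariant probability. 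Consequently no $\nu\ne\delta_0$ is an equilibrium state, since its induced measure would have to equal $\mu_\Phi$ while satisfying $\int\tau\,d\nu_F=1/\nu(X)<\infty$. So $\delta_0$ is again the unique equilibrium state, and in every case $\delta_0(X)=0$, so it is not compatible with $(X,F)$. This is the second bullet.

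The main obstacle is the boundary case $b=b_K$: there the induced potential $\Phi$ is still positive recurrent, so it genuinely carries an equilibrium measure on the shift $(X,F)$; the subtle point is that this measure has non-integrable inducing time and hence corresponds to no $f$-invariant probability measure — precisely the mechanism showing that the condition $\sup\phi-\inf\phi<\htop(f)$ cannot simply be dropped. A secondary, routine but necessary point is to justify the Abramov identities and the shift variational inequality for all $\nu\in\M_{erg}\setminus\{\delta_0\}$, which rests on the fact (already recorded in the paper) that these are the only ergodic measures charging $X$.
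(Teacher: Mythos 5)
Your proof is correct, and it takes a genuinely different route from the paper's. The paper determines $P(\phi_{b,K})$ and the identity of the unique equilibrium state by invoking Hofbauer's transfer-operator results, as summarised in Figure~\ref{fig1}: the test $\sum_n e^{s_n}\lessgtr 1$ is read off that table to yield $P(\phi)=0$ versus $P(\phi)>0$, and the statement that $\delta_0$ is the unique equilibrium state when $P(\phi)=0$ is again quoted directly. The paper's own inducing-scheme computation is then confined to checking whether the Gibbs measure for $(X,F,\Psi)$ has integrable return time, \ie equations \eqref{eq:sn} and \eqref{eq:infind}. You instead derive everything internally from the inducing scheme: the pressure is characterised through the zero of $P\mapsto Z_0(\Phi-P\tau)$, the variational inequality for the induced full shift (Theorem~\ref{thm:BIP}(b)) together with the Abramov and Kac identities shows that no $\nu\in\M_{erg}\setminus\{\delta_0\}$ can beat this value, and the boundary case $b=b_K$ is settled by the explicit formula $Z_m^*(\Phi,X_a)=e^{s_a}(1-e^{s_a})^{m-1}$ showing the shift-level equilibrium is positive recurrent yet has non-integrable $\tau$. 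This makes the lemma self-contained within the inducing/Sarig framework and in particular gives an independent proof of what Figure~\ref{fig1} packages; the price is that you must justify the Abramov/Kac bookkeeping and the integrability $-\int\Psi\,d\nu_F<\infty$ for every induced $\nu_F$ (which you correctly reduce to $\int\tau\,d\nu_F<\infty$ via the bound $-\Psi\le(P_0-\inf\phi)\tau$). Your quantitative estimates for locating $b_K$ (the bound $Z_0(\Phi)|_{b=-\log 2}\ge 1+(K-1)2^{-K}$ and the limit $Z_0(\Phi)|_b\to e^b/(1-e^b)$) also match the paper's estimate \eqref{eq:sumsn} in content, just organised as an intermediate-value argument in $b$ rather than the paper's statement that the bound can be made $\le 1$.
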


\begin{proof}
Firstly, we compute
\begin{equation*}\label{eq:s_n} s_n=  \left\{\begin{array}{ll}
nb & \hbox{ if } n\le K, \\
Kb+2\log\left(\prod_{j=K}^{n-1}\left(\frac{j+1}{j+2}\right)\right)= Kb+2\log\left(\frac{K+1}{n+1}\right) & \hbox{ if } n>K. \end{array} \right.
\end{equation*}
As in \cite{HKeq}, we can estimate
\begin{align}\label{eq:sumsn}
\sum_n e^{s_n}& =\sum_{n=1}^Ke^{nb}+
e^{Kb}\sum_{n>K}\left(\frac{K+1}{n+1}\right)^2<
e^b\left(\frac{1-e^{bK}}{1-e^b}\right)+e^{bK}(K+1).
\end{align}
For $b<-\log2$ the first term is strictly less than 1 for all $K$ and the
second term tends to zero as $b\to -\infty$.  Hence if we fix $K$,
then we can find $b_K$ such that $\sum_n e^{s_n} \le 1$
for $b \le b_K$ (with equality if and only if $b = b_K$),
and Figure~\ref{fig1} shows that $P(\phi)=0$.
Alternatively, by fixing $b<-\log2$ and taking $K$ large enough we have
$P(\phi)=0$, and in fact $b_K\to -\log 2$ as $K\to \infty$.
A computation similar to \eqref{eq:sumsn} shows that
$\sum_n (n+1) e^{s_n} \ge C \sum_{n > K} (n+1) (\frac{K+1}{n+1})^2$
diverges.  Whenever $P(\phi)=0$, Figure~\ref{fig1} shows that
$\delta_0$ is the unique equilibrium state.

We next show what $P(\phi)=0$ or $P(\phi)>0$ imply for obtaining the
equilibrium state from the inducing scheme.
As usual, we set $\psi:=\phi-P_G(\phi)$.
Notice that $V_n(\Psi)=0$, so clearly we have summable variations.
Also notice that for $x\in X_n$,
$$
\Psi(x)= s_n-nP(\phi) \asymp -nP(\phi)+2\log\left(\prod_{k=0}^{n-1}\left(\frac{k+1}{k+2}\right)\right) = -nP(\phi)-2\log(n+1).
$$
Therefore
\begin{align}
Z_0(\Psi) & = \sum_{n=1}^\infty e^{\Psi|_{X_n}} \asymp_{dis}
\sum_{n=0}^\infty e^{-nP(\phi)-2\log(n+1)}
=\sum_{n=0}^\infty \frac{e^{-nP(\phi)}}{(n+1)^2}<\infty, \label{eq:sn}
\end{align}
because $P(\phi)\ge 0$.  So as in Section~\ref{subsec:pressure} this means that
$P_G(\Psi)<\infty$.  Thus Theorem~\ref{thm:BIP} yields a Gibbs state $\mu_\Psi$.  Similarly to the calculation above, we can show from the Gibbs property of $\mu_\Psi$ that $$-\int\Psi~d\mu_\Psi\asymp_{dis}
\sum_{n=0}^\infty\frac{e^{-nP(\phi)}\log(n+1)}{(n+1)^2}<\infty.
$$ for $P(\phi)\ge 0$.
Therefore, $\mu_\Psi$ is an equilibrium state for $(X, F)$.  We also have
\begin{equation}\label{eq:infind}
\int\tau~d\mu_\Psi\asymp_{dis} \sum_{n=1}^\infty \frac{ne^{-nP(\phi)}}{(n+1)^2} \qquad  \left\{\begin{array}{ll} < \infty & \hbox{ if } P(\phi)>0, \\
=\infty & \hbox{ if } P(\phi)=0. \end{array} \right.
\end{equation}
Therefore if $P(\phi)=0$, we cannot project this measure to the
original system.
\end{proof}

In the limit $K \to \infty$, the potential is $\phi(x)=b$ for $x\in (0,1]$
and $\phi(0)=0$.  It is easy to see that the same results above hold in this
case 
and that for $\phi_{-\log 2, \infty}$ the
equilibrium states are $\delta_0$ and the measure of maximal entropy.

We briefly summarise the conclusions of this example,
in order to clarify how it fits in with the results stated in this paper.
We fix $K\ge 2$.
Since $\phi$ is monotone, $\| \phi \|_{BV} < \infty$, but
$\sum_n \sup_{\c \in \P_n} \| \phi|_\c \|_{BV} = \sum_n V_n(\phi) = \infty$.
Therefore
Theorem~\ref{thm:Pac} does not apply for any value of $b$.
\begin{itemize}
\item For $b\le b_K$, we have $P(\phi)=0$ but \eqref{eq:bdd range} fails,
so Theorems~\ref{thm:HK} and \ref{thm:exist for bdd range} do not apply.
However, there exists a unique equilibrium state $\delta_0$ by \cite{Hnonuni}.
\item For $b_K<b\le -\log 2$, we have $P(\phi)>0$, but again
Theorems~\ref{thm:HK} and \ref{thm:exist for bdd range} do not apply.
However, there exists a unique equilibrium state by \cite{Hnonuni}.  Moreover, direct computations as in \eqref{eq:sn} and \eqref{eq:infind} allow us to use our inducing method and Case 2 of Proposition~\ref{prop:all_indu} to show that there exists a unique equilibrium state, which can be obtained from an inducing scheme.
\item For $-\log 2<b<0$, Theorem~\ref{thm:HK} applies
(since $\| \phi \|_{BV} < \infty$)  and Theorem~\ref{thm:exist for bdd range}
applies because
$\Psi$ is piecewise constant (so \eqref{SVI} holds and in fact,
$\Psi$ is weakly H\"older continuous,
see \eqref{eq:wH}).
Both theorems produce the unique equilibrium state.
\end{itemize}

In general, inducing schemes are used to improve the hyperbolicity of the map or properties of the potential (e.g.\ to obtain weak H\"older continuity).
For this system (or for the Manneville-Pomeau map of
Section~\ref{sec:M-P} below), there are inducing schemes that
produce the equilibrium state $\delta_0$. For instance, one can take
the original map itself, or the `unnatural' system consisting of the
left branch only, as induced system. But to obtain nice properties for map
or potential, one has to induce to a domain disjoint from $0$,
and none of these `natural' inducing schemes produces $\delta_0$ as
equilibrium state.

For $b\le b_K$ we have $\dm_F[\phi]=0$, since $P_G(\Phi-S\tau)=\infty$ for all $S<0$.
If $\phi$ had summable variations, then
the discriminant theorem \cite{Saphase} would imply
that $\phi$ is not `strong positive recurrent', but can be either positive recurrent or null recurrent.  The fact that we cannot project $\mu_\Psi$ appears to suggest that $\phi$ is null recurrent.  However, since the variations of $\phi$ are not summable we are not able to use this theory.  However, in the following lemma we make a direct computation to show that indeed $\phi$ is null recurrent when $b \le b_K$.

\begin{lemma}
Fix $K\ge 2$.  If $b\le b_K$ then $\phi$ is null recurrent.
\label{lem:phase}
\end{lemma}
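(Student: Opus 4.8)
The plan is to verify directly, for $b\le b_K$, the two halves of the definition: that $\phi$ is recurrent, but not positive recurrent. By Lemma~\ref{lem:HK}, $b\le b_K$ forces $P(\phi)=0$; since $V_n(\phi)\to 0$ we have $\beta_n(\phi)=o(n)$, and hence $P_G(\phi)=P(\phi)=0$ by Theorem~\ref{thm:FFY}. Thus $\lambda:=e^{P_G(\phi)}=1$, so recurrence of $\phi$ means $\sum_n Z_n(\phi)=\infty$, while positive recurrence would additionally require $\sum_n n\,Z_n^*(\phi)<\infty$.

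Recurrence is immediate from the fixed point $0$: since $\phi(0)=0=P_G(\phi)$ we have $\phi_n(0)=0$ and $\lambda^{-n}e^{\phi_n(0)}=1$ for every $n$, so $\sum_n\lambda^{-n}Z_n(\phi)\ge\sum_n 1=\infty$. (Equivalently, $\delta_0$ is a conservative conformal measure for $\phi$. For the borderline value $b=b_K$ one can also see recurrence through the inducing set $X=(\tfrac12,1]$, via the renewal identity $1+\sum_n Z_n(\phi,X)z^n=\bigl(1-\sum_n Z_n^*(\phi,X)z^n\bigr)^{-1}$ together with $\sum_n Z_n^*(\phi,X)=\sum_n e^{s_n}=1$.)

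For the failure of positive recurrence, I compute $Z_n^*(\phi,X)$ for the inducing set $X=(\tfrac12,1]$ of Section~\ref{sec:HK}. An $n$-periodic point of $f$ lying in $X$ whose orbit first returns to $X$ at time $n$ must lie in $X_n$, and since $F|_{X_n}=f^n|_{X_n}$ maps $X_n$ homeomorphically onto $X\supset X_n$, it has exactly one fixed point $x_n$ there; that $x_n$ is the only such periodic point. By the computation in the proof of Lemma~\ref{lem:HK}, $\phi_n(x_n)=\Phi|_{X_n}=s_n$, so $Z_n^*(\phi,X)=e^{s_n}$. Since $s_n=Kb+2\log\frac{K+1}{n+1}$ for $n>K$, we get $n\,Z_n^*(\phi,X)=e^{Kb}(K+1)^2\,\frac{n}{(n+1)^2}$, so $\sum_n n\,Z_n^*(\phi,X)$ diverges like the harmonic series. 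Hence $\phi$ is not positive recurrent, and with the previous paragraph this gives that $\phi$ is null recurrent.

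The only substantive ingredient is the identity $Z_n^*(\phi,X)=e^{s_n}$, which is essentially contained in Lemma~\ref{lem:HK}, so the argument is short. The point that calls for a little care is that $\phi$ does not have summable variations, so the standard machinery which makes recurrence independent of the reference cylinder is not directly applicable here; this is why recurrence is established using the fixed point $0$ (equivalently, a branch whose closure meets $0$, the support of the equilibrium state $\delta_0$), whereas the convenient set for the first-return computation is the inducing set $X$.
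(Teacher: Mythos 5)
Your computation for the failure of positive recurrence is the same as the paper's: you identify the unique first-return $n$-periodic point $x_n\in X_n$ of $F=f^n|_{X_n}$, use $\phi_n(x_n)=s_n$, and observe that $nZ_n^*(\phi,X)\asymp C/n$ sums to infinity. That half is fine.

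The recurrence half is where you diverge from the paper, and where there is a gap. The paper uses the left cylinder $\c_0$ and the explicit cycle $\cyc_n=\{p_n^1,\dots,p_n^n\}$; since $n-1$ of its points lie in $\c_0$ and each contributes $e^{s_n}$, one gets $Z_n(\phi,\c_0)\ge (n-1)e^{s_n}\sim C/n$, whence $\sum_n Z_n(\phi,\c_0)=\infty$. You instead appeal directly to the fixed point $0$. But $Z_n(\phi,\c)=\sum_{f^nx=x}e^{\phi_n(x)}\mathbf 1_\c(x)$ requires $x\in\c$ for some branch cylinder $\c\in\P_1$; with the (left-open) convention used throughout Section~6.1 (cf.\ $X_n=(\tfrac12+2^{-n-1},\tfrac12+2^{-n}]$, so $\c_0=(0,\tfrac12]$ and $\c_1=(\tfrac12,1]$) the fixed point $0$ does not lie in $\c_0\cup\c_1$, so it contributes nothing to any $Z_n(\phi,\c)$. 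The parenthetical ``a branch whose closure meets $0$'' does not repair this, because the indicator $\mathbf 1_{\c_0}(0)$ is still $0$. This is not mere pedantry: your own renewal-identity observation shows that for $b<b_K$ one has $\sum_n Z_n^*(\phi,\c_1)=\sum_n e^{s_n}<1$, whence $\sum_n Z_n(\phi,\c_1)<\infty$ and the potential is \emph{transient} with respect to the cylinder $\c_1$. Thus recurrence here genuinely depends on the reference cylinder, and one must produce a divergent lower bound for $Z_n(\phi,\c)$ using periodic points \emph{inside} the chosen $\c$. The paper's device---taking the $n-1$ points of $\cyc_n$ that lie in the interior of $\c_0$ and accumulate at $0$---is exactly the rigorous version of your heuristic ``recurrence is forced by the fixed point,'' and your proof should be replaced by (or reduced to) it. Your final paragraph correctly flags the cylinder-dependence issue as the point that ``calls for a little care,'' but the fix you offer does not actually resolve it.
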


\begin{proof}
Let $\c_0$ and $\c_1$ the left and right cylinders in $\P_1$.
Rather than considering all $n$-periodic cycles, we will restrict
ourselves to special ones, and show that these are sufficient to imply
recurrence.
For each $n$ there is a cycle $\cyc_n:=\{p_n^n, \ldots, p_n^1\}$ where $p_n^1\in  X_n$ as defined above, $f(p_n^k)=p_n^{k-1}$ for $n\ge k\ge 2$ and
$f(p_n^1)=p_n^n$  (in fact it is easy to compute $p_k^n=\frac{2^{n-k}}{2^n-1}$).
For $x\in \cyc_n$,  $\phi_n(x)=s_n$.
This cycle features $n-1$ times in the computation of
$Z_n(\phi,\c_0)$.  Hence,
$$
Z_n(\phi,\c_0)\ge ne^{s_n} \ge
(n-1-K) \left[\left(\frac K{n+1}\right)^2 \cdot e^{Kb}\right],
$$
so $\sum_n Z_n(\phi,\c_0) \ge \sum_n \frac{C}{n} = \infty$.
Recalling that $P_G(\phi) = 0$ for $b \le b_K$, this implies that
the potential is recurrent.

Notice that $p^1_n$ is the only point in $\cyc_n$ that belongs to
$\c_1$. So using this point and cylinder $\c_1$, the same computation
implies
that $\sum_n n Z^*_n(\phi, \c_1) = \infty$, so $\phi$ is null recurrent.
\end{proof}

\subsection{The Manneville-Pomeau Map}
\label{sec:M-P}

The Manneville-Pomeau map $f_{\alpha}(x) = x + x^{1+\alpha} \pmod 1$
with $\alpha \in (0,1)$ is well-known to have zero entropy equilibrium states
for the potential $-t \log |Df_{\alpha}|$ and appropriate values of $t$.
See \cite{Saphase} for an exposition of this theory and  the relevant references.
Supposing that $\alpha<\frac{\log 2}2$, for $p_1<p_2<1$ and $b<-\log 2$ we will use the potential
$$
\phi(x) = \phi_{\alpha, p_1,p_2,b}(x):= \left\{\begin{array}{ll}
-2\alpha x^{\alpha} &\hbox{ if } x\in [0,p_1],\\[1mm]
\left(\frac{b+2\alpha p_1^\alpha}{p_2-p_1}\right)(x-p_1)-2\alpha p_1^\alpha &\hbox{ if } x\in (p_1,p_2],\\[1mm]
b &\hbox{ if } x\in (p_2,1], \end{array}\right.
$$
as an example to show that \eqref{eq:bdd range}
is sharp.
(Note that $\phi$ has the same H\"older exponent as $-\log |Df_{\alpha}|$.)
Since $\htop(f) = \log 2$,
condition  \eqref{eq:bdd range} is violated whenever $b \le -\log 2$.
It turns out that as soon as this occurs, we can choose $\alpha, p_1, p_2$ so that no equilibrium state can be achieved
from a `natural' inducing scheme on an interval bounded away from the neutral fixed point $0$.
Thus  \eqref{eq:bdd range}
is sharp, even when the potential is H\"older.

The conclusion of Proposition~\ref{prop:MP} proved below is that
H\"older regularity  of the potential is not sufficient to dispense with the condition \eqref{eq:bdd range}.

\begin{proof}[Proof of Proposition~\ref{prop:MP}]
We will make a suitable choice for $p_1, p_2$ later in the proof.  Let $y_0=1$ and define $y_n\in (0,y_{n-1})$ for $n\ge 1$ such that
$f_\alpha(y_{n})=y_{n-1}$.
From the recursive relation $y_n = y_{n+1}(1+y_{n+1}^\alpha)$ we derive
(cf. \cite{Bruyn})
\[
\frac1{y_n} = \frac1{y_{n+1}} ( 1+y_{n+1}^\alpha)^{-1}
= \frac1{y_{n+1}} \left( 1-y_{n+1}^\alpha + y_{n+1}^{2\alpha} +Err(y_{n+1}^{3\alpha}) \right),
\]
where $|Err(y_{n+1}^{3\alpha})|=O(y_{n+1}^{3\alpha})$.  Using $u_n = y_n^{-\alpha}$ this becomes
\begin{eqnarray*}
u_n &=& u_{n+1}\left(1-\frac1{u_{n+1}} +
\frac1{u_{n+1}^2}+Err\left(\frac1{u_{n+1}^3}\right)\right)^\alpha \\
&=&  u_{n+1} \left(1-\frac{\alpha}{u_{n+1}} +  \frac{\alpha(\alpha+1)}{2u_{n+1}^2} + Err\left(\frac1{u_{n+1}^3}\right) \right),
\end{eqnarray*}
where $\left|Err\left(\frac1{u_{n+1}^3}\right)\right|=
O\left(\frac1{u_{n+1}^3}\right)$.
Therefore $u_{n+1}-u_n = \alpha + \frac{ \alpha(\alpha+1) }{2}
\frac{1}{u_{n+1}} + Err\left(u_{n+1}^{-2}\right)$, and using telescoping series this leads to
\[
u_n = \alpha n + \frac{ \alpha(\alpha+1) }{2} \log n + Err\left(\frac1n\right) \
\]
Transforming back to the original coordinate $y_n$, we find
\[
y_n = \left(\frac{1}{u_n}\right)^{1/\alpha} =
\left(\frac1\alpha\right)^{1/\alpha} \left(\frac1n\right)^{1/\alpha} \left( 1 + \frac{\alpha(\alpha+1)}{2n} \log n + Err\left(\frac1{n^2}\right) \ \right)^{-1/\alpha}.
\]
Thus
\begin{eqnarray*}
\phi(y_n) = -2\alpha y_n^{\alpha} & = &
\frac{-2}{n}\left(  1 + \frac{\alpha(\alpha+1)}{2n} \log n +
Err(n^{-2}) \ \right)^{-1} \\
&=&
\frac{-2}{n} + \frac{\alpha(\alpha+1)}{n^2} \log n +
Err(n^{-3})
\end{eqnarray*}
for $y_n < p_1$ where $|Err(n^{-3})|=O(n^{-3})$.

For all $n$ sufficiently large, the variations w.r.t. the branch partition satisfy $V_n(\phi) \ge \frac{2}{n}$
(obtained on the $n$-cylinder
set $[0, y_n]$), so $\phi$ does not have summable variations.
However, since $\phi$ is monotone, $\|\phi\|_{BV} < \infty$.

For any $b<-\log 2$ we will choose $K>N$ and $p_1=K$ and $p_2=y_N$ depending on $\alpha$ and $b$.
$n\ge N$ implies (replacing the convergent sum of the last given and
higher order terms by a single constant $B = B_N$ which is bounded in $N$),
$$
s_n := \sup_{x \in (y_{n+1}, y_n]} \ \sum_{k=0}^{n-1} \phi(f^k(x)) = Nb  -2\sum_{k=N}^{n-1} \frac1k +  B.
$$
Clearly choosing $N$ large enough we can make this error
as small as we like.  By the above, we have
\begin{align*}
\sum_ne^{s_n} & \le \sum_{k=1}^Ne^{kb}+e^{Nb+B}\sum_{N+1}^\infty \left(\frac Nn\right)^2
\le  e^b\left(\frac{1-e^{Nb}}{1-e^b}\right)+e^{Nb+B}(N+1).
\end{align*}
Hence, we can choose $N$ so large that $\sum_n e^{s_n}\le 1$ and hence by Figure~\ref{fig1} we have $P(\phi)=0$.
(Likewise we can fix suitable $\alpha, N, K$ and find a critical value $b_{\alpha, N, K}$ where below this value, $\sum_n e^{s_n}\le 1$ and above it, $\sum_n e^{s_n}> 1$.)

We define $F$ to be the first return map to $X:=(y_1,1]$,
so if $x_i \in (y_1,1]$ is such that $f_\alpha(x_i) = y_i$,
then $X_i = (x_{i+1}, x_i]$ and $\tau_i = i$.
A straightforward computation shows that $\Phi|_{X_n}$ is monotone and
there is $C \ge 1$ \st for large $n$,
$-2\log n - \frac{C}{n} \le \Phi|_{X_n} \le -2 \log n + \frac{C}{n}$;
in fact $\Phi$ is weakly H\"older.
As in Lemma~\ref{lem:HK}, we can show that $Z_0(\Phi) < \infty$, so
$P_G(\Phi)<\infty$ and there is a unique equilibrium state
$\mu_{\Phi}$ for $(X,F,\Phi)$
which also satisfies the Gibbs property.
However, as in \eqref{eq:infind}, the inducing time has $\int \tau \ d\mu_{\Phi} = \infty$, as required.
\end{proof}

\section{Recurrence of Potentials}\label{sec:recurrence}

Although not crucial for the main results of this paper,
the question whether the potential is recurrent (see \eqref{eq:recurrence})
is of independent interest.
In this section we give sufficient conditions for $\phi$ to be
recurrent, and for the topological pressure and the Gurevich pressure
to coincide.

Recall that Theorems~\ref{thm:HK} and \ref{thm:HKsum} gave conditions
under which transfer operator $\L_\phi$ is quasi-compact.
Let us first lay out an argument why this implies that
$\phi$ is recurrent.
Recall that quasi-compactness means that the essential spectrum $\sigma_{ess}$ is
strictly less than the leading eigenvalue $\lambda = \exp(P(\phi))$, and
there are only finitely many eigenvalues outside $\{|z| \le \sigma_{ess} \}$,
each with finite multiplicity.
A result due to
Baladi and Keller \cite{BalKel} says that this spectral gap
implies that the dynamical $\zeta$-function
\[
\zeta(z) = \exp\left(
\sum_{n=1}^{\infty} \frac{z^n}{n} \sum_{f^n(x)=x} e^{\phi_n(x)} \right)
\]
is meromorphic on $\{ |z| \le \lambda^{-1} \}$, with a pole at
$\lambda^{-1}$ whose multiplicity is the same as the multiplicity of
the eigenvalue $\lambda$ of $\L_\phi$.
The argument why this implies recurrence of the potential is
somewhat implicit in  \cite{BalKel}.
Namely, there is a function $g$ which is analytic on
$\{ |z| < \lambda^{-1} \} \cap \{ |z-\lambda^{-1}| < \eps\}$
such that $g(\lambda^{-1}) \neq 0$ and $\zeta'(z)/\zeta(z) = g(z)/(z-\lambda^{-1})$ on this region.
Hence $\lim_{z \to \lambda^{-1}} \zeta'(z)/\zeta(z) = \infty$.
Direct computation gives
\[
\frac{\zeta'(z)}{\zeta(z)} = \frac1{z} \sum_{n=1}^{\infty} z^n
\sum_{f^n(x)=x} e^{\phi_n(x)}
= \frac1z \sum_{n=1}^{\infty} z^n \ Z_n(\phi),
\]
so recurrence follows.

\begin{proposition}
Let $f\in \H$ and $\phi$ be a potential \st $\sup \phi - \inf \phi < \htop(f)$ .
If
\begin{equation*}\label{eq:cond_thm}
V_n(\phi) \to 0 \quad \mbox{ and } \quad \sum_n e^{-\beta_n} = \infty,
\end{equation*}
then $\phi$ is recurrent. (Here $\beta_n := \beta_n(\phi)$ is defined as in \eqref{eq:beta}.)
\label{prop:rec}
\end{proposition}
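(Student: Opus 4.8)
The plan is to prove recurrence directly, by establishing the pointwise lower bound $Z_n(\phi,\c)\ge c\,e^{-\beta_n}\lambda^n$ for all large $n$ (where $\lambda=\exp P_G(\phi)$); summing against $\lambda^{-n}$ then gives $\sum_n\lambda^{-n}Z_n(\phi,\c)\ge c\sum_n e^{-\beta_n}=\infty$, which is exactly the required recurrence.

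First I record what $V_n(\phi)\to0$ buys. If $x,y$ lie in a common $n$-cylinder then $|\phi_n(x)-\phi_n(y)|\le\sum_{k=1}^{n}V_k(\phi)$, so an elementary averaging argument gives $\beta_n(\phi)=o(n)$, and likewise $\beta_n(\hat\phi)=o(n)$, with $\hat\phi$ continuous in the symbolic metric on $(\hat I,\hat f)$. By Corollary~\ref{cor:var prin} together with the remark after \eqref{eq:lmuhmu} (which is where $\sup\phi-\inf\phi<\htop(f)$ enters) we have $P_G(\hat\phi)=P_+(\phi)=P(\phi)$, and a standard comparison of periodic orbits of $f$ and $\hat f$ gives $P_G(\phi)=P_G(\hat\phi)=:P$; set $\lambda=e^P$. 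By Lemma~\ref{lem:small cyl} we may compute $P_G(\phi)$ and the recurrence series using any cylinder, so I fix a branch $\c\in\P_1$ --- conveniently, the one lying under a small $\hat X$ as in Proposition~\ref{prop:exp_tail}.

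For the lower bound, call an $n$-cylinder $\c_n\subset\c$ \emph{full} if $f^n(\c_n)\supset\c$. A full $\c_n$ contains an $n$-periodic point $x$ of $f$, and since $x\in\c_n$ we have $\phi_n(x)\ge\sup_{\c_n}\phi_n-\beta_n$; hence $Z_n(\phi,\c)\ge e^{-\beta_n}W_n$, where $W_n:=\sum_{\c_n\subset\c\ \text{full}}\sup_{\c_n}e^{\phi_n}$, and everything reduces to showing $W_n\ge c\lambda^n$ with \emph{no further exponential loss}. This is the step that uses the bounded-range hypothesis: an $n$-cylinder in $\c$ that fails to be full is one whose orbit in the Hofbauer tower sits high above the base, and the estimates from the proof of Proposition~\ref{prop:exp_tail} show that the $\lambda^{-n}$-weight of such cylinders is $O(e^{R(\sup\phi-\inf\phi-\htop(f))})$ --- summably small in the truncation level $R$ --- while the full cylinders still carry the full exponential rate $\lambda^n$. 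I would make this precise by reusing the left-eigenvector bookkeeping of that proof: with $v$ the leading left eigenvector of the rome matrix $A_{\smrome}$, normalised to a probability vector, one has $\sum_{\lev(\hat Q)>R}v_{\hat Q}\le\frac12$ for $R$ large, hence $\sum_{\lev(\hat Q)\le R}v_{\hat Q}>\frac12$, and using topological mixing to pass from the base vertex to every vertex one obtains $\rho_{\smrome}^{-n}\sum_{\lev(\hat Q)\le R}(A_{\smrome}^{n})_{D_0,\hat Q}\ge c>0$ for all large $n$; letting the order $k$ of the cylinders in $A_{\smrome}$ tend to infinity (so $\rho_{\smrome}\to\lambda$ and the distortion errors disappear) yields $\lambda^{-n}W_n\ge c'>0$ for all large $n$. (One may also think of $v$ as a $\phi$-conformal measure at scale $\lambda$, so that $W_n\ge\lambda^n\nu(U_n)$ with $U_n$ the union of the full $n$-cylinders in $\c$, and the point is that the conformal measure concentrates near the base of the tower; since $\sum_n e^{-\beta_n}=\infty$ forces $\beta_n\le\log n+o(\log n)$, even showing $\nu(U_n)\ge c'$ along a set of $n$ of positive lower density would suffice.)

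The main obstacle is precisely this estimate $W_n\ge c\lambda^n$: one must check that discarding the top of the tower by means of the bounded-range condition costs \emph{only} the single factor $e^{-\beta_n}$ already produced when passing to a periodic point, with no additional $e^{-\epsilon n}$ --- and this is exactly why the hypothesis takes the form $\sum_n e^{-\beta_n}=\infty$. The rest is bookkeeping: with $Z_n(\phi,\c)\ge c\,e^{-\beta_n}\lambda^n$ in hand, $\sum_n\lambda^{-n}Z_n(\phi,\c)\ge c\sum_n e^{-\beta_n}=\infty$, so $\phi$ is recurrent.
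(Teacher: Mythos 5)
Your overall architecture coincides with the paper's: both reduce recurrence to the pointwise lower bound $Z_n(\phi) \ge \eta\, e^{-\beta_n} e^{nP_{top}(\phi)}$ (this is exactly Lemma~\ref{lem:Zn}), obtained by locating periodic points inside cylinders whose Hofbauer-tower itinerary ends at low level, with $\sup\phi-\inf\phi<\htop(f)$ used only to show that cylinders staying above level $R$ carry a negligible fraction of the weight; the factor $e^{-\beta_n}$ arises in both arguments from replacing $\sup_{\c_n}\phi_n$ by the value at the periodic point, and the final summation against $\lambda^{-n}$ is identical.

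The gap is in your key estimate $W_n\ge c\lambda^n$. You propose to extract it from the rome/left-eigenvector bookkeeping of Proposition~\ref{prop:exp_tail}, ``letting $k\to\infty$ so that $\rho_{\smrome}\to\lambda$ and the distortion errors disappear''. But for each fixed cylinder order $k$ the weighted transition matrix on $\hat\P_k$ reproduces the true weights $e^{\phi_n(x)}$ only up to a multiplicative error of size $e^{\pm nV_k(\hat\phi)}$, so $\rho_{\smrome}^n$ and $\lambda^n$ can differ by $e^{nV_k}$ --- an exponential loss for every fixed $k$. Letting $k\to\infty$ controls the growth \emph{rate} but does not yield a single constant $c$ with $W_n\ge c\lambda^n$ for all $n$: you would need $k=k(n)\to\infty$ together with uniformity in $k$ of the eigenvector ratios and mixing constants, which is precisely the delicate claim \eqref{eq:claim} inside Proposition~\ref{prop:exp_tail} and is not supplied here. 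The paper sidesteps the Markov approximation altogether: it works with $Z^{top}_m(\phi)=\sum_{J\in\P_m}e^{\phi_m(J)}$, whose submultiplicativity gives the \emph{exact} bound $Z^{top}_m(\phi)\ge e^{mP_{top}(\phi)}$ with constant $1$ and no distortion error, and then splits $\P_m$ into type~1 cylinders (pre-level $\le R$, hence containing a periodic point of period $n=m+R'$ after extending the path by a fixed $R'$) and type~2 cylinders, showing by a direct count --- at most two $R$-paths escape $\hat I_R$ from each domain, versus at least $C_0e^{k(\htop(f)-\gamma)}$ paths staying inside --- that $Z_m^{\mbox{\tiny type 2}}(\phi)\le \frac1{C_0}e^{R'\htop(f)}Z_m^{\mbox{\tiny type 1}}(\phi)$. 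Note also the index shift $n=m+R'$: your ``full'' $n$-cylinders are not the low-pre-level ones, which only produce periodic points after $R'$ further iterates; this is harmless but must be tracked. Replacing your eigenvector step by this submultiplicativity-plus-counting argument closes the proof.
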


Clearly $\beta_n \le \sum_{k=1}^n V_k(\phi)$,
and $V_n(\phi) \to 0$ implies $\beta_n = o(n)$.
The condition $\sum_n e^{-\beta_n} = \infty$ is stronger: it implies
that $\beta_n = o(\log n)$ and is implied by $V_n(\phi) = O(n^{-(1+\eps)})$.

It is well known that the Variational Principle holds for the potential
$\phi = 0$; in fact
\[
\htop(f) = P(0) = P_G(0) = P_{top}(0) = \lim_n \frac1n \log
\laps(f^n),
\]
where $\laps(f^n) := \# \P_n$ is  the \emph{lap number},
\ie the number of maximal intervals on which
$f^n$ is monotone, see \cite{MSz}.
In fact, the lap number is submultiplicative: $\laps(f^{n+m}) \le
\laps(f^n) \laps(f^m)$.
Therefore $\htop(f) = \inf_{n} \frac1n \log \laps(f^n)$ and
\begin{eqnarray}\label{eq:lapsfn}
e^{\htop(f) n} \le  \laps(f^n) \le e^{n(\htop(f)+\eps_n)},
\end{eqnarray}
where $\eps_n\to 0$ as $n \to \infty$.  We will extend this idea to ergodic averages of more general potentials in
Lemma~\ref{lem:Zn}.
For $J \in \P_m$, let $\phi_m(J) = \sup \{ \phi_m(x) : x \in J\}$ and
\[
Z^{top}_m(\phi) := \sum_{J \in \P_m} e^{\phi_m(J)}.
\]
For the remainder of this section we assume that $(I,f)$ is
topologically mixing, \ie for each $m$, $(I, f^m)$ is
topologically transitive.
In order to prove recurrence of $\phi$, we need the following lemma.

\begin{lemma}\label{lem:Zn}
Let $\phi$ be a potential satisfying \eqref{eq:bdd range} and with  $\beta_n(\phi) = o(n)$.
Then there exists $\eta > 0$ \st
$Z_n(\phi) \ge \eta e^{-\beta_n} e^{P_{top}(\phi)n}$ for all $n$,
and $P_{top}(\phi) = P_G(\phi)$.
\end{lemma}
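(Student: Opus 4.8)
The plan is to prove the lower bound $Z_n(\phi) \ge \eta\, e^{-\beta_n} e^{P_{top}(\phi)n}$ first, and then to deduce the equality $P_{top}(\phi) = P_G(\phi)$ from it together with a matching upper bound. For the lower bound, I would start from the topological partition sum $Z^{top}_m(\phi) = \sum_{J \in \P_m} e^{\phi_m(J)}$, which by the definition of $P_{top}(\phi)$ in Section~\ref{subsec:pressure} has exponential growth rate $P_{top}(\phi)$; more precisely, since the partition $\P_m$ refines under $f$ and $\phi$ satisfies the bounded range condition \eqref{eq:bdd range}, one gets a clean two-sided control of the form $e^{P_{top}(\phi)m} \le Z^{top}_m(\phi) \cdot (\text{subexponential})$, analogous to \eqref{eq:lapsfn} but with the potential weights inserted. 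The point of \eqref{eq:bdd range} here is exactly, as in the doubling-map/Hofbauer-tower arguments earlier in the paper, that the ``loss'' incurred by passing from cylinders to periodic orbits is governed by $\sup\phi - \inf\phi$ against $\htop(f)$, so that the contribution of cylinders not containing a suitable periodic point is negligible.

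The key step is then to pass from cylinders to periodic points. Fix a cylinder $\c \in \P_1$. By topological mixing there is a uniform $k_0$ such that for every $J \in \P_m$ one can find a $j \le k_0$ with $f^{m+j}(J) \supset \c$ and, after a further bounded number of steps bounded by some $k_1$, close up: there is an $(m+j+k_1)$-periodic point $x \in J \cap \c$. (This is the standard ``horseshoe/closing'' argument for topologically mixing interval maps; one can also invoke the specification property of \cite{Blokh,Buspeci} cited in the excerpt.) For such $x$ one has $\phi_{m+j+k_1}(x) \ge \phi_m(J) - \beta_m - (k_0+k_1)\,|{\inf\phi}|$ --- here the error $\beta_m = \beta_m(\phi)$ enters because $x$ lies in the same $m$-cylinder $J$ as the point attaining $\phi_m(J)$, and the bounded term absorbs the $O(1)$ extra iterates. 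Summing over $J \in \P_m$, each periodic point counted at most boundedly often, and redistributing the total periodic-point count among the finitely many lengths $m, m+1, \dots, m+k_0+k_1$, yields
\[
\sum_{r=m}^{m+k_0+k_1} Z_r(\phi,\c) \ \ge\ \text{const}\cdot e^{-\beta_m}\, Z^{top}_m(\phi)\ \ge\ \text{const}\cdot e^{-\beta_m}\, e^{P_{top}(\phi)m}.
\]
Since $\beta_m$ is nondecreasing and $\beta_{m+k_0+k_1} - \beta_m \le \sum_{j=1}^{k_0+k_1} V_{m+j}(\phi) \to 0$ (as $V_n(\phi) \to 0$), one of the finitely many terms on the left must be at least a fixed fraction of the right-hand side, and shifting the index by the bounded amount $k_0+k_1$ only changes $e^{-\beta}$ and $e^{P_{top}(\phi)\cdot}$ by bounded factors; this gives $Z_n(\phi) \ge \eta\, e^{-\beta_n} e^{P_{top}(\phi)n}$ for all large $n$, and shrinking $\eta$ handles small $n$. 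Independence of $\c$ is guaranteed by Lemma~\ref{lem:small cyl} since $\beta_n(\phi)=o(n)$.

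For the equality $P_{top}(\phi) = P_G(\phi)$: the lower bound just proved gives, after taking $\frac1n\log(\cdot)$ and using $\beta_n = o(n)$, that $P_G(\phi) \ge P_{top}(\phi)$. For the reverse inequality, each $n$-periodic point $x$ with $f^k(x)\notin\c$ for $0<k<n$ lies in a distinct $n$-cylinder $J$, and $e^{\phi_n(x)} \le e^{\beta_n} e^{\phi_n(J)}$, so $Z_n(\phi,\c) \le e^{\beta_n} Z^{top}_n(\phi)$; taking exponential growth rates and again using $\beta_n=o(n)$ gives $P_G(\phi) \le P_{top}(\phi)$. Hence equality.

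\textbf{Main obstacle.} The delicate point is the closing/mixing step: producing, uniformly over all $J \in \P_m$ and with a uniformly bounded number of extra iterates, a genuine periodic point inside $J$ that also visits the reference cylinder $\c$, while keeping the Birkhoff sum $\phi_{m+O(1)}(x)$ within $\beta_m + O(1)$ of $\phi_m(J)$ and controlling the multiplicity with which each periodic point is counted. This is where topological mixing (rather than mere transitivity) and the bounded-range hypothesis \eqref{eq:bdd range} are both genuinely used, and it is essentially the interval-map specification argument of \cite{Blokh,Buspeci}; the bookkeeping of the bounded index shift and the monotonicity of $\beta_n$ is routine once that step is in place.
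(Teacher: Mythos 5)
Your proposal has the right high-level shape (pass from the topological partition sum $Z^{top}_m(\phi)$ to periodic-orbit sums by closing up each cylinder, paying an error controlled by $\beta_m$), but the closing step is where it breaks, and the gap is precisely the one you flag as the ``main obstacle'' and then wave away.

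You assert that topological mixing provides a uniform $k_0$ such that every $m$-cylinder $J\in\P_m$ has $f^{m+j}(J)\supset\c$ for some $j\le k_0$, and that this is ``essentially the interval-map specification argument of \cite{Blokh,Buspeci}''. This is not so: the specification property of Blokh and Buzzi does not give a covering time that is uniform over all cylinders, and the paper explicitly warns (just after Theorem~\ref{thm:HKsum}) that the uniform version is \emph{not} automatic and fails for maps with persistently recurrent critical points (e.g.\ the Fibonacci map). Concretely, a cylinder $J\in\P_m$ of large pre-level $R$ in the Hofbauer tower has $f^m(J)$ a very short interval, and the number of additional iterates needed for its image to cover $\c$ grows with $R$ (hence with $m$), not with any fixed $k_0$. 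So your estimate $\phi_{m+j+k_1}(x)\ge\phi_m(J)-\beta_m-(k_0+k_1)|\inf\phi|$ does not hold with bounded $k_0+k_1$, and the $O(1)$ index-shift bookkeeping collapses. Notice also that in your argument the hypothesis \eqref{eq:bdd range} is never used in an essential way for the lower bound, which is a symptom of the gap: if the uniform closing property held, the lower bound $Z_n(\phi)\ge\eta e^{-\beta_n}e^{P_{top}(\phi)n}$ would follow without any bounded-range restriction, but the lemma genuinely needs it.

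The paper's proof handles exactly this obstruction by working in the Hofbauer tower. It splits the $m$-cylinders into \emph{type 1} (pre-level $\le R$, which admit an $n$-periodic point after a \emph{fixed} $R'$ extra iterates, giving your estimate on this subclass) and \emph{type 2} (pre-level $>R$). The heart of the proof, and the place where \eqref{eq:bdd range} enters, is to show that the total weight of type-2 cylinders is dominated by that of the type-1 ones: from any tower domain there are at most two $R$-paths that stay outside $\hat I_R$, so there are at most $2^{(m-m')/R}$ type-2 cylinders branching off at depth $m'$, while there are at least $C_0 e^{(m-m'-R')(\htop(f)-\gamma)}$ type-1 siblings; choosing $R$ large so that $\sup\phi-\inf\phi<\htop(f)-\gamma-\frac{\log 2}{R}$ makes the weight ratio of type-2 to type-1 uniformly bounded. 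Your proof as written has no mechanism to control the contribution of these deep cylinders, and would need exactly this kind of type-splitting (or another way to invoke \eqref{eq:bdd range}) to be repaired. The reverse inequality $P_G(\phi)\le P_{top}(\phi)$ and the deduction of $P_{top}=P_G$ from the lower bound in your last paragraph are fine.
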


\begin{proof}
Since $f$ is topologically transitive, there is a collection of intervals
permuted cyclically by $f$,
such that for any interval $J$, there is $n$ such that $f^n(J)$ contains
a component of this cycle. For simplicity, let us assume that
this collection is just a single interval $I$.

Since every $m$-cylinder set can contain at most one $m$-periodic point,
$Z_m(\phi) \le Z_m^{top}(\phi)$ for all $m$.
Furthermore, $Z^{top}_m(\phi)$ is submultiplicative, cf.
\eqref{eq:lapsfn}, so
\begin{equation*}\label{eq:Psharp}
P_{top}(\phi) := \lim_{m\to \infty} \frac1m \log Z^{top}_m(\phi)
= \inf_m \frac1m \log Z^{top}_m(\phi) < \infty.
\end{equation*}
Therefore $P_G(\phi) \le P_{top}(\phi) < \infty$.

Recall that every $J \in \P_m$ corresponds to a unique $m$-path $D_0 \to D_1 \to
\dots \to D_m$ in the Hofbauer tower $(\hat I, \hat f)$ leading
from the base $D_0$ of the tower to some terminal domain
$D_m$. The level of $D_m$ was defined as the length of the shortest path from the base to $D_m$.
We say that the \emph{pre-level} of $J$ is $\plev(J) = R$.

The topological entropy $\htop(f)$ is the exponential growth rate of
the number of $n$-paths $D_0 \to \dots \to D_n$ in the Hofbauer
tower, and the limit of the exponential growth rates of the number
of $n$-paths within $\hat I_R$ as $R \to \infty$, see \cite{Hpwise}
for the unimodal and \cite[Sections 9.3-9.4]{BB} for the general
case. Therefore, by taking $R$ sufficiently large, we can find $\gamma >
0$ and $C_0 \in (0,1)$ \st the number of $k$-paths
\begin{equation}\label{eq:R1}
\#\{ D_0 \to D_1 \to \dots \to D_k \ : \ \lev(D_k) \le R, 1 \le j
\le k \} \ge C_0 e^{ k (\htop(f)-\gamma ) }
\end{equation}
for all $k \ge 1$, and
\begin{equation}\label{eq:R2}
\sup \phi - \inf \phi < \htop(f) - \gamma - \frac{\log 2}{R}.
\end{equation}
Since $(I,f)$ is topologically transitive
(and using our simplifying assumption), there exists $R'$
depending on $R$, \st for each $D \in \D$ with $\lev(D) \le R$,
$f^{R'}(D) \supset I$. This implies that every $J \in \P_m$ with $\plev(J) \le R$.
contains a periodic point of period $n := m+R'$.

The idea is now for an arbitrary $J \in \P_m$ to extend the corresponding
path by $R'$ arrows to find an $n$-periodic point $p \in J$.
If $\plev(J) \le R$, then by the choice of $R'$, this is indeed possible.
We call such cylinder sets $J$ {\em type 1}, and we can thus compare
$Z_m^{\mbox{\tiny type 1}}(\phi)$ to $Z_n(\phi)$
as:
\begin{eqnarray}\label{eq:type1}
Z_m^{\mbox{\tiny type 1}}(\phi)
&=& \sum_{J \in \P_m, \mbox{\tiny type 1}} e^{\phi_m(J)}  \nonumber \\
&\le& \sum_{\stackrel{p = f^n(p) \in J}{J \in \P_m\ \mbox{\tiny is type 1}} }
e^{\beta_m} e^{-R' \inf \phi} e^{\phi_n(p)}
\le e^{\beta_m} e^{-R' \inf \phi} Z_n(\phi).
\end{eqnarray}
If $\plev(J) > R$, then the existence of an $n$-periodic point in $J$
cannot be guaranteed. We call such cylinder sets $J$ {\em type 2}.
Given such a type 2 cylinder set $J$, there is
a maximal $m'<m$ such that $\plev(J') = R$ for the $m'$-cylinder
$J'$ containing $J$.
As we mentioned before, from any domain in the Hofbauer tower, there are
at most two $R$-paths that are outside $\hat I_R$.
Using this property repeatedly, we find that there are at
most $2^{(m-m')/R} = e^{(m-m')\frac{\log 2}{R}}$ starting at $D_{m'}$
but otherwise outside $\hat I_R$.
From $D_{m'}$, there is at least one $R'$-path leading back to some $D \in \hat I_R$, and using \eqref{eq:R2} and \eqref{eq:R1} we derive that there are
at
least $C_0 e^{(m-m'-R')(\htop(f)-\gamma)}$ `type 1' $m-m'$-paths from $D_{m'}$.
From this we conclude that the type 1 cylinders
``sufficiently'' outnumber the type 2 cylinders,
and we can bound the contributions of type 2 cylinders in $J'$
by the contribution of type 1 cylinders in $J'$ as follows:
\begin{align*}
\sum_{J \subset J', \mbox{\tiny type 2}} e^{\phi_m(J)}
&\le e^{(m-m')(\sup \phi + \frac{\log 2}{R})} \\
&\quad \times  \frac1{C_0} \
e^{-(m-m'-R')(\htop(f) - \gamma)}
e^{-(m-m') \inf \phi}
\sum_{J \subset J', \mbox{\tiny type 1}} e^{\phi_m(J)}\\
&\le \frac1{C_0} \
e^{(m-m')(\sup \phi - \inf \phi - \htop(f) + \gamma + \frac{\log 2}{R})}
e^{R' (\htop(f) - \gamma)}
\sum_{J \subset J', \mbox{\tiny type 1}} e^{\phi_m(J)}\\
&\le  \frac1{C_0} \ e^{R' \htop(f)}
\sum_{J \subset J', \mbox{\tiny type 1}} e^{\phi_m(J)}.
\end{align*}
Summing over all $m'$ and $J' \in \P_{m'}$, we get
\[
Z_m^{\mbox{\tiny type 2}}(\phi) \le \frac1{C_0} \ e^{R' \htop(f)}
Z_m^{\mbox{\tiny type 1}}(\phi).
\]
Now we combine this with \eqref{eq:type1} and the
fact that $\{ Z_n^{top}(\phi) \}_n$ is submultiplicative to obtain
\begin{align*}
e^{n P_{top}(\phi)} &\le Z_n^{top}(\phi) \le Z_{R'}^{top}(\phi) \cdot
Z_m^{top}(\phi)\le Z_{R'}^{top}(\phi)
\left[Z_m^{\mbox{\tiny type 1}}(\phi) + Z_m^{\mbox{\tiny type 2}}(\phi)\right] \\
&\le Z_{R'}^{top}(\phi) \left[1+ \frac1{C_0} e^{R' \htop(f)}\right]
Z_m^{\mbox{\tiny type 1}}(\phi)\\
&\le  Z_{R'}^{top}(\phi)
\left[1+ \frac1{C_0} e^{R' \htop(f)}\right] e^{-R' \inf \phi} e^{\beta_m} Z_n(\phi) \\
&\le  Z_{R'}^{top}(\phi)
\left(\frac2{C_0}\right) e^{R' (\htop(f)- \inf \phi)} e^{\beta_m - \beta_n}
e^{\beta_n} Z_n(\phi) = \frac1{\eta} e^{\beta_n} Z_n(\phi)
\end{align*}
for
$\eta = \left(\frac{C_0}{2 Z_{R'}^{top}(\phi) }\right)  e^{-R'(\htop(f)- \inf \phi)}
e^{\beta_n - \beta_m}$.
Since $n-m = R'$, we can assume that $e^{\beta_m - \beta_n}$
is bounded independently of $m$, so $\eta > 0$.
This proves the first statement.
In fact, since $\beta_n = o(n)$, we also find $P_{top}(\phi) = P_G(\phi)$.
\end{proof}

\begin{corollary}\label{cor:recurrence}
If $\sup \phi - \inf \phi < \htop(f)$ and
$\sum_n e^{-\beta_n} = \infty$, then the potential $\phi$
is recurrent.
\end{corollary}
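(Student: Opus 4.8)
The plan is to read this off directly from Lemma~\ref{lem:Zn}, so the whole argument is short once a small preliminary reduction is made. The one hypothesis of Lemma~\ref{lem:Zn} not literally among the assumptions of the corollary is $\beta_n(\phi) = o(n)$, so the first step is to note that $\sum_n e^{-\beta_n} = \infty$ already forces this. Since $\phi_{n+m} = \phi_n + \phi_m\circ f^n$, the sequence $n\mapsto\beta_n(\phi)$ is subadditive; hence $\beta_n/n$ decreases to $\bar\beta := \inf_m \beta_m/m \ge 0$ and $\beta_n \ge \bar\beta\, n$ for all $n$. If $\bar\beta > 0$ we would have $e^{-\beta_n}\le e^{-\bar\beta n}$, a summable sequence, contradicting the hypothesis; thus $\bar\beta = 0$ and $\beta_n = o(n)$.

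Now Lemma~\ref{lem:Zn} applies (we have \eqref{eq:bdd range} by assumption and $\beta_n = o(n)$ by the above), giving a constant $\eta > 0$ with $Z_n(\phi) \ge \eta\, e^{-\beta_n} e^{P_{top}(\phi)\, n}$ for every $n$, together with the identity $P_{top}(\phi) = P_G(\phi)$. Writing $\lambda = \exp P_G(\phi)$ as in the recurrence condition \eqref{eq:recurrence}, we obtain
\[
\sum_n \lambda^{-n} Z_n(\phi) \ \ge\ \eta \sum_n e^{-\beta_n} e^{n(P_{top}(\phi) - P_G(\phi))} \ =\ \eta \sum_n e^{-\beta_n} \ =\ \infty,
\]
which is exactly the statement that $\phi$ is recurrent. (The divergence of this series does not depend on the reference cylinder, so no further care is needed there.)

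There is no genuine obstacle at this level: all of the analytic substance --- the rome/Hofbauer-tower estimate that produces the lower bound on $Z_n(\phi)$ and the coincidence $P_{top}(\phi) = P_G(\phi)$ --- is already contained in Lemma~\ref{lem:Zn}. The only point worth double-checking is the elementary implication $\sum_n e^{-\beta_n} = \infty \Rightarrow \beta_n = o(n)$ used to verify the hypotheses, which follows from subadditivity as above; everything else is a one-line estimate.
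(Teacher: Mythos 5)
Your proof is correct and follows the paper's route: the corollary is read off from Lemma~\ref{lem:Zn}, whose lower bound $Z_n(\phi) \ge \eta\, e^{-\beta_n} e^{P_{top}(\phi)n}$ together with $P_{top}(\phi) = P_G(\phi)$ immediately gives $\sum_n \lambda^{-n} Z_n(\phi) \ge \eta \sum_n e^{-\beta_n} = \infty$. Your preliminary step --- using subadditivity of $\beta_n$ and Fekete's lemma to deduce $\beta_n = o(n)$ from $\sum_n e^{-\beta_n} = \infty$ --- makes explicit an implication the paper leaves implicit in the discussion preceding Proposition~\ref{prop:rec}, and is a worthwhile clarification rather than a different argument.
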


\begin{proof}
Since $\phi$ is recurrent by definition if $\sum_n \lambda^{-n}
Z_n(\phi) = \infty$ for $\lambda = e^{P_G(\phi)}$, this corollary
is immediate from Lemma~\ref{lem:Zn}.
\end{proof}

The above ideas lead us to show that in our setting $P_{top}$ and $P_G$ are in fact the same.

\begin{corollary}\label{cor:PGhatphi}
If $\sup \phi - \inf \phi < \htop(f)$, then
$P_{top}(\hat\phi) = P_G(\hat\phi , \hat \c)$
for every cylinder set $\hat\c$ in $\hat I_{\mbox{\tiny trans}}$.
\end{corollary}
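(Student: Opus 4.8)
The plan is to rerun the argument of Lemma~\ref{lem:Zn} \emph{one level up}, for the lifted potential $\hat\phi=\phi\circ\pi$ on the topologically mixing countable Markov system $(\hat I_{\mbox{\tiny trans}},\hat f)$. Here the role of the cylinders of $\P_n$ is played by the $n$-cylinders of $\hat f$ inside $\hat I_{\mbox{\tiny trans}}$, and the role of the pre-level of a cylinder $J\in\P_m$ is played by $\plev(\hat\c_m):=\lev(D)$, where $D\in\D$ is the terminal domain of the $m$-path in $\hat I$ defining $\hat\c_m$. The hypotheses of that argument carry over: since the Hofbauer extension preserves topological entropy on its transitive part, $\sup\hat\phi-\inf\hat\phi=\sup\phi-\inf\phi<\htop(f)=\htop(\hat f|_{\hat I_{\mbox{\tiny trans}}})$, so \eqref{eq:bdd range} holds for $\hat\phi$; and for any two points in an $\hat f$-$n$-cylinder their $\pi$-images lie in a common element of $\P_n$, so $\beta_n(\hat\phi)\le\beta_n(\phi)=o(n)$ (recall that $V_n(\phi)\to 0$, which is the standing regularity assumption here, forces $\hat\phi$ to be continuous in the symbolic metric with $\beta_n(\hat\phi)=o(n)$, exactly as noted in the proof of Theorem~\ref{thm:exist for bdd range}).

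The easy inequality is as in Lemma~\ref{lem:Zn}: every $n$-cylinder of $\hat f$ contains at most one $\hat f^n$-fixed point, so $Z_n(\hat\phi,\hat\c)\le Z^{top}_n(\hat\phi):=\sum_{\hat\c_n}\sup_{\hat\c_n}e^{\hat\phi_n}$, and $\{Z^{top}_n(\hat\phi)\}_n$ is submultiplicative by the Markov property; hence $P_{top}(\hat\phi)=\lim_n\tfrac1n\log Z^{top}_n(\hat\phi)=\inf_n\tfrac1n\log Z^{top}_n(\hat\phi)<\infty$, and $P_G(\hat\phi,\hat\c)\le P_{top}(\hat\phi)$.

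For the reverse inequality I would fix $R$ large and split the $m$-cylinders of $\hat f$ into those of pre-level $\le R$ (``type~1'') and those of pre-level $>R$ (``type~2''), just as in Lemma~\ref{lem:Zn}. By topological mixing of $(\hat I_{\mbox{\tiny trans}},\hat f)$ there is a single $R'$ so that from any domain of level $\le R$ one can return, in exactly $R'$ steps and passing through $\hat\c$, so as to close the cylinder into an $(m+R')$-periodic point lying in $\hat\c$; this yields $Z^{\mbox{\tiny type 1}}_m(\hat\phi)\le e^{\beta_m(\hat\phi)}e^{-R'\inf\hat\phi}Z_{m+R'}(\hat\phi,\hat\c)$. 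For type~2 cylinders one invokes the two structural facts about the Hofbauer tower already behind \eqref{eq:R1}--\eqref{eq:R2}: from any domain there are at most two $R$-paths leaving $\hat I_R$, while the transitive part of $\hat I_R$ carries path-growth at exponential rate $\ge\htop(f)-\gamma$ (cf.\ \cite{Hpwise}, \cite[Sections 9.3--9.4]{BB}, Lemma~\ref{lem:trans_subgraph}); combined with $\sup\hat\phi-\inf\hat\phi-\htop(f)+\gamma+\tfrac{\log 2}{R}<0$, which holds for $R$ large by \eqref{eq:bdd range}, this gives $Z^{\mbox{\tiny type 2}}_m(\hat\phi)\le C\,Z^{\mbox{\tiny type 1}}_m(\hat\phi)$ with $C=C(R,R')$. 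Chaining submultiplicativity with these two bounds exactly as in the display closing the proof of Lemma~\ref{lem:Zn} produces $Z_n(\hat\phi,\hat\c)\ge\eta\,e^{-\beta_n(\hat\phi)}e^{P_{top}(\hat\phi)\,n}$ for some $\eta>0$; since $\beta_n(\hat\phi)=o(n)$, passing to exponential growth rates gives $P_G(\hat\phi,\hat\c)\ge P_{top}(\hat\phi)$, hence equality. As $P_{top}(\hat\phi)$ does not involve $\hat\c$, this also reproves cylinder-independence of $P_G(\hat\phi,\cdot)$ (alternatively one quotes Lemma~\ref{lem:small cyl}).

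The one genuine obstacle is transporting the ``top of the tower contributes no entropy'' mechanism — the interplay between ``at most two escaping $R$-paths per domain'' and ``the transitive part of $\hat I_R$ realizes all of $\htop(f)$'' — from the level of $(I,f)$ to the level of $\hat f$-cylinders, while simultaneously routing the closing-up path through the prescribed cylinder $\hat\c$; everything else is a line-by-line transcription of Lemma~\ref{lem:Zn}. The only minor point to verify along the way is $\beta_n(\hat\phi)=o(n)$, for which $V_n(\phi)\to 0$ is more than sufficient.
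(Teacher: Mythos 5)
Your overall strategy --- rerunning Lemma~\ref{lem:Zn} for $\hat\phi=\phi\circ\pi$ on $(\hat I_{\mbox{\tiny trans}},\hat f)$, defining pre-level via the terminal domain, and using the type-1/type-2 split --- is exactly the paper's intent: the paper's proof of this corollary is the single line ``This is the same proof as Lemma~\ref{lem:Zn} with $J \in \P_m$ replaced by $\hat J \in \hat\P_m \cap \hat \c$.'' Your checks that $\sup\hat\phi-\inf\hat\phi<\htop(f)$ and $\beta_n(\hat\phi)=o(n)$ carry over are also correct.

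There is, however, a genuine gap in your choice of index set. You define $Z^{top}_n(\hat\phi)=\sum_{\hat\c_n}\sup_{\hat\c_n}e^{\hat\phi_n}$ with the sum running over \emph{all} $n$-cylinders of $\hat f$ in $\hat I_{\mbox{\tiny trans}}$. But $\hat I_{\mbox{\tiny trans}}$ typically contains countably infinitely many domains, each of which starts at least one $n$-path, and $\hat\phi$ is bounded below (since $\phi$ is bounded under \eqref{eq:bdd range}), so each term is $\ge e^{n\inf\phi}$ and $Z^{top}_n(\hat\phi)=\infty$ for every $n$. Your ``$P_{top}(\hat\phi)$'' would then be $+\infty$, which cannot equal the finite $P_G(\hat\phi,\hat\c)$, and the final chain of inequalities collapses. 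A second, related problem is the constant $R'$: you want a single $R'$ closing up every type-1 $m$-cylinder into an $(m+R')$-periodic orbit passing through $\hat\c$, but the \emph{initial} domain of an unrestricted $m$-cylinder can have arbitrarily high level, so no uniform $R'$ exists. Both issues are cured precisely by the replacement the paper indicates: work with $\hat J \in \hat\P_m\cap\hat\c$. This collection is finite for each $m$ (each domain has finite out-degree and the initial block is pinned by $\hat\c$), so the topological partition function is finite; and the initial domain is fixed, hence of bounded level, so a single $R'$ suffices for closing up. With that restriction, the rest of your line-by-line transcription of Lemma~\ref{lem:Zn} goes through, and matches the paper's argument.
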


\begin{proof}
This is  the same proof as Lemma~\ref{lem:Zn}
with  $J \in \P_m$ replaced by $\hat J \in \hat\P_m \cap \hat \c$.
\end{proof}

\medskip
\noindent
Department of Mathematics\\
University of Surrey\\
Guildford, Surrey, GU2 7XH\\
UK\\
\texttt{h.bruin@surrey.ac.uk}\\
\texttt{http://www.maths.surrey.ac.uk/}
\\[0.5cm]
Department of Mathematics\\
University of Surrey\\
Guildford, Surrey, GU2 7XH\\
UK\footnote{
{\bf Current address:}\\
Departamento de Matem\'atica Pura\\
Faculdade de Ci\^encias da Universidade do Porto\\
Rua do Campo Alegre, 687\\
4169-007 Porto\\
Portugal\\
}\\
\texttt{mtodd@fc.up.pt}\\
\texttt{http://www.fc.up.pt/pessoas/mtodd/}

\end{document}